\newtheorem{theorem}{Theorem}[section]
\newtheorem{corollary}[theorem]{Corollary}
\newtheorem{lemma}[theorem]{Lemma}
\newtheorem{proposition}[theorem]{Proposition}
\newtheorem{definition}{Definition}
\numberwithin{equation}{section}
\newtheorem{problem}[theorem]{Problem}
\newtheorem*{claim}{Claim}
\newtheorem*{thma}{Almansi's Theorem}
\newenvironment{thmbis}[1]
  {%
   \addtocounter{theorem}{-1}%
   \begin{theorem}}
  {\end{theorem}}
\newcommand{\hyperg}[4]{\: _2\! F_1 \left( #1,\, #2;\, #3;\, #4 \right)}
\newcommand{\RePt}{\mathrm{Re}\,}
\newcommand{\ball}{\mathbb{B}}
\newcommand{\sphere}{\mathbb{S}}
\newcommand{\disk}{\mathbb{D}}
\newcommand{\bfI}{\mathbf{I}}
\newcommand{\bfL}{\mathbf{L}}
\newcommand{\bfP}{\mathbf{P}}
\newcommand{\bfM}{\mathbf{M}}
\newcommand{\bfR}{\mathbf{R}}
\newcommand{\PH}{\mathrm{PH}}
\newcommand{\bbC}{\mathbb{C}}
\newcommand{\bbR}{\mathbb{R}}
\newcommand{\bbN}{\mathbb{N}}
\newcommand{\bbS}{\mathbb{S}}
\begin{document}

\title[Weighted integrability of polyharmonic functions]{Weighted integrability of polyharmonic functions in the higher dimensional case}%

\thanks{The first author was supported by the National Natural Science Foundation of China grant 11971453. The second author acknowledges financial support from the Spanish Ministry of Economy and Competitiveness, through the Mar\'ia de Maeztu Programme for Units of Excellence in R\&D (MDM-2014-0445), Academy of Finland project no. 268009, and the grant MTM2017-83499-P (Ministerio de Educaci\'on y Ciencia).}

\author{Congwen Liu}
\email{cwliu@ustc.edu.cn}

\address{CAS Wu Wen-Tsun Key Laboratory of Mathematics,
School of Mathematical Sciences,
University of Science and Technology of China\\
Hefei, Anhui 230026,
People's Republic of China}

\author{Antti Per\"al\"a}
\email{perala.math@gmail.com}

\address{Department of Mathematical Sciences,
Chalmers University of Technology and the University of Gothenburg,
Gothenburg SE-412 96, Sweden}

\author{Jiajia Si}
\email{sijiajia@mail.ustc.edu.cn}

\address{School of Science, Hainan University, Haikou, Hainan 570228,
People's Republic of China.}

\keywords{Polyharmonic functions, Weighted integrability, Boundary behaviour, Cellular decomposition}%

\subjclass[2010]{31B30, 35J40}

\begin{abstract}
This paper is concerned with the $L^p$ integrability of $N$-harmonic functions with respect to the standard weights $(1-|x|^2)^{\alpha}$ on the unit ball $\mathbb{B}$ of $\mathbb{R}^n$, $n\geq 2$. More precisely, our goal is to determine the real (negative) parameters $\alpha$, for which $(1-|x|^2)^{\alpha/p} u(x) \in L^p(\mathbb{B})$ implies that $u\equiv 0$, whenever $u$ is a solution of the $N$-Laplace equation on $\mathbb{B}$. This question is motivated by the uniqueness considerations of the Dirichlet problem for the $N$-Laplacian $\Delta^N$.

Our study is inspired by a recent work of Borichev and Hedenmalm \cite{BH14}, where a complete answer to the above question in the case $n=2$ is given for the full scale $0<p<\infty$. When $n\geq 3$, we obtain an analogous characterization for $\frac{n-2}{n-1}\leq p<\infty$, and remark that the remaining case can be genuinely more difficult.
Also, we extend the remarkable cellular decomposition theorem of Borichev and Hedenmalm to all dimensions.
\end{abstract}

\maketitle

\section{Introduction}

A complex-valued function $u$ defined on a bounded domain $\Omega$ in the Euclidean
space $\bbR^n$ is polyharmonic of order $N$ (or $N$-harmonic) if $u$ is $2N$ times continuously differentiable and
\[
\Delta^{N} u(x) = 0 \quad \text{for all } x\in \Omega,
\]
where $\Delta^{N}$ is the $N$-th iterate of the Laplacian
\[
\Delta := \frac {\partial^2}{\partial x_1^2} + \cdots + \frac {\partial^2}{\partial x_n^2}.
\]
A polyharmonic function of order $1$ is just a harmonic function; for $N=2$, the term
biharmonic function, which is important in elasticity theory, is used.
There is a vast literature on polyharmonic functions, see \cite{ACL83} and \cite{GGS10} for basic references.

We denote by $\textsc{PH}_{N}(\Omega)$ the linear space of all $N$-harmonic functions on $\Omega$.
Also, we let $L_{\alpha}^{p}(\Omega)$ be the space of measurable functions $f:\Omega\to \bbC$ with
\[
\|f\|_{p,\alpha} := \int\limits_{\Omega} |f(x)|^p [\mathrm{dist}(x,\partial \Omega)]^{\alpha} dV(x) < \infty,
\]
where $dV$ is the Lebesgue measure on $\mathbb{R}^n$.
We put
\[
\textsc{PH}_{N,\alpha}^{p}(\Omega):=\textsc{PH}_{N}(\Omega)\cap L_{\alpha}^{p}(\Omega),
\]
and endow it with the norm or quasi-norm structure of $L_{\alpha}^{p}(\Omega)$. This is obviously the subspace of
$L_{\alpha}^{p}(\Omega)$ consisting of $N$-harmonic functions.

In their remarkable paper \cite{BH14}, Borichev and Hedenmalm raised the following question.
\begin{problem}\label{prob:1.1}
For which triples $(N,p,\alpha)$ do we have that $\PH_{N,\alpha}^{p}(\Omega) = \{0\}$?
\end{problem}

The interesting case is when $\alpha$ is negative. Then the integrability asks for
the function to decay in mean at some rate along the boundary. This is closely related to
the uniqueness issues associated with the Dirichlet problem
for the $N$-Laplacian equation
\begin{equation}\label{eqn:Diri}
\begin{cases}
\Delta^N u = 0 \quad \text{in }\Omega,\\
\partial_{\mathrm{n}}^j u = f_j \quad \text{on } \partial \Omega \text{ for } j=0,1,\ldots, N-1,
\end{cases}
\end{equation}
where $\partial_{\mathrm{n}}$ stands for the (interior) normal derivative.
See \cite[Subsection 1.3]{BH14} for a detailed background.

There clearly exists a critical number $\beta(N,p)$ such that
\[
\PH_{N,\alpha}^{p}(\Omega) = \{0\} \quad \text{for} \quad \alpha < \beta(N,p)\\
\]
and
\[
\PH_{N,\alpha}^{p}(\Omega) \neq \{0\} \quad \text{for} \quad \alpha > \beta(N,p).
\]
In fact, $\beta(N,p)$ can be given explicitly by
\[
\beta(N,p):= \inf\{ \beta_p(u): u\in \mathrm{PH}_N (\Omega)\setminus \{0\} \},
\]
where for a Borel measurable function $u:\Omega\to \bbC$,
\[
\beta_p(u) ~:=~ \inf \{ \alpha\in \bbR: u\in L_{\alpha}^p(\Omega) \}.
\]
If $u\not\in L_{\alpha}^p(\Omega)$ for every $\alpha \in \bbR$, we write $\beta_p(u) := +\infty$.
Following \cite{BH14}, we call the function $p\mapsto \beta(N,p)$ the critical integrability type curve for the
$N$-harmonic functions, and the function $(N,p)\mapsto \beta(N,p)$ the critical integrability type
curves for the polyharmonic functions.

When $n=2$ and $\Omega$ is the unit disk $\disk$ in the plane, Borichev and Hedenmalm \cite{BH14} completely resolved
Problem \ref{prob:1.1} by giving an explicit formula for $\beta(N,p)$, the critical integrability type
curves for the polyharmonic functions. To avoid repetition, we do not include the detailed results here.

The aim of this paper is to extend the main results of \cite{BH14} to all
dimensions. Let $\ball$ stand for the open unit ball of $\bbR^n$. Also, we write $\sphere$ for the unit sphere, the boundary of $\ball$. By $d\sigma$, we mean the $(n-1)$-dimensional surface measure on $\sphere$, normalized so that $\sigma(\sphere)=1$. We investigate the Problem \ref{prob:1.1} when $\Omega=\ball$,
for $n\geq 2$.
Our first main result is the following:
\begin{theorem}\label{thm:main2}
The critical integrability type curve for the polyharmonic
functions on $\ball$ is given by
\begin{equation}\label{eqn:crit-type}
\beta(N,p) = \min_{j:0\leq j\leq N} b_{j,N}(p)
\end{equation}
for $N\in \bbN$ and $\frac{n-2}{n-1}\leq p <\infty$, where
\begin{align}
b_{0,N}(p) ~:=~& -1-(N-1)p, \\
b_{j,N}(p) ~:=~& \max\{-1-(N+j-1)p, -n-(N-j-n+1)p\} \label{eqn:bjNp}
\end{align}
for $j = 1,\ldots,N$. In particular, when $n\geq 3$,
\begin{equation}\label{eqn:crit-type2}
\beta(N,p) ~=~ \begin{cases}
-1-Np, & \text{if }\,  \frac{n-2}{n-1} \leq p < \frac{n-1}{n}, \\
-n-(N-n)p, & \text{if }\,  \frac{n-1}{n} \leq  p < 1, \\
-1-(N-1)p, & \text{if }\,  p\geq 1.
\end{cases}
\end{equation}
Here and throughout this paper, when $n=2$, the expression $\frac{n-2}{n-1}\leq p <\infty$ should be interpreted as $0<p<\infty$.
\end{theorem}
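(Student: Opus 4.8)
The plan is to prove $\beta(N,p)=\min_{0\le j\le N}b_{j,N}(p)$ by establishing two matching bounds: an upper bound obtained from explicit extremal $N$-harmonic functions, and a lower bound asserting that no nonzero element of $\PH_N(\ball)$ can be strictly more integrable than these extremals. Throughout I would work with the spherical-harmonic expansion of $u\in\PH_N(\ball)$, writing $u(r\xi)=\sum_{\ell\ge0}c_\ell(r)Y_\ell(\xi)$; the equation $\Delta^N u=0$ forces each radial profile to be a polynomial $c_\ell(r)=\sum_{k=0}^{N-1}a_{\ell,k}r^{\ell+2k}$ (the Almansi/Gauss structure), so that the only mechanisms producing boundary decay are cancellation inside a single profile and summation over $\ell$. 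The exponents $b_{j,N}(p)$ should be read as the critical integrability types of the model boundary singularities $(1-|x|^2)^{N+j-1}|x-\zeta|^{-(n+2j-2)}$ at a fixed $\zeta\in\sphere$, with $j=0$ degenerating to the spread-out profile $(1-|x|^2)^{N-1}$.

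For the upper bound I would produce one extremal per index. The case $j=0$ is settled by $u_0(x)=(1-|x|^2)^{N-1}$, which is a polynomial of degree $2N-2$ and hence $N$-harmonic, and which satisfies $\beta_p(u_0)=-1-(N-1)p=b_{0,N}(p)$ since it carries no tangential concentration. For $1\le j\le N$ I would construct genuine $N$-harmonic kernels $u_j(\cdot,\zeta)$ — the polyharmonic analogues of the Poisson kernel, expressible as finite combinations of $(1-|x|^2)^i|x-\zeta|^{-(n+2i-2)}$ or as zonal series with hypergeometric radial profiles $\hyperg{a}{b}{c}{r^2}$ — whose boundary behaviour near $\zeta$ is $(1-|x|^2)^{N+j-1}|x-\zeta|^{-(n+2j-2)}$. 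The value $\beta_p(u_j)$ is then read off from a Forelli--Rudin type estimate: in boundary coordinates $t=1-|x|^2$ and tangential $y\in\RR^{n-1}$ one has $|x-\zeta|^2\asymp t^2+|y|^2$, and integrating in $y$ first yields two regimes according as $(n+2j-2)p\gtrless n-1$, giving exactly $\beta_p(u_j)=\max\{-1-(N+j-1)p,\,-n-(N-j-n+1)p\}=b_{j,N}(p)$. Taking the infimum over $j$ produces $\beta(N,p)\le\min_j b_{j,N}(p)$.

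The lower bound is the crux, and I expect it to be the main obstacle, precisely because for $0<p<1$ the quasi-norm $\|\cdot\|_{p,\alpha}$ is not governed by orthogonality and $|u|^p$ fails to be subharmonic for polyharmonic $u$, so the component-wise Parseval argument available at $p=2$ is unavailable. Here I would invoke the cellular decomposition, extended to $\ball\subset\RR^n$: tile a boundary neighbourhood by Whitney cells on which $1-|x|^2$ is essentially constant, establish pointwise control of $u$ and its derivatives by $L^p$-averages over dilated cells (a property that for $p<1$ is not a consequence of subharmonicity and constitutes the technical heart of the decomposition), and thereby convert $\int_\ball|u|^p(1-|x|^2)^\alpha\,dV$ into a summable discrete model. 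One then argues that $\alpha<\min_j b_{j,N}(p)$ forces the $N$ generalized boundary data of $u$ to vanish, whence $u\equiv0$ by uniqueness for the Dirichlet problem \eqref{eqn:Diri}. The restriction $\frac{n-2}{n-1}\le p$ enters at exactly this step: it is the range in which the cellular estimates can be closed, and below it the control of polyharmonic functions by their cell averages degrades, which is the source of the genuine difficulty flagged for the remaining values of $p$.

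Finally, to pass from \eqref{eqn:crit-type} to the explicit form \eqref{eqn:crit-type2} when $n\ge3$, I would minimize $j\mapsto b_{j,N}(p)$ directly. The first entry $-1-(N+j-1)p$ decreases in $j$ while the second entry $-n-(N-j-n+1)p$ increases in $j$, and a short computation shows that for $p\ge\frac{n-2}{n-1}$ the minimum is always attained at $j\in\{0,1\}$. It then remains to compare $b_{0,N}(p)=-1-(N-1)p$ with $b_{1,N}(p)=\max\{-1-Np,\,-n-(N-n)p\}$: within $b_{1,N}$ the two entries switch at $p=\frac{n-1}{n}$, and $b_{1,N}$ undercuts $b_{0,N}$ precisely for $p<1$ (their difference being $(n-1)(1-p)$), which yields the three stated cases across the breakpoints $p=\frac{n-1}{n}$ and $p=1$.
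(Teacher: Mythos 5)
Your upper-bound half is essentially the paper's: the extremals $U_{0,N}=(1-|x|^2)^{N-1}$ and $U_{j,N}=(1-|x|^2)^{N+j-1}|x-e_1|^{-(n+2j-2)}$ are exactly the test functions used (the paper verifies $N$-harmonicity via $U_{j,N}=\bfM^{N-j}[P_{j-1}(\cdot,e_1)]$ and computes the integrals exactly with a hypergeometric evaluation rather than your Forelli--Rudin asymptotics, but the outcome is the same), and your reduction of \eqref{eqn:crit-type} to \eqref{eqn:crit-type2} is a correct computation. The genuine gap is in the lower bound, i.e.\ the implication $\alpha\leq\min_j b_{j,N}(p)\Rightarrow \PH_{N,\alpha}^{p}(\ball)=\{0\}$. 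First, the ``cellular decomposition'' you invoke is not a Whitney tiling of $\ball$ into geometric cells: in this paper (and in Borichev--Hedenmalm) it is the algebraic splitting $u=w_0+\bfM[w_1]+\cdots+\bfM^{N-1}[w_{N-1}]$ with $\bfL_{N-j-1}[w_j]=0$ (Theorem \ref{thm:main1}), and the proof of triviality needs not only the existence of this splitting but its \emph{uniqueness}, which is what ultimately forces $w=0$ in Proposition \ref{lem:prop71inBH}. Second, your endgame --- ``the $N$ generalized boundary data of $u$ vanish, whence $u\equiv0$ by uniqueness for the Dirichlet problem \eqref{eqn:Diri}'' --- is circular: for the negative $\alpha$ at issue an element of $\PH_{N,\alpha}^{p}(\ball)$ need not possess boundary data in any classical sense, and uniqueness of the Dirichlet problem under a weighted $L^p$ decay hypothesis is precisely the theorem being proved. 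You give no mechanism for either step.

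You also misplace the source of the restriction $p\geq\frac{n-2}{n-1}$. Pointwise control of polyharmonic functions by $L^p$-averages over balls holds for \emph{all} $0<p<\infty$ (Pavlovi\'c's lemma, Lemma \ref{lem:Pav97-0}), so that is not where the hypothesis enters. In the paper it enters through the Stein--Weiss theorem that $|\nabla u|^p$ is subharmonic for harmonic $u$ when $p\geq\frac{n-2}{n-1}$: this makes $r\mapsto\int_{\sphere}|\nabla u(r\zeta)|^p\,d\sigma(\zeta)$ increasing, so $\|\nabla u\|_{p,-1}<\infty$ forces $\nabla u=0$, which is the base case of Proposition \ref{prop:prop412inBH}. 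The other pillar you are missing is Proposition \ref{prop:prop411inBH}: for $\alpha\leq\min\{(n-1)p-n,-1\}$ one shows $\liminf_{r\to1^-}\int_{\sphere}|u(r\zeta)|\,d\sigma(\zeta)=0$, feeds this into the Poisson integral applied to the rearranged Almansi representation \eqref{eqn:almansi} to kill the leading harmonic component $v_0$, and thereby peels off a factor of $(1-|x|^2)$. Iterating these two propositions term by term in the cellular decomposition, and then invoking its uniqueness, is the actual argument; your sketch replaces all of this with an appeal to a uniqueness statement that is not available at the outset.
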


The requirement $p\geq \frac{n-2}{n-1}$ stems from the subharmonicity of the gradient (see \cite{SW60}), which is a non-issue when $n=2$. It is natural to expect that the formula \eqref{eqn:crit-type} in Theorem \ref{thm:main2} is true for the
full range of $p$: $0<p<\infty$. Unfortunately, this is not the case if $n\geq 3$. See Section 8 for an explanation of why.

A novel decomposition theorem of polyharmonic functions on the unit disk,
referred as to the cellular decomposition theorem, played a crucial role in the analysis performed in
\cite{BH14}. It is closely related to the following

\begin{thma}
If $u$ is polyharmonic of order $N$ on $\ball$, then
there exist unique harmonic functions $u_0,\ldots,u_{N-1}$ on $\ball$ such that
\begin{equation}\label{eqn:almansiexpansion}
u(x) = u_{0}(x) + |x|^2 u_{1}(x) + \cdots + |x|^{2N-2} u_{N-1}(x), \quad x\in \ball.
\end{equation}
\end{thma}

Almansi's theorem plays a key role in the theory of polyharmonic functions.
See \cite{ACL83, HK93, Kou92, Nic35}, to mention only a few. However, it is not good enough for the
analysis of the weighted integrability of polyharmonic functions, for the individual terms in the
alternative Almansi expansion (which follows by rearranging the terms in \eqref{eqn:almansiexpansion},
see \cite[p.500, (2)]{Pav97})
\begin{equation}\label{eqn:yaalmansi}
u(x) = v_{0}(x) + (1-|x|^2)v_{1}(x) + \cdots + (1-|x|^2)^{N-1} v_{N-1}(x),
\end{equation}
where all $v_j$ are harmonic, do not necessarily have the same decay rates near the boundary.
See Subsection 2.5 and Section 3 in \cite{BH14} for the detailed explanations.

To remedy this situation, Borichev and Hedenmalm \cite{BH14} established
a modified Almansi representation (\cite[Theorem 3.4]{BH14}):
\begin{equation} \label{eqn:celluardecomp0}
u=w_{0}+ M [w_{1}]+ \cdots +M^{N-1}[w_{N-1}],
\end{equation}
where $M[u](z) := (1 - |z|^2) u(z)$ and each function $w_j$ solves
the differential equation $L_{N-j-1}[w_{j}]=0$, with
\begin{equation}\label{eqn:Ltheta0}
L_{\theta} [u](z) ~:=~ (1-|z|^2) \Delta u(z) + 4 \theta\, [z \partial_z u(z) + \bar{z} \bar{\partial}_z u(z)]
- 4\theta^2 u(z).
\end{equation}
It is a crucial feature of this expansion that each term in \eqref{eqn:celluardecomp0} remains in the
space $\mathrm{PH}_{N,\alpha}^{p}(\disk)$, whenever $u \in \mathrm{PH}_{N,\alpha}^{p}(\disk)$.
This means that one may analyze each term separately.




We shall extend the cellular decomposition theorem in \cite{BH14} to higher dimensions.
To this end, the first and key step is to find the higher dimensional analogue
of the differential operator $L_{\theta}$.
This turns out to be
\begin{equation}\label{eqn:Ltheta}
\bfL_{\theta} [u](x) ~:=~ (1-|x|^2) \Delta u(x) + 4 \theta\, \bfR [u](x) + 2\theta (n-2-2\theta) u(x),
\end{equation}
where $\theta$ is a real parameter and $\bfR [u] (x) := x \cdot \nabla f(x)$ is the radial derivative of $u$.
It has appeared implicitly in \cite{Leu87,LP04,LP09} and is closely related to the theory of axially
symmetric potentials developed by Weinstein (see, e.g., \cite{Wei53}).
As we will see later, this differential operator enjoys exactly the same properties
as those for $L_{\theta}$, especially the operator identities \eqref{eqn:commu1} and
\eqref{eqn:commu3}, which enable us to establish the following

\begin{theorem}[The modified Almansi representation]\label{thm:mdfalmansi}
For every polyharmonic function $u$ of order $N$ on $\ball$, there exist unique
functions $w_0, \ldots, w_{N-1}$, satisfying $\bfL_{N-j-1}[w_{j}]=0$ for $j=0,\ldots, N-1$, such that
\begin{equation}\label{eqn:modifiedalmansi}
u(x) = w_{0}(x) + (1-|x|^2) w_{1}(x) + \cdots + (1-|x|^2)^{N-1} w_{N-1}(x), \quad x\in \ball.
\end{equation}
\end{theorem}
Roughly speaking, the alternative Almansi representation \eqref{eqn:yaalmansi} states that an $N$-harmonic function on $\ball$ can be represented
as a polynomial of degree $N-1$ in $1-|x|^2$ with all the coefficient functions $v_j$ harmonic on $\ball$.
Our Theorem \ref{thm:mdfalmansi} represents an $N$-harmonic function by a similar formula,
but each coefficient $w_j$ here solves its own partial differential equation
$\bfL_{N-j-1}[w_{j}]=0$.

Furthermore, we have the following

\begin{theorem}[The cellular decomposition theorem]\label{thm:cellular}
Let $0<p<\infty$, $N\in \bbN$ and $\alpha\in \bbR$. Then every
$u\in\mathrm{PH}_{N,\alpha}^{p}(\ball)$ has a unique decomposition
\begin{equation} \label{eqn:celluardecomp}
u = \sum_{j=0}^{N-1} \bfM^j [w_{j}],
\end{equation}
with each term $\bfM^{j}[w_{j}]$ being in $\mathrm{PH}_{N,\alpha}^{p}(\ball)$, where the function $w_{j}$
is $(N-j)$-harmonic and solves  $\bfL_{N-j-1}[w_{j}]=0$ on $\ball$.
Here, for notational convenience, we write
\[
\bfM^j [w](x) := (1-|x|^2)^j w(x), \quad x\in \ball.
\]
\end{theorem}

Theorem \ref{thm:cellular} can be improved by specifying which terms in the decomposition \eqref{eqn:celluardecomp}
must necessarily vanish.

Following \cite{BH14}, we denote by $\mathcal{A}_N$ the open set
\begin{equation}
\mathcal{A}_N ~:=~ \left\{ (p,\alpha)\in \bbR^2: 0<p<+\infty \text{ and } \alpha> \beta(N,p)\right\}
\end{equation}
for fixed $N\geq 2$, and refer to it as the admissible region. So the definition of $\beta(N,p)$
is equivalent to the statement
\[
(p,\alpha) \in \mathcal{A}_N \quad \Longleftrightarrow \quad \PH_{N,\alpha}^p (\ball) \neq \{0\}.
\]
Denote by $\widetilde{\mathcal{A}}_N$ the subset of $\mathcal{A}_N$:
\begin{equation}
\widetilde{\mathcal{A}}_N := \left\{ (p,\alpha)\in \bbR^2: \tfrac {n-2}{n-1}\leq p<+\infty \text{ and }
\alpha> \min_{j:0\leq j \leq N} b_{j,N}(p)\right\}.
\end{equation}
For a point $(p,\alpha)\in \mathcal{A}_N$, we put
\[
J(p,\alpha) := \{ j \in \{0,\ldots,N-1\} : \alpha > a_{N-j,N}(p)\},
\]
where
\begin{equation}
a_{j,N}(p) ~:=~ \min\{b_{j,N}(p),-1-(N-j)p\}
\end{equation}
for $N\in \mathbb{N}$ and $j\in \{1,\cdots,N\}$.

\begin{theorem}\label{thm:main3}
Suppose $(p,\alpha)\in \widetilde{\mathcal{A}}_N$.
Then every $u\in\mathrm{PH}_{N,\alpha}^{p}(\ball)$ has a unique decomposition
\begin{equation}
u = \sum_{j\in J(p,\alpha)} \bfM^{j}[w_{j}],
\end{equation}
where each term $\bfM^{j}[w_{j}]$ is in $\mathrm{PH}_{N,\alpha}^{p}(\ball)$, while the functions $w_{j}$ are $(N-j)$-harmonic and solve  $\bfL_{N-j-1}[w_{j}]=0$ on $\ball$, for $j\in J(p,\alpha)$.
\end{theorem}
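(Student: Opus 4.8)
The plan is to derive Theorem~\ref{thm:main3} from the Structure Theorem (Theorem~\ref{thm:main1}) by pinning down the exact critical integrability type of each individual cell $\bfM^{j}[w_{j}]$ and discarding those cells that are forced to vanish. First I would invoke Theorem~\ref{thm:main1} to write, uniquely, $u=\sum_{j=0}^{N-1}\bfM^{j}[w_{j}]$ with $\bfM^{j}[w_{j}]\in\mathrm{PH}_{N,\alpha}^{p}(\ball)$ and $w_{j}$ being $(N-j)$-harmonic with $\bfL_{N-j-1}[w_{j}]=0$. Recalling that $\bfM$ is multiplication by $1-|x|^{2}$ and that $\mathrm{dist}(x,\partial\ball)=1-|x|\asymp 1-|x|^{2}$, we have
\[
\|\bfM^{j}[w_{j}]\|_{p,\alpha}^{p}\asymp\int_{\ball}|w_{j}(x)|^{p}\,(1-|x|^{2})^{\alpha+jp}\,dV(x)\asymp\|w_{j}\|_{p,\alpha+jp}^{p}.
\]
Thus the theorem reduces to a single claim about cells: for $1\le k\le N$ (corresponding to $k=N-j$), if $w$ is $k$-harmonic with $\bfL_{k-1}[w]=0$ and $\bfM^{N-k}[w]\in L^{p}_{\alpha}(\ball)$ with $\alpha\le a_{k,N}(p)$, then $w\equiv 0$. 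Granting this, for $j\notin J(p,\alpha)$ we have $\alpha\le a_{N-j,N}(p)$, so the corresponding cell vanishes; the surviving sum runs over $J(p,\alpha)$, and its uniqueness is inherited verbatim from Theorem~\ref{thm:main1}.

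To prove the cell claim I would separate variables. Writing $x=r\xi$ with $r=|x|$, $\xi\in\sphere$, and expanding $w(r\xi)=\sum_{\ell\ge 0}\sum_{m}g_{\ell,m}(r)\,Y_{\ell,m}(\xi)$ in an orthonormal basis of spherical harmonics, the identities $\Delta(gY_{\ell})=\big(g''+\tfrac{n-1}{r}g'-\tfrac{\ell(\ell+n-2)}{r^{2}}g\big)Y_{\ell}$ and $\bfR[gY_{\ell}]=rg'\,Y_{\ell}$ turn both $\Delta^{k}w=0$ and $\bfL_{k-1}[w]=0$ into explicit linear ODEs in $r$ for each degree $\ell$; since $\bfL_{k-1}$ is rotation-equivariant, each modal profile $g_{\ell,m}$ separately solves $\bfL_{k-1}[g_{\ell,m}Y_{\ell,m}]=0$. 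The solutions regular at the origin form a finite-dimensional space expressible through the Gauss hypergeometric function $\hyperg{\cdot}{\cdot}{\cdot}{r^{2}}$, and the decisive quantitative input is their behaviour at the regular singular point $r=1$. Computing the indicial equation of $\bfL_{k-1}$ there (with $t=1-r$, so $1-r^{2}\asymp 2t$) gives $2\rho(\rho-1-2\theta)=0$ with $\theta=k-1$, hence the two characteristic exponents $\rho=0$ and $\rho=2k-1$; so each admissible radial profile is, up to smoother terms, a combination of a profile bounded up to the boundary and one of order $(1-r^{2})^{2k-1}$.

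With these asymptotics in hand I would compute $\|\bfM^{N-k}[w]\|_{p,\alpha}$ by splitting into a radial integral $\int_{0}^{1}|g_{\ell,m}(r)|^{p}(1-r^{2})^{\alpha+(N-k)p}r^{n-1}\,dr$ and the angular factor $\|Y_{\ell,m}\|_{L^{p}(\sphere)}^{p}$. Two families of extremisers compete. The solutions bounded up to $\sphere$ (the $\rho=0$ branch) give a cell of critical type $-1-(N-k)p$. The boundary-concentrated solutions---Poisson/Weinstein-type profiles built from the decaying exponent $\rho=2k-1$ by superposing high-degree harmonics so as to concentrate at a single boundary point $\zeta$---are weighted-integrable only when both a radial and a tangential condition hold, namely $\alpha>-1-(N+k-1)p$ (decay of order $(1-r^{2})^{N+k-1}$ in generic directions) and $\alpha>-n-(N-k-n+1)p$ (the $(n-1)$-dimensional tangential concentration at $\zeta$); jointly these give critical type $b_{k,N}(p)=\max\{-1-(N+k-1)p,\,-n-(N-k-n+1)p\}$. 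The infimum over all nonzero cells is therefore $\min\{b_{k,N}(p),\,-1-(N-k)p\}=a_{k,N}(p)$, and the asymptotics show the critical weighted integral diverges at $\alpha=a_{k,N}(p)$, so the value is not attained. This establishes the cell claim.

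The main obstacle is the sharp, uniform lower bound underlying the cell claim: one must control the hypergeometric radial profiles $g_{\ell,m}$ and their boundary asymptotics uniformly in the degree $\ell$, and then show that no superposition of modes can lie in $L^{p}_{\alpha}(\ball)$ for $\alpha\le a_{k,N}(p)$ except the trivial one. Because for $p\neq 2$ the angular $L^{p}(\sphere)$-norms of $Y_{\ell,m}$ grow with $\ell$, the argument cannot treat the radial and angular variables independently; it is precisely their joint behaviour that selects between the two branches of the $\max$ defining $b_{k,N}(p)$, and this is where the hypothesis $p\ge\frac{n-2}{n-1}$ (subharmonicity of the gradient) is needed to pass from $w$ to its modal components in $L^{p}$. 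This restriction is also the reason the method stops at $p=\frac{n-2}{n-1}$.
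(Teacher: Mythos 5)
Your reduction of the theorem to the single ``cell claim'' is exactly the paper's strategy: Theorem \ref{thm:main3} is obtained from Theorem \ref{thm:main1} by showing that the cell $\bfM^{j}[w_{j}]$ must vanish when $\alpha\leq a_{N-j,N}(p)$ (Proposition \ref{lem:prop71inBH}) and is allowed to be nontrivial otherwise (Proposition \ref{prop:Prop7.2inBH}). The existence half of your argument is also essentially the paper's: the two competing families you describe are precisely the test functions $\bfM^{N-k}[\Phi_{k-1}]$ and $U_{k,N}=\bfM^{N-k}[P_{k-1}(\cdot,e_1)]$, whose integrability types are computed in Corollary \ref{cor:PhiinPH} and Lemma \ref{lem:testtriviality}.

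The genuine gap is in your proof of the vanishing direction of the cell claim, which is the only part of the theorem that actually requires work. You propose to expand $w$ in spherical harmonics, determine the indicial exponents $0$ and $2k-1$ of $\bfL_{k-1}$ at $r=1$ (these are correct), and then argue that ``no superposition of modes can lie in $L^{p}_{\alpha}(\ball)$ for $\alpha\leq a_{k,N}(p)$ except the trivial one.'' No mechanism is offered for this last step, and it is exactly where the difficulty lies: for $p\neq 2$ there is no orthogonality, so membership of $w$ in $L^{p}_{\alpha}$ imposes no usable condition on the individual profiles $g_{\ell,m}$, and the connection coefficients between the two hypergeometric branches grow with $\ell$, so the ``sharp uniform lower bound'' you defer to is precisely the unproved content. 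Your explanation of where $p\geq\frac{n-2}{n-1}$ enters is also off the mark: it has nothing to do with passing to modal components; in the paper it is used only through the Stein--Weiss subharmonicity of $|\nabla u|^{p}$, which makes $r\mapsto\int_{\sphere}|\nabla u(r\zeta)|^{p}\,d\sigma$ increasing and settles the base case $N'=1$ for small $p$ (Proposition \ref{prop:prop412inBH}). The paper avoids the modal analysis altogether by a soft argument (Proposition \ref{prop:prop411inBH}): the hypothesis $\alpha\leq\min\{(n-1)p-n,-1\}$ forces $\liminf_{r\to1^-}\int_{\sphere}|u(r\zeta)|\,d\sigma=0$, the Poisson representation then kills the leading term $v_{0}$ of the Almansi expansion, so $w=\bfM[\widetilde w]$; feeding this back into the \emph{uniqueness} of the cellular decomposition forces $w=0$. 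You would need either to supply the uniform hypergeometric estimates and an $L^{p}$ superposition argument (a substantial undertaking, not sketched), or to replace this part of your plan by an argument of the Almansi/Poisson type.
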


Note that each term $\bfM^{j}[w_{j}]$ with $j\in J(p,\alpha)$ is allowed to be nontrivial, so the above result is sharp.

We follow the strategy of \cite{BH14} whenever applicable. There are some notable differences,
such as the lack of powerful tools from complex analysis that only work in the plane. In addition,
instead of defining $N$-harmonicity in the sense of distribution theory, we can use the more
elementary standard definition (but our results remain valid in the former case).
This is the case, because we use the simpler test functions in Lemma \ref{lem:Phi_theta},
without resorting to method of Olofsson \cite{Olo14}.

The rest of the paper is organized as follows:
Section 2 is devoted to the basic properties of the differential operator $\bfL_{\theta}$.
Our main results, Theorems \ref{thm:mdfalmansi}, \ref{thm:cellular} and \ref{thm:main2} will be proved
in Section 3, Section 4 and Sections 5-7, respectively.
Theorem \ref{thm:main3} is then proved in Section 8.
The last Section 9 is devoted to concluding remarks and open problems.

\subsubsection*{Acknowledgement}
This work began while the first author was visiting the Department of Mathematics and Statistics,
University of Helsinki. He wishes to express his
gratitude for the warm hospitality he received there, especially from Professors
Mats Gyllenberg, Tuomas Hyt\"onen, Pertti Mattila, Jari Taskinen and
Hans-Olav Tylli.
We are also grateful to Professors Kehe Zhu and Guangbin Ren for many helpful discussions and comments.
Finally, we would like to thank the referee for his/her constructive suggestions and comments.
In particular, we were advised to develop the modified Almansi representation for an
arbitrary polyharmonic function, which led to Theorem 1.3.

\section{The differential operator $\bfL_{\theta}$}

\subsection{Some elementary identities}

Let $\lambda$ be a real number.  We define the multiplication operator $\bfM^{\lambda}$ by
\[
\bfM^{\lambda} [u] (x) := (1-|x|^2)^{\lambda} u(x),\quad x\in \ball,
\]
and in particular, $\bfM:=\bfM^{1}$. We also write $\bfM^{0}:=\bfI$.

The following proposition is called the correspondence principle.

\begin{proposition}\label{prop:corrprinciple}
For any $\theta, \lambda\in \bbR$, we have
\begin{equation}\label{eqn:corrp}
\bfL_{\theta} \bfM^{\lambda} = \bfM^{\lambda} \bfL_{\theta-\lambda} + 4\lambda(\lambda-1-2\theta) \bfM^{\lambda-1}.
\end{equation}
\end{proposition}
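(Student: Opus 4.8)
The plan is to verify \eqref{eqn:corrp} by a direct computation: apply both sides to an arbitrary (sufficiently smooth) function $u$ and compare the results term by term. Throughout I write $\rho := 1-|x|^2$, so that $\bfM^{\lambda}[u] = \rho^{\lambda}u$, and I record the elementary identities $\nabla \rho = -2x$ and $\Delta \rho = -2n$, which drive everything.

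First I would assemble the three ingredients of $\bfL_{\theta}$ applied to $\rho^{\lambda}u$. Using the Leibniz rule $\Delta(fg) = (\Delta f)g + 2\nabla f\cdot\nabla g + f\Delta g$ with $f = \rho^{\lambda}$, and computing $\Delta(\rho^{\lambda})$ and the cross term directly from $\nabla\rho = -2x$, one finds
\[
\Delta(\rho^{\lambda}u) = \bigl(4\lambda(\lambda-1)\rho^{\lambda-2}|x|^2 - 2\lambda n\,\rho^{\lambda-1}\bigr)u - 4\lambda\rho^{\lambda-1}\bfR[u] + \rho^{\lambda}\Delta u.
\]
Likewise the radial derivative gives
\[
\bfR[\rho^{\lambda}u] = x\cdot\nabla(\rho^{\lambda}u) = -2\lambda\rho^{\lambda-1}|x|^2 u + \rho^{\lambda}\bfR[u].
\]
Substituting these into the definition \eqref{eqn:Ltheta}, namely $\bfL_{\theta}[\rho^{\lambda}u] = \rho\,\Delta(\rho^{\lambda}u) + 4\theta\,\bfR[\rho^{\lambda}u] + 2\theta(n-2-2\theta)\rho^{\lambda}u$, yields an expression that I would then sort into three groups according to whether they involve $\Delta u$, $\bfR[u]$, or $u$ alone.

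The second- and first-order parts match the claimed right-hand side at once: the $\Delta u$ group is $\rho^{\lambda+1}\Delta u$, and the $\bfR[u]$ group collapses to $4(\theta-\lambda)\rho^{\lambda}\bfR[u]$, which are exactly the corresponding terms of $\bfM^{\lambda}\bfL_{\theta-\lambda}[u]$ (note the shift $\theta\mapsto\theta-\lambda$ in the first-order coefficient). The only real bookkeeping is the zeroth-order part, and here the key manoeuvre is to eliminate the awkward factors $\rho^{\lambda-1}|x|^2$ by writing $|x|^2 = 1-\rho$, splitting each such term into a $\rho^{\lambda-1}$ piece and a $\rho^{\lambda}$ piece. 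Collecting the $\rho^{\lambda-1}$ pieces produces precisely $4\lambda(\lambda-1-2\theta)\rho^{\lambda-1}u = 4\lambda(\lambda-1-2\theta)\bfM^{\lambda-1}[u]$, the correction term, while the $\rho^{\lambda}$ pieces must reproduce the zeroth-order coefficient $2(\theta-\lambda)\bigl(n-2-2(\theta-\lambda)\bigr)$ of $\bfM^{\lambda}\bfL_{\theta-\lambda}[u]$. Verifying this last matching is the main (though routine) obstacle; I expect it to reduce to the polynomial identity
\[
-4\lambda(\lambda-1) - 2\lambda n + 8\theta\lambda + 2\theta(n-2-2\theta) = 2(\theta-\lambda)\bigl(n-2-2\theta+2\lambda\bigr),
\]
which one confirms by expanding both sides as quadratics in $\theta$ and $\lambda$. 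Since $u$ was arbitrary, this establishes the operator identity \eqref{eqn:corrp}.
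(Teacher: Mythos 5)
Your computation is correct: the formulas for $\Delta(\rho^{\lambda}u)$ and $\bfR[\rho^{\lambda}u]$ are right, the substitution $|x|^2=1-\rho$ isolates exactly the coefficient $4\lambda(\lambda-1-2\theta)$ on $\rho^{\lambda-1}u$, and the closing polynomial identity checks out. This is essentially the paper's own proof (the paper records the same two identities as operator compositions $\Delta\bfM^{\lambda}$ and $\bfR\bfM^{\lambda}$ and then collects terms in the same way), so nothing further is needed.
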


\begin{proof}
We first compute
\begin{align*}
\Delta \left\{\left(1-|x|^2\right)^{\lambda}  u(x)\right\} ~=~& (1-|x|^2)^{\lambda} \Delta u(x) + 2 \nabla \left\{(1-|x|^2)^{\lambda}\right\}\cdot \nabla u(x) \\
& \quad + u(x)\Delta \left\{(1-|x|^2)^{\lambda}\right\} \\
=~& (1-|x|^2)^{\lambda} \Delta u(x)  - 4 \lambda (1-|x|^2)^{\lambda-1} \bfR u(x) \\
& \quad -2\lambda(2\lambda+n-2) (1-|x|^2)^{\lambda-1}  u(x) \\
& \quad  + 4\lambda(\lambda-1) (1-|x|^2)^{\lambda-2}  u(x),
\end{align*}
which can be written as
\begin{equation}\label{eqn:DeltaM}
\Delta \bfM^{\lambda}  = \bfM^{\lambda} \Delta - 4 \lambda\bfM^{\lambda-1} \bfR
-2\lambda(2\lambda +n-2) \bfM^{\lambda-1} + 4\lambda(\lambda-1) \bfM^{\lambda-2}.
\end{equation}
Also, it is easy to verify that
\[
\bfR \bfM^{\lambda} = \bfM^{\lambda} \bfR  + 2\lambda \bfM^{\lambda} - 2\lambda \bfM^{\lambda-1}.
\]
Therefore,
\begin{align*}
\bfL_{\theta} \bfM^{\lambda} ~=~&  \bfM \Delta \bfM^{\lambda}  + 4\theta \bfR \bfM^{\lambda} + 2\theta (n-2-2\theta) \bfM^{\lambda}\\
=~& \bfM \left\{\bfM^{\lambda} \Delta - 4 \lambda\bfM^{\lambda-1} \bfR  -2\lambda(2\lambda+n-2) \bfM^{\lambda-1}
+ 4\lambda(\lambda-1) \bfM^{\lambda-2} \right\} \\
& \quad + 4\theta (\bfM^{\lambda} \bfR  + 2\lambda \bfM^{\lambda} - 2\lambda \bfM^{\lambda-1}) + 2\theta (n-2-2\theta) \bfM^{\lambda}\\
=~& \bfM^{\lambda} \left\{\bfM \Delta + 4(\theta-\lambda) \bfR +  2(\theta-\lambda) (n-2-2\theta+2\lambda) \bfI \right\} \\
&\quad + 4\lambda(\lambda-1-2\theta) \bfM^{\lambda-1} \\
=~& \bfM^{\lambda} \bfL_{\theta-\lambda} + 4\lambda(\lambda-1-2\theta) \bfM^{\lambda-1},
\end{align*}
as desired.

\end{proof}

We single out two special cases of Proposition \ref{prop:corrprinciple} as separate statements.

\begin{corollary}
For any $\theta \in \bbR$ we have
\begin{equation}\label{eqn:commu1}
\bfL_{\theta} \bfM  = \bfM \bfL_{\theta-1}  - 8 \theta \bfI.
\end{equation}
More generally,
\begin{equation}\label{eqn:commu2}
\bfL_{\theta} \bfM^j = \bfM^{j} \bfL_{\theta-j} + 4j(j-1-2\theta) \bfM^{j-1}, \quad j=1,2,\ldots.
\end{equation}
\end{corollary}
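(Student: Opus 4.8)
The plan is to derive both identities directly from the correspondence principle in Proposition \ref{prop:corrprinciple}, specializing the free parameter $\lambda$ to appropriate values. Since all the genuine computational work (the expansions of $\Delta \bfM^{\lambda}$ and $\bfR \bfM^{\lambda}$, and their recombination) has already been carried out in establishing \eqref{eqn:corrp}, there is no substantial obstacle remaining: the corollary is a pure specialization, and the only thing to verify is that the coefficients collapse to the stated closed forms.

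First I would treat the general formula \eqref{eqn:commu2}. Fixing a positive integer $j$ and substituting $\lambda = j$ into \eqref{eqn:corrp} gives
\[
\bfL_{\theta} \bfM^{j} = \bfM^{j} \bfL_{\theta-j} + 4j(j-1-2\theta)\bfM^{j-1},
\]
which is precisely \eqref{eqn:commu2}; no further simplification of the coefficient is needed since $\lambda(\lambda-1-2\theta)$ already has the desired shape when $\lambda=j$. I would emphasize that this step requires nothing beyond the observation that $\lambda$ in Proposition \ref{prop:corrprinciple} ranges over all of $\bbR$, so integer values are certainly permitted.

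Next I would obtain \eqref{eqn:commu1} as the instance $j=1$ (equivalently $\lambda = 1$) of what was just proved. Here the coefficient specializes to $4\cdot 1\cdot(1-1-2\theta) = -8\theta$, and $\bfM^{j-1} = \bfM^{0} = \bfI$, so that \eqref{eqn:commu2} reduces to
\[
\bfL_{\theta}\bfM = \bfM\bfL_{\theta-1} - 8\theta\,\bfI,
\]
which is \eqref{eqn:commu1}. Thus the two displayed identities are not independent: the first is merely the lowest nontrivial case of the second, and both are immediate consequences of the correspondence principle. The only point warranting care is bookkeeping in the coefficient, and I do not anticipate any real difficulty there.
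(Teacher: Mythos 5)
Your proof is correct and is exactly what the paper intends: the corollary is stated there without proof as a direct specialization of the correspondence principle \eqref{eqn:corrp}, obtained by setting $\lambda=j$ (and then $j=1$ for \eqref{eqn:commu1}), with the coefficient collapsing to $4\cdot1\cdot(1-1-2\theta)=-8\theta$ and $\bfM^{0}=\bfI$ just as you compute.
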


\begin{corollary}
For any $\theta \in \bbR$ we have
\begin{equation}\label{eqn:commu5}
\bfL_{\theta} \bfM^{1+2\theta }   = \bfM^{1+2\theta} \bfL_{-\theta-1}.
\end{equation}
\end{corollary}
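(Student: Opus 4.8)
The plan is to read \eqref{eqn:commu5} off directly from the correspondence principle (Proposition \ref{prop:corrprinciple}) by choosing the parameter $\lambda$ so that the inhomogeneous correction term vanishes. Recall that \eqref{eqn:corrp} reads
\[
\bfL_{\theta} \bfM^{\lambda} = \bfM^{\lambda} \bfL_{\theta-\lambda} + 4\lambda(\lambda-1-2\theta) \bfM^{\lambda-1}.
\]
The correction term is a scalar multiple of $\bfM^{\lambda-1}$, and its coefficient factors as $4\lambda(\lambda - 1 - 2\theta)$. So I would simply hunt for the value of $\lambda$ that forces the factor $\lambda - 1 - 2\theta$ to be zero, namely $\lambda = 1 + 2\theta$.

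Substituting $\lambda = 1 + 2\theta$ into \eqref{eqn:corrp}, the second summand drops out identically because $\lambda - 1 - 2\theta = 0$. Simultaneously, the shifted parameter in the surviving term becomes $\theta - \lambda = \theta - (1 + 2\theta) = -\theta - 1$, so that $\bfM^{\lambda}\bfL_{\theta - \lambda} = \bfM^{1+2\theta}\bfL_{-\theta - 1}$. This produces \eqref{eqn:commu5} immediately, with no further computation required.

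There is essentially no obstacle here: the entire content is the single algebraic observation that the quadratic coefficient in \eqref{eqn:corrp} has $\lambda = 1 + 2\theta$ as a root. The only point worth confirming is that this choice of $\lambda$ is legitimate for every real $\theta$, which it is since $1 + 2\theta \in \bbR$; hence the identity holds for all $\theta \in \bbR$, as asserted.
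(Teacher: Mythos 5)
Your proof is correct and is precisely how the paper intends this corollary to be read: it is singled out as the special case $\lambda = 1+2\theta$ of the correspondence principle \eqref{eqn:corrp}, where the coefficient $4\lambda(\lambda-1-2\theta)$ of the correction term vanishes and $\theta-\lambda=-\theta-1$. Nothing further is needed.
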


\begin{proposition}
We have that
\begin{equation}\label{eqn:commu3}
\Delta \bfL_{\theta} = \bfL_{\theta-1} \Delta.
\end{equation}
More generally,
\begin{equation}\label{eqn:commu4}
\Delta^j \bfL_{\theta} = \bfL_{\theta-j} \Delta^j, \quad j=1,2,\ldots.
\end{equation}
\end{proposition}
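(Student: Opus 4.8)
The plan is to deduce the general commutation relation \eqref{eqn:commu4} from the base case \eqref{eqn:commu3} by a one-line induction, and to prove \eqref{eqn:commu3} by a direct operator computation resting on two elementary commutators.

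First I would dispose of the reduction. Granting \eqref{eqn:commu3}, for $j\geq 1$ one writes $\Delta^j\bfL_\theta = \Delta^{j-1}\bigl(\Delta\bfL_\theta\bigr) = \Delta^{j-1}\bfL_{\theta-1}\Delta$, and iterating (equivalently, inducting on $j$) lowers the index of $\bfL$ by one each time a factor of $\Delta$ is pushed to the right, producing $\bfL_{\theta-j}\Delta^j$. So the entire content lies in \eqref{eqn:commu3}.

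For the base case I would rewrite the operator as $\bfL_\theta = \bfM\Delta + 4\theta\bfR + 2\theta(n-2-2\theta)\bfI$ (with $\bfM=\bfM^1$), and record two commutation identities. The first is the specialization of \eqref{eqn:DeltaM} at $\lambda=1$, namely $\Delta\bfM = \bfM\Delta - 4\bfR - 2n\bfI$. The second is the classical Euler-operator relation $\Delta\bfR=\bfR\Delta+2\Delta$, which follows from $[\partial_k,\bfR]=\partial_k$ and hence $[\partial_k^2,\bfR]=2\partial_k^2$, summed over $k$. Substituting these into $\Delta\bfL_\theta = \Delta\bfM\Delta + 4\theta\,\Delta\bfR + 2\theta(n-2-2\theta)\Delta$ and collecting terms gives
\[
\Delta\bfL_\theta = \bfM\Delta^2 + 4(\theta-1)\bfR\Delta + \bigl(8\theta-2n+2\theta(n-2-2\theta)\bigr)\Delta,
\]
which I would then compare against $\bfL_{\theta-1}\Delta = \bfM\Delta^2 + 4(\theta-1)\bfR\Delta + 2(\theta-1)(n-2\theta)\Delta$.

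The matching of the $\bfM\Delta^2$ and $\bfR\Delta$ coefficients is immediate; the only substantive check, and the one place where a slip is easy, is the zeroth-order coefficient, which reduces to the scalar identity $8\theta-2n+2\theta(n-2-2\theta) = 2(\theta-1)(n-2\theta)$. Both sides expand to $2\theta n - 4\theta^2 + 4\theta - 2n$, so the two operators agree and \eqref{eqn:commu3} follows. I expect no genuine obstacle here: the statement is an exact operator identity, and the work is entirely bookkeeping of the three commutators, with the quadratic constant term the only spot that demands care.
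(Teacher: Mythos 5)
Your proof is correct and follows essentially the same route as the paper: both rest on the two commutators $\Delta\bfM\Delta=\bfM\Delta^2-4\bfR\Delta-2n\Delta$ (the $\lambda=1$ case of \eqref{eqn:DeltaM}) and $\Delta\bfR=\bfR\Delta+2\Delta$, followed by collecting terms and iterating to get \eqref{eqn:commu4}. The scalar identity $8\theta-2n+2\theta(n-2-2\theta)=2(\theta-1)(n-2\theta)$ checks out, so there is nothing to add.
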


\begin{proof}
It is clear that
\[
\Delta \bfR = \bfR \Delta  + 2 \Delta .
\]
Also, by applying \eqref{eqn:DeltaM} to $\Delta u$, we get
\[
\Delta \bfM \Delta = \bfM \Delta^2 - 4 \bfR \Delta - 2n \Delta .
\]
It follows that
\begin{align*}
\Delta \bfL_{\theta}  ~=~&  \Delta \bfM \Delta + 4\theta \Delta \bfR + 2\theta (n-2-2\theta) \Delta\\
=~& (\bfM \Delta^2 - 4 \bfR \Delta -2n \Delta)  + 4\theta (\bfR \Delta + 2 \Delta)  + 2\theta (n-2-2\theta) \Delta\\
=~& \big\{\bfM \Delta + 4(\theta-1) \bfR +  2(\theta-1) (n-2\theta) \bfI \big\} \Delta \\
=~& \bfL_{\theta-1} \Delta.
\end{align*}
The identity \eqref{eqn:commu4} follows by iteration of \eqref{eqn:commu3}.
\end{proof}

The next result for $n=2$ is Proposition 6.1 from \cite{BH14}. It explains the usefulness of the operators $\bfL_{\theta}$.

\begin{proposition}
We have the following factorization:
\begin{equation}\label{eqn:factorization}
\bfL_0 \bfL_1 \cdots \bfL_{N-1} = \bfM^N \Delta^N , \quad N = 1,2,3,\ldots
\end{equation}
\end{proposition}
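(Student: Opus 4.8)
The plan is to prove the factorization \eqref{eqn:factorization} by induction on $N$, using the commutation relation \eqref{eqn:commu4} as the engine that lets me peel off one factor at a time. The base case $N=1$ is immediate, since by definition $\bfL_0[u] = (1-|x|^2)\Delta u = \bfM \Delta u$, which is exactly $\bfM^1 \Delta^1$.

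For the inductive step, suppose the identity holds for $N-1$, that is, $\bfL_0 \bfL_1 \cdots \bfL_{N-2} = \bfM^{N-1} \Delta^{N-1}$. I would like to write
\[
\bfL_0 \bfL_1 \cdots \bfL_{N-1} = \left(\bfL_0 \bfL_1 \cdots \bfL_{N-2}\right) \bfL_{N-1} = \bfM^{N-1} \Delta^{N-1} \bfL_{N-1},
\]
and then move the operator $\bfL_{N-1}$ to the left past the $\Delta^{N-1}$. The key observation is that \eqref{eqn:commu4} with $\theta = N-1$ and $j = N-1$ gives precisely $\Delta^{N-1} \bfL_{N-1} = \bfL_{0} \Delta^{N-1}$. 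Substituting this in yields
\[
\bfM^{N-1} \Delta^{N-1} \bfL_{N-1} = \bfM^{N-1} \bfL_0 \Delta^{N-1} = \bfM^{N-1} \bfM \Delta \Delta^{N-1} = \bfM^N \Delta^N,
\]
where in the final step I use that $\bfL_0 = \bfM \Delta$ and that $\bfM^{N-1}\bfM = \bfM^N$ (these are scalar multiplication operators and commute freely with each other). This closes the induction.

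The only genuine subtlety is to make sure the operators are composed and indexed in the right order; the factorization is not symmetric, and the parameter $\theta$ must decrease by exactly $N-1$ when commuting $\bfL_{N-1}$ through $\Delta^{N-1}$, which is exactly what \eqref{eqn:commu4} delivers. There is no hard analytic obstacle here: once the correct commutation identity is invoked, the proof is a short algebraic manipulation. I would double-check the alternative route of pulling $\bfL_0$ out on the \emph{left} first — writing $\bfL_0 (\bfL_1 \cdots \bfL_{N-1})$ and using the inductive hypothesis on the shifted product $\bfL_1 \cdots \bfL_{N-1}$ — but the version above, peeling off the rightmost factor, meshes most cleanly with \eqref{eqn:commu4} as stated and avoids re-indexing the hypothesis.
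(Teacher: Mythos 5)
Your proof is correct and is essentially identical to the paper's: induction on $N$ with base case $\bfL_0=\bfM\Delta$, followed by the commutation $\Delta^{N-1}\bfL_{N-1}=\bfL_0\Delta^{N-1}$ from \eqref{eqn:commu4} to absorb the rightmost factor. No issues.
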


\begin{proof}
Since, by definition, $\bfL_0=\bfM \Delta$, the assertion holds trivially for $N = 1$.
Suppose now that it holds for $N = k$;
\[
\bfL_0 \bfL_1 \cdots \bfL_{k-1} = \bfM^{k} \Delta^{k}.
\]
Then, by \eqref{eqn:commu4},
\[
\bfL_0 \bfL_1 \cdots \bfL_{k} = \bfM^{k} \Delta^{k} \bfL_{k} = \bfM^{k} \bfL_0 \Delta^{k} = \bfM^{k} (\bfM \Delta) \Delta^{k}
= \bfM^{k+1} \Delta^{k+1}.
\]
The proof is completed by virtue of the induction principle.
\end{proof}

\begin{corollary}\label{cor:LthetaandNharmonic}
If $u$ solves $\bfL_{N-1} [u] = 0$ in $\ball$, then $u$ is $N$-harmonic in $\ball$.
More generally, if $u$ solves $\bfL_{N-j-1} [u] = 0$
with $j\in\{0,\ldots,N-1\}$, then $\bfM^j[u]$ is $N$-harmonic in $\ball$.
\end{corollary}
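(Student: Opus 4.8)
The plan is to deduce Corollary~\ref{cor:LthetaandNharmonic} directly from the factorization identity \eqref{eqn:factorization} together with the commutation relation \eqref{eqn:commu4}. For the first assertion, suppose $u$ solves $\bfL_{N-1}[u]=0$. I would read off from \eqref{eqn:factorization} that
\[
\bfM^N \Delta^N [u] = \bfL_0 \bfL_1 \cdots \bfL_{N-2} \bfL_{N-1}[u] = \bfL_0 \bfL_1 \cdots \bfL_{N-2}[0] = 0.
\]
Since $\bfM^N[v](x)=(1-|x|^2)^N v(x)$ and the factor $(1-|x|^2)^N$ is strictly positive on the interior $\ball$, multiplication by $\bfM^N$ is injective on functions defined on $\ball$; hence $\Delta^N[u]=0$ in $\ball$, which is exactly the statement that $u$ is $N$-harmonic.

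For the more general claim, suppose $u$ solves $\bfL_{N-j-1}[u]=0$ with $j\in\{0,\ldots,N-1\}$, and set $v:=\bfM^j[u]$. The goal is to show $\Delta^N[v]=0$. First I would use \eqref{eqn:commu4} to slide the Laplacian powers past the product of operators: since each $\bfL_\theta$ shifts its index down by one under a single $\Delta$, applying $\Delta^j$ converts a string of $\bfL$'s into a shifted string. Concretely, the idea is to build a factorization ending in $\bfL_{N-j-1}$ so that applying it to $u$ produces zero. I would write $\bfM^N \Delta^N = \bfL_0 \cdots \bfL_{N-1}$ and then peel off the last $j$ factors using \eqref{eqn:commu4} to commute the trailing $\Delta^j$ leftward past $\bfM^j$; the cleanest route is to observe that $\bfL_0 \cdots \bfL_{N-j-1} = \bfM^{N-j}\Delta^{N-j}$ by \eqref{eqn:factorization} applied with $N$ replaced by $N-j$, and then to combine this with the behaviour of $\Delta^{N-j}$ on $\bfM^j[u]$.

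The main obstacle, and the step deserving the most care, is controlling how $\Delta^{N-j}$ interacts with the factor $\bfM^j$. Here I would invoke \eqref{eqn:commu4} in the reverse direction or, more transparently, use the correspondence principle \eqref{eqn:commu2} from the Corollary following Proposition~\ref{prop:corrprinciple}: applying $\bfL_0\bfL_1\cdots\bfL_{N-j-1}$ to $u$ gives $\bfM^{N-j}\Delta^{N-j}[u]$, but I actually want to reach $\Delta^N[\bfM^j u]$. The bookkeeping that links these two expressions — tracking the index shifts and the multiplicative $\bfM$ factors so that the hypothesis $\bfL_{N-j-1}[u]=0$ can be inserted — is the delicate part, since a naive commutation introduces lower-order $\bfM^{\lambda-1}$ correction terms from \eqref{eqn:commu2}. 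My expectation is that the correct chain is to apply \eqref{eqn:commu4} to get $\Delta^j \bfL_{N-1} = \bfL_{N-j-1}\Delta^j$, and then note that $v=\bfM^j[u]$ is $N$-harmonic precisely when $\bfL_{N-1}[v]=0$ would suffice via the first part; but this requires the companion identity (obtainable from \eqref{eqn:commu2} or \eqref{eqn:commu5}) showing $\bfL_{N-1}\bfM^j[u]=\bfM^j \bfL_{N-j-1}[u]+(\text{terms that vanish when }\bfL_{N-j-1}[u]=0)$. Once that vanishing is confirmed, the first part of the Corollary applied to $v$ finishes the argument.
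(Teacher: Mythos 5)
Your argument for the first assertion ($j=0$) is correct and matches the paper: $\bfL_{N-1}[u]=0$ forces $\bfL_0\cdots\bfL_{N-1}[u]=\bfM^N\Delta^N[u]=0$ by \eqref{eqn:factorization}, and $(1-|x|^2)^N>0$ on $\ball$ gives $\Delta^N u=0$.

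The general case, however, has a genuine gap at exactly the step you flag as delicate. Your final plan is to show $\bfL_{N-1}[\bfM^j u]=0$ and then invoke the first part, hoping that the correction term in \eqref{eqn:commu2} ``vanishes when $\bfL_{N-j-1}[u]=0$.'' It does not. Identity \eqref{eqn:commu2} gives
\begin{equation*}
\bfL_{N-1}\bfM^j[u] \;=\; \bfM^j\bfL_{N-j-1}[u] + 4j(j-2N+1)\,\bfM^{j-1}[u] \;=\; 4j(j-2N+1)\,\bfM^{j-1}[u],
\end{equation*}
and the coefficient $4j(j-2N+1)$ is nonzero for every $j\in\{1,\ldots,N-1\}$. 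Concretely, for $N=2$, $j=1$ and $u$ harmonic (so $\bfL_0[u]=\bfM\Delta u=0$), one gets $\bfL_1[\bfM u]=-8u\neq 0$; so $\bfM^j[u]$ does not solve $\bfL_{N-1}[v]=0$ and the first part cannot be applied to it. The mechanism that actually closes the argument is iteration, not vanishing of the remainder: applying $\bfL_{N-2},\bfL_{N-3},\ldots$ in turn, the ``main'' term dies at every stage by the same hypothesis (because the subscript of $\bfL$ and the exponent of $\bfM$ drop in lockstep, so the operator landing on $u$ is always $\bfL_{N-j-1}$), and after $k$ steps one is left with $4^k(j-k+1)_k(j-2N+1)_k\,\bfM^{j-k}[u]$. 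This finally vanishes once $k>j$, because the Pochhammer symbol $(j-k+1)_k=(j-k+1)(j-k+2)\cdots j$ then contains the factor $0$. Taking $k=N$ and using \eqref{eqn:factorization} yields $\bfM^N\Delta^N[\bfM^j u]=0$, whence $N$-harmonicity as in your first part. (Your alternative sketch --- deduce from $\bfL_0\cdots\bfL_{N-j-1}=\bfM^{N-j}\Delta^{N-j}$ that $u$ is $(N-j)$-harmonic and then argue that $\bfM^j$ raises the order of polyharmonicity by $j$ --- could also be made to work via the Almansi representation, but as written you leave precisely that interaction of $\Delta^{N-j}$ with $\bfM^j$ unproved.)
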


\begin{proof}
Since $\bfL_{N-j-1} [u] = 0$, using the operator identity \eqref{eqn:commu2}, we have
\begin{align*}
\bfL_{N-1} [\bfM^j[u]] ~=~& \bfM^j [\bfL_{N-j-1}[u]] + 4j(j-2N+1) \bfM^{j-1}[u]\\
=~& 4j(j-2N+1) \bfM^{j-1}[u].
\end{align*}
We proceed iteratively and discover that for $k=1,\ldots,N$,
\begin{equation}\label{eqn:iterated}
\bfL_{N-k}\cdots \bfL_{N-1} [\bfM^j [u]] ~=~ 4^k (j-k+1)_k (j-2N+1)_k \bfM^{j-k} [u],
\end{equation}
where $(a)_0:=1$ and $(a)_k:=a(a+1) \cdots(a+k-1)$ for $k=1,2,\ldots$
are the ascending Pochhammer symbols. When $k>j$, the right hand side of \eqref{eqn:iterated} vanishes.
In particular, when $k=N$, \eqref{eqn:iterated} reads
\[
\bfL_{0}\cdots \bfL_{N-1} [\bfM^j [u]] ~=~ 0.
\]
In view of \eqref{eqn:factorization}, this implies that $\bfM^j[u]$ is $N$-harmonic in $\ball$.
\end{proof}

\begin{corollary}\label{cor:L_N-1action}
If $u$ is $N$-harmonic in $\ball$, then $\bfL_{N-1} [u]$ is
$(N-1)$-harmonic. If $N=1$, this should be interpreted as $\bfL_0 [u] = 0$.
\end{corollary}

\subsection{Special solutions of the equation $\bfL_{\theta}[u]=0$}

For $\zeta\in \sphere$, let
\begin{equation}\label{eqn:poisson}
P_{\theta}(x, \zeta) := C_{\theta} \frac {(1-|x|^2)^{1+2\theta}}
{|x-\zeta|^{n+2\theta}}, \qquad x\in \ball,
\end{equation}
where
\[
C_{\theta} := \frac {\Gamma(n/2+\theta) \Gamma(1+\theta)}
{\Gamma(n/2) \Gamma(1+2\theta)}.
\]

\begin{lemma}\label{lem:LPoisson}
Let $\theta \in \bbR$. Then
\begin{equation}
\bfL_{\theta} [P_{\theta}(\cdot, \zeta)]=0
\end{equation}
holds for any fixed $\zeta\in \sphere$.
\end{lemma}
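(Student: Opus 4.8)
The plan is to reduce the computation to a single clean verification by peeling off the factor $(1-|x|^2)^{1+2\theta}$ via the commutation identity \eqref{eqn:commu5}. Writing $P_\theta(\cdot,\zeta) = C_\theta\,\bfM^{1+2\theta}[g_\zeta]$, where $g_\zeta(x) := |x-\zeta|^{-(n+2\theta)}$ and $C_\theta$ is an irrelevant multiplicative constant (the equation being linear and homogeneous), the identity $\bfL_\theta\bfM^{1+2\theta} = \bfM^{1+2\theta}\bfL_{-\theta-1}$ gives $\bfL_\theta[P_\theta(\cdot,\zeta)] = C_\theta\,\bfM^{1+2\theta}\big[\bfL_{-\theta-1}[g_\zeta]\big]$. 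Hence the whole statement collapses to proving the pointwise identity $\bfL_{-\theta-1}[g_\zeta] = 0$ on $\ball$.

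Setting $\mu := -\theta-1$ and $s := n+2\theta$, so that $g_\zeta = |x-\zeta|^{-s}$ and, conveniently, $n-2-2\mu = s$, I would next compute the three ingredients of $\bfL_\mu[g_\zeta]$ directly from the definition \eqref{eqn:Ltheta}. For the Laplacian I use the elementary formula $\Delta|y|^{a} = a(a+n-2)|y|^{a-2}$ with $y = x-\zeta$ and $a=-s$, which yields $\Delta g_\zeta = -2\mu s\,|x-\zeta|^{-s-2}$ (here $-s+n-2 = 2\mu$). For the radial part, $\nabla g_\zeta = -s\,|x-\zeta|^{-s-2}(x-\zeta)$, so $\bfR g_\zeta = x\cdot\nabla g_\zeta = -s\,|x-\zeta|^{-s-2}\big(|x|^2 - x\cdot\zeta\big)$.

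The one genuinely useful step is to rewrite the mixed term $|x|^2 - x\cdot\zeta$ using $|x-\zeta|^2 = 1 - 2x\cdot\zeta + |x|^2$ (recall $|\zeta|=1$). This gives $|x|^2 - x\cdot\zeta = \tfrac12\big(|x-\zeta|^2 - (1-|x|^2)\big)$, whence $4\mu\,\bfR g_\zeta = 2\mu s(1-|x|^2)|x-\zeta|^{-s-2} - 2\mu s\,|x-\zeta|^{-s}$. Adding the three contributions $(1-|x|^2)\Delta g_\zeta = -2\mu s(1-|x|^2)|x-\zeta|^{-s-2}$, the radial term just computed, and the zeroth-order term $2\mu(n-2-2\mu)g_\zeta = 2\mu s\,|x-\zeta|^{-s}$, the coefficients of $(1-|x|^2)|x-\zeta|^{-s-2}$ cancel ($-2\mu s + 2\mu s = 0$) and so do those of $|x-\zeta|^{-s}$ ($-2\mu s + 2\mu s = 0$). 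Thus $\bfL_\mu[g_\zeta]=0$, completing the reduction.

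I expect no serious obstacle: the content is a verification, and the only place where care is needed is bookkeeping the parameter relation $n-2-2\mu=s$ and the sign in $\Delta|y|^{-s}$, together with the rewriting of $x\cdot(x-\zeta)$ that forces the two surviving monomials $(1-|x|^2)|x-\zeta|^{-s-2}$ and $|x-\zeta|^{-s}$ to occur with opposite signs in the radial term versus the other two terms. As an independent check (or as an alternative to invoking \eqref{eqn:commu5}), one could of course expand $\bfL_\theta[P_\theta(\cdot,\zeta)]$ head-on, but the reduction above keeps the algebra minimal and makes the cancellation transparent.
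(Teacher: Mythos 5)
Your proposal is correct and follows essentially the same route as the paper: both reduce via the commutation identity \eqref{eqn:commu5} to showing $\bfL_{-\theta-1}\bigl[|x-\zeta|^{-(n+2\theta)}\bigr]=0$ and then verify this by direct computation of the Laplacian and radial derivative. Your substitution $\mu=-\theta-1$, $s=n+2\theta$ and the rewriting $|x|^2-x\cdot\zeta=\tfrac12\bigl(|x-\zeta|^2-(1-|x|^2)\bigr)$ merely make the final cancellation more transparent than the paper's presentation.
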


\begin{proof}
In view of \eqref{eqn:commu5}, it suffices to show that
\[
\bfL_{-1-\theta} \left[\frac {1}{|x - \zeta|^{n+2\theta}}\right]=0,
\]
where the differentiation is with respect to $x$. Simple calculations yield
\begin{align}
\Delta \left[\frac {1}{|x - \zeta|^{n+2\theta}}\right] ~=~& (2+2\theta)(n+2\theta)\ \frac {1} {|x - \zeta|^{n+2\theta+2}}\\
\intertext{and}
\bfR\left[\frac {1}{|x - \zeta|^{n+2\theta}}\right] ~=~&  (-n-2\theta)\ \frac {|x|^2 - x\cdot\zeta} {|x - \zeta|^{n+2\theta+2}}.
\end{align}
It follows that
\begin{align*}
\bfL_{-1-\theta} \left[\frac {1}{|x - \zeta|^{n+2\theta}}\right]
~=~& (2+2\theta)(n+2\theta)\ \frac {1-|x|^2} {|x - \zeta|^{n+2\theta+2}} \\
& \quad + 4 (-1-\theta) (-n - 2\theta)\ \frac {|x|^2 - x\cdot\zeta} {|x - \zeta|^{n+2\theta+2}}\\
& \quad + 2(-1-\theta) [n-2-2(-1-\theta)]\ \frac {1}{|x-\zeta|^{n+2\theta}}\\
=~& 0,
\end{align*}
as desired.
\end{proof}

For every function $f\in L^1(\sphere, d\sigma)$ we define a function $\bfP_{\theta}[f]$ on $\ball$ as
follows.
\[
\bfP_{\theta} [f] (x) ~:=~ \int\limits_{\sphere} P_{\theta}(x,\zeta) f(\zeta) d\sigma(\zeta),
\qquad x\in \ball.
\]
The function $\bfP_{\theta} [f]$ will be called the $\theta$-Poisson integral of $f$.

\begin{lemma}[{\cite[Theorem 2.4]{LP04}}]
Let $\theta > -1/2$. The Dirichlet problem
\[
\begin{cases}
\bfL_{\theta} [u] =0, & \text{in }\; \ball\\
u=f,& \text{on }\; \bbS
\end{cases}
\]
has a unique solution, which is given by $u=P_{\theta}[f]$.
\end{lemma}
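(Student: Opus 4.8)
The plan is to verify existence by showing that $u = \bfP_{\theta}[f]$ solves the problem for continuous boundary data $f$, and then to establish uniqueness via a maximum principle. For existence, the fact that $\bfL_{\theta}[\bfP_{\theta}[f]] = 0$ in $\ball$ is immediate: the kernel $P_{\theta}(\cdot,\zeta)$ is smooth in $x$ on compact subsets of $\ball$, so differentiation under the integral sign is legitimate, and Lemma \ref{lem:LPoisson} gives $\bfL_{\theta}[P_{\theta}(\cdot,\zeta)] = 0$ for each $\zeta$. The real content is the boundary behaviour, which I would obtain by proving that the family $\{P_{\theta}(x,\cdot)\}$ is an approximate identity on $\sphere$ as $x \to \zeta_0 \in \sphere$.

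Three properties are needed. First, positivity: since $\theta > -1/2$ we have $1 + 2\theta > 0$, and inspecting the Gamma factors shows $C_{\theta} > 0$, so $P_{\theta} > 0$ on $\ball$. Second, concentration: for $\zeta$ bounded away from $\zeta_0$ the denominator $|x-\zeta|^{n+2\theta}$ stays bounded below while $(1-|x|^2)^{1+2\theta} \to 0$, so the mass of $P_{\theta}(x,\cdot)$ off any fixed neighbourhood of $\zeta_0$ tends to $0$. Third, and crucially, the total mass tends to $1$: the function $g(x) := \int_{\sphere} P_{\theta}(x,\zeta)\,d\sigma(\zeta)$ is radial and, by the first remark, solves $\bfL_{\theta}[g] = 0$; reducing to $r = |x|$ and using the standard one-dimensional representation of $\int_{\sphere}|x-\zeta|^{-(n+2\theta)}\,d\sigma(\zeta)$ one finds $g(x) = C_{\theta}(1-r^2)^{1+2\theta}\,{}_2F_1\!\left(\tfrac n2+\theta,1+\theta;\tfrac n2;r^2\right)$, which by Euler's transformation equals $C_{\theta}\,{}_2F_1\!\left(-\theta,\tfrac n2-1-\theta;\tfrac n2;r^2\right)$. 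Since $1+2\theta>0$, Gauss's summation theorem applies at $r=1$ and evaluates this to $C_{\theta}^{-1}$, so $g \to 1$ as $|x|\to 1$. With these three properties the splitting $\bfP_{\theta}[f](x)-f(\zeta_0) = \int_{\sphere} P_{\theta}(x,\zeta)\,[f(\zeta)-f(\zeta_0)]\,d\sigma(\zeta) + f(\zeta_0)\,(g(x)-1)$ shows that $\bfP_{\theta}[f]$ extends continuously to $\overline{\ball}$ with boundary value $f$.

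For uniqueness, suppose $u \in C^2(\ball)\cap C(\overline{\ball})$ solves $\bfL_{\theta}[u]=0$ with $u|_{\sphere} = 0$. Dividing by $1-|x|^2$ exhibits $\bfL_{\theta}$ in the interior as a (nondegenerate) elliptic operator whose zeroth-order coefficient $2\theta(n-2-2\theta)/(1-|x|^2)$ need not be nonpositive, being positive when $0<\theta<(n-2)/2$, so the maximum principle cannot be applied directly. I would remove this term by the ground-state transform based on the positive solution $h := g$ just constructed, which is positive on $\ball$, continuous on $\overline{\ball}$, and equal to $1$ on $\sphere$. Writing $u = h\,w$, a direct computation using $\bfL_{\theta}[h]=0$ gives $\bfL_{\theta}[u] = h\bigl((1-|x|^2)\Delta w + \hat b\cdot\nabla w\bigr)$ with no zeroth-order term, so $w$ satisfies an elliptic equation to which the weak maximum principle applies on each ball $\{|x|\le\rho\}$; letting $\rho\to 1$ and using $w|_{\sphere} = u|_{\sphere}/h|_{\sphere} = 0$ forces $w\equiv 0$, hence $u\equiv 0$.

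The main obstacle is the exact evaluation of the total mass $g$ at the boundary: the whole approximate-identity argument hinges on $g \to 1$ rather than on some other constant, and this is precisely what the Euler transformation together with Gauss's summation theorem deliver, both being available only because $\theta > -1/2$ guarantees $1+2\theta > 0$. This same positive solution $g$ then does double duty as the ground state that repairs the maximum-principle argument for uniqueness, so the two halves of the proof share their one genuinely computational ingredient.
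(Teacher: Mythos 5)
The paper does not actually prove this lemma: it is imported verbatim as \cite[Theorem 2.4]{LP04}, so there is no internal argument to compare yours against. Your proposal is a correct, self-contained derivation, and it is consistent with (indeed, it reuses) the machinery the paper develops elsewhere. The one genuinely computational step, the evaluation of the total mass $g(x)=\int_{\sphere}P_{\theta}(x,\zeta)\,d\sigma(\zeta)$, is carried out correctly: the surface-integral formula from the proof of Lemma \ref{lem:finiteintgl} gives $g=C_{\theta}(1-r^2)^{1+2\theta}\hyperg{\frac n2+\theta}{1+\theta}{\frac n2}{r^2}$, Euler's transformation turns this into $C_{\theta}\,\Phi_{\theta}$ with $\Phi_{\theta}$ exactly as in \eqref{eqn:Phi_theta}, and Gauss's theorem (applicable precisely because $1+2\theta>0$) yields the boundary value $1$; this also explains the otherwise mysterious normalization $C_{\theta}$. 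Your uniqueness argument correctly identifies the real obstruction --- the zeroth-order coefficient $2\theta(n-2-2\theta)$ has the wrong sign for $0<\theta<(n-2)/2$, so the maximum principle does not apply directly --- and the ground-state transform $u=gw$ is a legitimate fix, since $g$ is continuous and strictly positive on $\overline{\ball}$ (positive inside by the integral representation, equal to $1$ on $\sphere$, continuous up to the boundary because the hypergeometric series converges absolutely at $r=1$). Two small caveats, neither fatal: the uniqueness you prove is within the class $C^{2}(\ball)\cap C(\overline{\ball})$, which should be stated explicitly since the lemma itself is silent on the function class; and the approximate-identity argument requires $f$ continuous, whereas the paper defines $\bfP_{\theta}[f]$ for $f\in L^{1}(\sphere)$, so the Dirichlet problem should be read with continuous boundary data, as in \cite{LP04}.
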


We consider the hypergeometric differential equation
\begin{equation}\label{eqn:hypergeqn}
z(1- z) f^{\prime\prime} (z) + \left[c - \left( a+b+1\right) z\right]
f^{\prime} (z) - ab f(z) = 0,
\end{equation}
where $a,b,c$ are complex parameters.
For $c\neq 0,-1,-2,\ldots$, the hypergeometric function is defined by
the power series
\begin{equation}\label{eqn:hypergeo}
\hyperg{a}{b}{c}{z} ~:=~ \sum_{k=0}^{\infty} \frac {(a)_k(b)_k}{(c)_k} \frac {z^k}{k!}, \qquad |z|<1,
\end{equation}
where $(a)_0:=1$ and $(a)_k:=a(a+1) \cdots(a+k-1)$ for $k=1,2,\ldots$.
It is well-known and straightforward to check that the function
$\hyperg{a}{b}{c}{z}$ satisfies the equation \eqref{eqn:hypergeqn} in the unit disk
$|z|<1$. 
See \cite{AAR99} for a complete account on the subject.

\begin{lemma}\label{lem:Phi_theta}
The function
\begin{equation}\label{eqn:Phi_theta}
\Phi_{\theta} (x) ~:=~ \hyperg {-\theta} {\frac {n}{2}-1-\theta} {\frac {n}{2}} {|x|^2}
\end{equation}
solves the equation $\bfL_{\theta}[u] =0$ in $\ball$.
\end{lemma}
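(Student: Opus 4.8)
The plan is to exploit that $\Phi_{\theta}$ is a \emph{radial} function, so that the second-order elliptic operator $\bfL_{\theta}$ collapses to an ordinary differential operator in the single variable $t := |x|^2$, which we then recognize as the hypergeometric operator. Writing $\Phi_{\theta}(x) = F(t)$ with $F(t) = \hyperg{-\theta}{\halfN-1-\theta}{\halfN}{t}$, the first task is to record how $\Delta$ and $\bfR$ act on radial functions. From $\nabla F(|x|^2) = 2x\,F'(t)$ one reads off $\bfR[F(|x|^2)] = 2t\,F'(t)$, and differentiating twice and summing gives $\Delta F(|x|^2) = 4t\,F''(t) + 2n\,F'(t)$. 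Substituting these into the definition $\bfL_{\theta}[u] = (1-|x|^2)\Delta u + 4\theta\,\bfR[u] + 2\theta(n-2-2\theta)u$ yields
\[
\bfL_{\theta}[\Phi_{\theta}] = 4t(1-t)F'' + \bigl[2n + (8\theta - 2n)t\bigr]F' + 2\theta(n-2-2\theta)F,
\]
and dividing through by $4$ produces
\[
t(1-t)F'' + \Bigl[\halfN - \bigl(\halfN - 2\theta\bigr)t\Bigr]F' + \tfrac{1}{2}\theta(n-2-2\theta)F.
\]

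The second step is to match this expression against the hypergeometric equation \eqref{eqn:hypergeqn} for the parameter choice $a = -\theta$, $b = \halfN - 1 - \theta$, $c = \halfN$. One verifies first that $c = \halfN$ reproduces the constant in the first-order coefficient; next that $a + b + 1 = \halfN - 2\theta$ reproduces the coefficient of $t$ there; and finally that $-ab = \tfrac{1}{2}\theta(n-2-2\theta)$, which follows by expanding $ab = -\theta(\halfN - 1 - \theta)$. Since $F = \hyperg{a}{b}{c}{t}$ satisfies \eqref{eqn:hypergeqn} on $|z| < 1$, the displayed expression vanishes identically for $t = |x|^2 \in [0,1)$, and hence $\bfL_{\theta}[\Phi_{\theta}] = 0$ throughout $\ball$.

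There is no genuine obstacle here: the whole argument is a direct verification once the reduction to an ODE is made. The only point demanding care is the bookkeeping of the lower-order coefficients, in particular confirming that the zeroth-order term $2\theta(n-2-2\theta)$ produced by $\bfL_{\theta}$ coincides with $-4ab$ for the chosen parameters. I note that the same radial computation of $\bfR$ and $\Delta$ will be reusable elsewhere, and that one should retain the parameters $a,b,c$ in this symbolic form rather than specializing early, so that the three matching identities can be checked cleanly and independently.
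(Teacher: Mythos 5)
Your proposal is correct and follows essentially the same route as the paper: reduce to the radial variable $t=|x|^2$ via $\Delta F(|x|^2)=4tF''+2nF'$ and $\bfR[F(|x|^2)]=2tF'$, obtain the hypergeometric equation with $a=-\theta$, $b=\frac n2-1-\theta$, $c=\frac n2$, and verify the three coefficient matchings. The bookkeeping checks out, in particular $-ab=\tfrac12\theta(n-2-2\theta)$, so nothing further is needed.
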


\begin{proof}
In the spherical-polar coordinates $x=r\zeta$, $r>0$, $\zeta\in \sphere$,
the Laplace operator $\Delta$ can be written as
\begin{equation}
\Delta ~=~ \frac {\partial^2}{\partial r^2} + \frac {n-1}{r} \frac {\partial}{\partial r}
+ \frac {1}{r^2} \Delta_{\sphere},
\end{equation}
where
\[
\Delta_{\sphere} ~:=~ \sum_{i=1}^{n-1} \frac {\partial^2}{\partial \zeta_i^2}
- \sum_{i=1}^{n-1} \sum_{j=1}^{n-1} \zeta_i \zeta_j \frac {\partial^2}{\partial \zeta_i \partial \zeta_j}
- (n-1) \sum_{i=1}^{n-1} \zeta_i \frac {\partial}{\partial \zeta_i}
\]
is the Laplace-Beltrami operator on the unit sphere $\sphere$. See for instance \cite[Lemma 1.4.1]{DX13}.

Now we consider radial solutions of the equation $\bfL_{\theta}[u]=0$.
Suppose that $u(x)=f(|x|^2)$, where $f$ is a $C^2$ function on the interval $(0,1)$.
Then, with $r = |x|$,
\begin{align*}
\Delta u(x) ~=~& 4 r^2 f^{\prime\prime} (r^2) + 2n f^{\prime}(r^2),\\
\bfR[u](x) ~=~& 2r^2 f^{\prime}(r^2)
\end{align*}
and hence
\begin{align*}
\bfL_{\theta}[u](x) ~=~ 4r^2 (1-r^2) f^{\prime\prime} (r^2) + \left[2n (1-r^2) + 8\theta r^2\right] f^{\prime}(r^2)
+ 2\theta (n-2-2\theta) f(r^2).
\end{align*}
Therefore, the differential equation $\bfL_{\theta}[u] =0$ deduces to
\begin{equation}\label{eqn:hypergeqn2}
z(1- z) f^{\prime\prime} (z) + \left\{ \frac {n}{2} - \left( \frac {n}{2} - 2\theta\right) z\right\}
f^{\prime} (z) + \theta \left( \frac {n}{2} - 1 -\theta\right) f(z) = 0.
\end{equation}
This is the hypergeometric differential equation, with parameters
\[
a=-\theta,\quad b=\frac {n}{2} - 1 -\theta, \quad c=\frac {n}{2}.
\]
The hypergeometric function
\[
\hyperg {-\theta} {\frac {n}{2} - 1 -\theta} {\frac {n}{2}} {z}
\]
satisfies the equation \eqref{eqn:hypergeqn2} in the unit disk
$|z|<1$, and hence the function $\Phi_{\theta}$ solves the equation $\bfL_{\theta}[u] =0$ in $\ball$.
\end{proof}

The following lemma is immediate from the Gauss summation theorem for the hypergeometric function.
We include a proof here for the reader's convenience.

\begin{lemma}\label{lem:Phithetabounded}
$\Phi_{\theta}$ is bounded on $\ball$ if and only if $\theta>-1/2$.
\end{lemma}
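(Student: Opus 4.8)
The plan is to reduce the question to the boundary behaviour of a single-variable hypergeometric function and then exploit the positivity of its Taylor coefficients. Write $\Phi_\theta(x) = F(|x|^2)$, where
\[
F(z) = \hyperg{-\theta}{\tfrac{n}{2}-1-\theta}{\tfrac{n}{2}}{z} = \sum_{k=0}^\infty c_k z^k, \qquad c_k = \frac{(-\theta)_k\,(\frac n2 - 1 -\theta)_k}{(\frac n2)_k\, k!}.
\]
Since $F$ is given by a power series of radius of convergence at least $1$, the function $\Phi_\theta$ is continuous on $\ball$, and as it is radial we have $\sup_{\ball} |\Phi_\theta| = \sup_{0\le z<1}|F(z)|$. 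Thus $\Phi_\theta$ is bounded on $\ball$ if and only if $F$ is bounded on $[0,1)$, and everything reduces to the behaviour of $F(z)$ as $z \to 1^-$. Setting $a=-\theta$, $b = \frac n2 - 1 - \theta$ and $c = \frac n2$, the relevant quantity is $c-a-b = 1+2\theta$.

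First I would treat the direction $\theta > -1/2$, that is $c - a - b > 0$. If $a$ or $b$ is a non-positive integer the series terminates and $F$ is a polynomial, hence bounded; otherwise I use Stirling's formula in the form
\[
c_k \sim \frac{\Gamma(c)}{\Gamma(a)\Gamma(b)}\, k^{a+b-c-1} = \frac{\Gamma(c)}{\Gamma(a)\Gamma(b)}\,k^{-2-2\theta}, \qquad k\to\infty.
\]
Since $-2-2\theta < -1$ exactly when $\theta > -1/2$, the series $\sum_k |c_k|$ converges, so by the Weierstrass $M$-test $F$ extends continuously to $[0,1]$ and is in particular bounded on $[0,1)$.

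For the converse I would show that $\theta \le -1/2$ forces $F$ to be unbounded, and here the key observation is positivity. For $n\ge 2$ and $\theta \le -1/2 < 0$ we have $a = -\theta \ge \tfrac12 > 0$ and $b = \tfrac n2 - 1 - \theta \ge \tfrac{n-1}{2} > 0$; consequently every Pochhammer factor $(a)_k$, $(b)_k$, $(c)_k$ is strictly positive and so $c_k > 0$ for all $k$. In particular the series does not terminate, $F$ is strictly increasing on $[0,1)$, and $\lim_{z\to 1^-} F(z) = \sum_{k=0}^\infty c_k \in (0,+\infty]$. The same asymptotics $c_k \sim \mathrm{const}\cdot k^{-2-2\theta}$ now show that $\sum_k c_k = +\infty$ precisely when $-2-2\theta \ge -1$, that is when $\theta \le -1/2$. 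Hence $F(z)\to +\infty$ as $z\to 1^-$ and $\Phi_\theta$ is unbounded, completing the characterization.

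The whole argument is elementary once the reduction to $F$ on $[0,1)$ is made; I expect the point requiring the most care to be the converse direction, where one must rule out accidental boundedness. Using the positivity of the coefficients $c_k$ (valid because $\theta<0$ makes both $a$ and $b$ positive) sidesteps the delicate connection formulae for $\hyperg{a}{b}{c}{z}$ near $z=1$ — in particular the logarithmic corrections that arise when $c-a-b=1+2\theta$ is a non-positive integer — since monotone convergence reduces unboundedness to the plain divergence of the positive series $\sum_k c_k$.
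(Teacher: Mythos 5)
Your argument is correct and follows the same route as the paper: expand $\Phi_\theta$ as a power series in $|x|^2$ and read off boundedness from the coefficient asymptotics $c_k \sim \mathrm{const}\cdot k^{-2\theta-2}$. The paper leaves the conclusion at "it is easy to see ... and the assertion follows," whereas you usefully spell out the two details it suppresses — the terminating (polynomial) case when $-\theta$ is a non-positive integer, and the positivity of the coefficients for $\theta\le -1/2$ that turns divergence of $\sum_k c_k$ into genuine unboundedness of $F(z)$ as $z\to 1^-$.
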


\begin{proof}
By definition,
\[
\Phi_{\theta}(x) ~=~
\sum_{k=0}^{\infty}\frac{(-\theta)_k (n/2 - 1 -\theta)_k} {(1)_k (n/2)_k }
\ |x|^{2k}.
\]
It is easy to see that the coefficients in the series are
of order $k^{-2\theta-2}$ as $k\to \infty$, and the assertion of the lemma follows.
\end{proof}

\begin{corollary}\label{cor:PhiinPH}
Suppose that $0<p<\infty$, $N\in \bbN$ and $j\in \{0,1,\ldots,N-1\}$.
The function $\bfM^{j}[\Phi_{N-j-1}]$ is in $\PH_{N, \alpha}^p (\ball)$ for any $\alpha>-1-jp$.
\end{corollary}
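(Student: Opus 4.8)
The statement has two components: that $\bfM^{j}[\Phi_{N-j-1}]$ lies in $\PH_{N}(\ball)$, and that it lies in $L_{\alpha}^{p}(\ball)$ for the stated range of $\alpha$. The first component I would dispatch immediately from the machinery already in place. By Lemma \ref{lem:Phi_theta} applied with $\theta = N-j-1$, the function $\Phi_{N-j-1}$ solves $\bfL_{N-j-1}[u]=0$ on $\ball$. Since $j\in\{0,\ldots,N-1\}$, Corollary \ref{cor:LthetaandNharmonic} then applies verbatim (with $u=\Phi_{N-j-1}$) and yields that $\bfM^{j}[\Phi_{N-j-1}]$ is $N$-harmonic in $\ball$. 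Thus only the weighted integrability remains to be checked.

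For the integrability, the decisive observation is that the exponent $N-j-1$ is nonnegative, because $j\le N-1$; in particular $N-j-1 > -\tfrac12$. Hence Lemma \ref{lem:Phithetabounded} guarantees that $\Phi_{N-j-1}$ is bounded on $\ball$, say $|\Phi_{N-j-1}(x)|\le C$. This is the crucial simplification: it reduces everything to controlling the weight $(1-|x|^2)^{j}$ produced by $\bfM^{j}$, with no need for delicate boundary asymptotics of the hypergeometric series.

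With boundedness in hand, I would estimate
\[
\big|\bfM^{j}[\Phi_{N-j-1}](x)\big|^{p} = (1-|x|^2)^{jp}\,|\Phi_{N-j-1}(x)|^{p} \le C^{p}\,(1-|x|^2)^{jp}.
\]
Recalling that $\mathrm{dist}(x,\partial\ball)=1-|x|$ and that $1-|x|\le 1-|x|^2\le 2(1-|x|)$, so the two boundary gauges are comparable with constants uniform over $\ball$, the $L_\alpha^p$-norm is dominated by a constant times $\int_{\ball}(1-|x|)^{jp+\alpha}\,dV(x)$. Passing to polar coordinates reduces this to the one-dimensional integral $\int_{0}^{1}(1-r)^{jp+\alpha}\,r^{n-1}\,dr$, which converges precisely when $jp+\alpha>-1$, that is, when $\alpha>-1-jp$. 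This is exactly the hypothesis.

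I do not anticipate any serious obstacle here; the entire content lies in recognizing that $j\le N-1$ forces $N-j-1\ge 0$, placing us in the bounded regime of Lemma \ref{lem:Phithetabounded}. The only points demanding a modicum of care are the comparability of $(1-|x|^2)$ with the boundary distance $1-|x|$ (harmless, since the relevant constants are uniform on $\ball$) and the elementary convergence criterion for the radial integral, where the factor $r^{n-1}$ from the volume element stays bounded away from the boundary and so does not affect the threshold. I would also remark that the threshold is sharp: a matching lower bound is available because $\Phi_{N-j-1}$ tends to a nonzero limit at $\sphere$ (by Gauss's summation formula, since here $c-a-b=1+2(N-j-1)>0$), which shows that $\alpha=-1-jp$ cannot be included.
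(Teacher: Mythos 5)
Your proof is correct and takes essentially the same route as the paper's: $N$-harmonicity via Lemma~\ref{lem:Phi_theta} combined with Corollary~\ref{cor:LthetaandNharmonic}, and integrability via the boundedness of $\Phi_{N-j-1}$ from Lemma~\ref{lem:Phithetabounded} followed by the elementary radial integral, which the paper dismisses as ``easy to check'' and you have simply written out. Your closing sharpness remark (via Gauss's formula) is correct but not part of the statement.
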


\begin{proof}
By Lemma \ref{lem:Phi_theta} and Corollary \ref{cor:LthetaandNharmonic}, the function
$\bfM^{j}[\Phi_{N-j-1}]$ is $N$-harmonic. In view of Lemma \ref{lem:Phithetabounded},
the function $\Phi_{N-j-1}$ is bounded in $\ball$, so it is easy to check that
$\bfM^{j}[\Phi_{N-j-1}]$ is in $\PH_{N, \alpha}^p (\ball)$ for any $\alpha>-1-jp$.
\end{proof}

\subsection{Mapping properties of $\bfL_{\theta}$}

We will next analyse the image of $\PH^p_{N,\alpha}(\ball)$ under $\bfL_{\theta}$.

\begin{lemma}[{\cite[Lemma 5]{Pav97}}]\label{lem:Pav97-0}
Suppose that $0<p<\infty$ and $u$ is $N$-harmonic in $\ball$. Then
\begin{equation}
|u(x)|^p ~\lesssim~  r^{-n} \int\limits_{B(x,r)} |u(y)|^p dV(y)
\end{equation}
for all $x\in \ball$ and $r\in (0,1)$, where the implicit constant depends only on $p$, $N$ and $n$.
\end{lemma}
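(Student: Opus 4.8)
My plan is to reduce the whole statement to the single pointwise inequality
\[
|u(x)| \;\le\; C(n,N)\,\delta^{-n}\int_{B(x,\delta)} |u(y)|\,dV(y),
\]
valid whenever $B(x,\delta)\subset\ball$, and then to deduce the $L^p$ version for $p\ge1$ by H\"older's inequality and for $0<p<1$ by a Young-inequality-plus-iteration argument. Throughout I work under the hypothesis $B(x,r)\subset\ball$ (i.e. $r<1-|x|$), which is the case actually used later; I will refer to the displayed inequality as $(\dagger)$.

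To obtain $(\dagger)$ I will use the generalized mean value property of polyharmonic functions. Writing the spherical mean $\mathcal{M}(\rho):=\int_{\sphere} u(x+\rho\zeta)\,d\sigma(\zeta)$, Pizzetti's formula together with $\Delta^{N} u=0$ shows that $\mathcal{M}(\rho)=\sum_{k=0}^{N-1} a_k\rho^{2k}$ is a polynomial in $\rho^{2}$ of degree at most $N-1$, with $a_0=u(x)$. Integrating against a radial weight gives, for any $w$ supported on $[0,\delta]$,
\[
\int_{B(x,\delta)} u(y)\,w(|y-x|)\,dV(y) \;=\; c_n\sum_{k=0}^{N-1} a_k\int_0^\delta w(\rho)\,\rho^{\,n-1+2k}\,d\rho,
\]
and the $N$ moment conditions $\int_0^\delta w(\rho)\rho^{\,n-1+2k}\,d\rho=\delta_{k0}$ form a nonsingular linear system. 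Choosing $w(\rho)=\delta^{-n}W(\rho/\delta)$ for a fixed polynomial $W$ on $[0,1]$ depending only on $n$ and $N$, the higher moments drop out and I obtain a representation $u(x)=\int_{B(x,\delta)} u(y)\,\phi_\delta(|y-x|)\,dV(y)$ with $\|\phi_\delta\|_\infty\lesssim\delta^{-n}$. Estimating this integral trivially yields $(\dagger)$; and for $p\ge1$, H\"older's inequality applied to the representation gives $|u(x)|\lesssim\delta^{-n/p}(\int_{B(x,\delta)}|u|^p)^{1/p}$, which is the claim.

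For $0<p<1$ Jensen is unavailable, and I will run the classical self-improving argument. Set $R:=r$ and $F(\rho):=\sup_{B(x,\rho)}|u|$ for $0\le\rho<R$; since $u$ is continuous, $F$ is finite and nondecreasing. For $\rho<\rho'\le R$ and $y\in B(x,\rho)$, I apply $(\dagger)$ on $B(y,\rho'-\rho)\subset B(x,\rho')$ and split $|u|=|u|^{p}|u|^{1-p}\le |u|^{p}F(\rho')^{1-p}$ to obtain
\[
F(\rho)\;\le\; C_0(\rho'-\rho)^{-n}\,F(\rho')^{1-p}\!\int_{B(x,R)}\!|u|^{p}\,dV.
\]
Young's inequality with exponents $1/(1-p)$ and $1/p$ then absorbs the factor $F(\rho')^{1-p}$, yielding $F(\rho)\le\varepsilon F(\rho')+C(\varepsilon,n,p)(\rho'-\rho)^{-n/p}A^{1/p}$ with $A:=\int_{B(x,R)}|u|^p\,dV$ and any $\varepsilon\in(0,1)$. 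The standard iteration lemma for such inequalities gives $F(R/2)\le C\,R^{-n/p}A^{1/p}$, and taking $y=x$ finishes the case $0<p<1$.

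I expect the case $0<p<1$ to be the main obstacle: the failure of Jensen's inequality forces the interpolation between the $p=1$ bound and the sup norm, and the exponent $1-p<1$ makes the resulting inequality genuinely nonlinear in $F$, so the conclusion follows only after the Young-plus-iteration step rather than by a one-line estimate. The other delicate point is the uniform bound $\|\phi_\delta\|_\infty\lesssim\delta^{-n}$ in the representation, which is exactly where $N$-harmonicity (through the termination of the Pizzetti series) and the correct scaling enter; once this is in place the range $p\ge1$ is routine.
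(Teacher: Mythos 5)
The paper does not actually prove this lemma; it is imported verbatim as \cite[Lemma 5]{Pav97}, so there is no internal proof to compare against. Your argument is a correct, self-contained reconstruction of the standard proof, and it is essentially the one Pavlovi\'c-type arguments use: (i) the terminating Pizzetti expansion (equivalently, the Almansi representation about $x$) shows the spherical means $\rho\mapsto\int_{\sphere}u(x+\rho\zeta)\,d\sigma(\zeta)$ form a polynomial in $\rho^2$ of degree $\le N-1$ with value $u(x)$ at $\rho=0$; (ii) killing the higher moments with a radial weight $w(\rho)=\delta^{-n}W(\rho/\delta)$ (the moment matrix is a Gram matrix of the independent functions $\rho^{(n-1)/2+2j}$, hence invertible) yields the reproducing identity and the $L^1$ bound $(\dagger)$ with the right scaling; (iii) H\"older handles $p\ge1$, and the $|u|\le|u|^pF(\rho')^{1-p}$ splitting plus Young plus the standard absorption/iteration lemma handles $0<p<1$. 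Two small points are worth recording explicitly. First, as you note, the inequality can only hold with a constant independent of $x$ and $r$ when $B(x,r)\subset\ball$; this is the form in which the lemma is actually invoked in the paper (with $r=\tfrac12(1-|x|)$ in Lemma \ref{lem:ptwsest}), so your standing hypothesis is the correct reading of the statement. Second, in the iteration step the absorption lemma requires $F$ to be bounded on the closed interval on which it is applied, so one should run it on $[R/2,\tau_1]$ with $\tau_1<R$ (e.g.\ $\tau_1=3R/4$), where $F(\tau_1)=\sup_{\overline{B(x,\tau_1)}}|u|<\infty$ by continuity; this is a cosmetic adjustment and does not affect the conclusion $F(R/2)\lesssim R^{-n/p}\bigl(\int_{B(x,R)}|u|^p\,dV\bigr)^{1/p}$.
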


\begin{lemma}\label{lem:ptwsest}
Suppose $0<p<\infty$ and $\alpha\in \bbR$. Then
\begin{equation}
|u(x)| ~\lesssim~  (1-|x|^2)^{-(n+\alpha)/p} \|u\|_{p,\alpha}
\end{equation}
for all $u\in \PH_{N,\alpha}^p (\ball)$ and $x\in \ball$.
\end{lemma}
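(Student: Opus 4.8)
The plan is to derive the pointwise bound from the local sub-mean-value inequality of Lemma \ref{lem:Pav97-0}, by integrating against the weight over a ball whose radius is comparable to the distance to the boundary. The mechanism is the standard one for weighted Bergman-type spaces: on such a ball the weight is essentially constant, so the mean-value integral of $|u|^p$ can be traded for a constant multiple of the weighted integral $\|u\|_{p,\alpha}^p$.

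First I would fix $x\in\ball$ and record the elementary comparison $1-|x|\leq 1-|x|^2\leq 2(1-|x|)$, so that $1-|x|^2$ is comparable to $\mathrm{dist}(x,\partial\ball)=1-|x|$. I then set $r:=\tfrac12(1-|x|)$, which lies in $(0,1)$ and guarantees $B(x,r)\subset\ball$. The key observation is that on $B(x,r)$ the distance to the boundary is essentially frozen: for every $y\in B(x,r)$ one has $\tfrac12(1-|x|)\leq 1-|y|\leq \tfrac32(1-|x|)$, whence $[\mathrm{dist}(y,\partial\ball)]^{\alpha}$ is comparable to $(1-|x|^2)^{\alpha}$ uniformly in $y$, with constants depending only on $\alpha$ and $n$. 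The sign of $\alpha$ is irrelevant here, since only the two-sided comparability of the positive quantities is used.

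Next I would apply Lemma \ref{lem:Pav97-0} with this choice of $r$ to obtain $|u(x)|^p\lesssim r^{-n}\int_{B(x,r)}|u(y)|^p\,dV(y)$. Inserting the factor $[\mathrm{dist}(y,\partial\ball)]^{\alpha}[\mathrm{dist}(y,\partial\ball)]^{-\alpha}$ into the integrand and invoking the frozen-weight comparison replaces $[\mathrm{dist}(y,\partial\ball)]^{-\alpha}$ by a constant multiple of $(1-|x|^2)^{-\alpha}$, which can be pulled out of the integral. Enlarging the domain of integration from $B(x,r)$ to all of $\ball$ then bounds the remaining integral by $\|u\|_{p,\alpha}^p$. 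Combining this with $r^{-n}\asymp(1-|x|^2)^{-n}$ yields $|u(x)|^p\lesssim (1-|x|^2)^{-(n+\alpha)}\|u\|_{p,\alpha}^p$, and extracting the $p$-th root gives the assertion.

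This argument is entirely routine, and I do not anticipate a genuine obstacle; the only point requiring care is the uniform two-sided control of the weight on $B(x,r)$, which is precisely what dictates the choice $r\asymp 1-|x|$. Note also that $N$-harmonicity enters only through Lemma \ref{lem:Pav97-0}, so the exponent $(n+\alpha)/p$ and the implicit constant are independent of $N$ apart from the dependence already present in that lemma.
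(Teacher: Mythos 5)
Your argument is correct and is essentially identical to the paper's proof: both apply Lemma \ref{lem:Pav97-0} with $r=\tfrac12(1-|x|)$, use that $1-|y|^2\approx 1-|x|^2$ on $B(x,r)$ to freeze the weight, and then enlarge the domain of integration to bound by $\|u\|_{p,\alpha}^p$. No further comment is needed.
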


\begin{proof}
Let $u\in \PH_{N,\alpha}^p (\ball)$ and $x\in \ball$ be fixed.
Applying Lemma \ref{lem:Pav97-0} with $r=\frac {1}{2} (1-|x|)$, we have
\begin{equation}\label{eqn:ptwsest1}
|u(x)|^p ~\lesssim~ (1-|x|)^{-n} \int\limits_{B(x,\frac {1}{2} (1-|x|))} |u(y)|^p dV(y)
\end{equation}
Note that if $y\in B(x,\frac {1}{2} (1-|x|))$ then $1-|y|^2 \approx 1-|x|^2$. It follows from
\eqref{eqn:ptwsest1} that
\begin{align*}
|u(x)|^p ~\lesssim~& (1-|x|)^{-n-\alpha} \int\limits_{B(x,\frac {1}{2} (1-|x|))} |u(y)|^p (1-|y|^2)^{\alpha} dV(y)\\
\lesssim~&  (1-|x|)^{-n-\alpha} \|u\|_{p,\alpha}^p
\end{align*}
as desired.
\end{proof}

\begin{lemma}[{\cite[Lemma 6]{Pav97}}]\label{lem:Pav97}
Suppose that $0< p <+\infty $, $N\in \bbN$ and $\alpha \in \bbR$. If $u\in \PH_{N,\alpha}^p(\ball)$ then
$\partial_j u\in \PH_{N,\alpha+p}^p(\ball)$, $j=1,\ldots, n$. 
\end{lemma}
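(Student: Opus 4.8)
The plan is to reduce the weighted integral bound to a pointwise estimate that controls a derivative of $u$ by an average of $u$ itself, and then to integrate that estimate by Tonelli's theorem. First observe that $\partial_j u$ is again $N$-harmonic: since $\partial_j$ commutes with $\Delta$, we have $\Delta^N(\partial_j u)=\partial_j(\Delta^N u)=0$. Thus the only thing to prove is the a priori inequality $\|\partial_j u\|_{p,\alpha+p}\lesssim\|u\|_{p,\alpha}$.

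The engine is the interior gradient estimate for $N$-harmonic functions: if $v$ is $N$-harmonic on a ball $B(a,R)\subset\ball$, then
\[
|\nabla v(a)| ~\lesssim~ R^{-1}\sup_{B(a,R/2)}|v|,
\]
with an implicit constant depending only on $n$ and $N$. I would justify this as a standard interior Cauchy estimate for the elliptic operator $\Delta^N$: after rescaling to the unit ball it reduces to the assertion that an $N$-harmonic $v$ on $B(0,1)$ with $\sup_{B(0,1/2)}|v|\le 1$ has $|\nabla v(0)|$ bounded by a dimensional constant, which follows from interior elliptic regularity (equivalently, from the real-analyticity of solutions of $\Delta^N v=0$ together with a normal-family argument). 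This is exactly the step where $N$-harmonicity, rather than mere harmonicity, has to be handled, and it is the main obstacle; the rest is routine.

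Next I would convert the supremum into an $L^p$ average. Fix $x\in\ball$ and set $d=1-|x|$. Applying the gradient estimate on a ball $B(x,R)\subset\ball$ with $R\approx d$, and then bounding the supremum of $|u|$ over the smaller ball by an $L^p$ average exactly as in the proof of Lemma~\ref{lem:ptwsest} (using Lemma~\ref{lem:Pav97-0} on balls of radius $\approx d$ still contained in $\ball$), one obtains
\[
|\partial_j u(x)|^p ~\lesssim~ d^{-p}\,d^{-n}\int_{B(x,cd)}|u(y)|^p\,dV(y)
\]
for a fixed $c\in(0,1)$. Since $1-|y|^2\approx 1-|x|^2\approx d$ for $y\in B(x,cd)$, multiplying by $(1-|x|^2)^{\alpha+p}$ yields the clean pointwise bound
\[
(1-|x|^2)^{\alpha+p}|\partial_j u(x)|^p ~\lesssim~ (1-|x|)^{-n}\int_{B(x,c(1-|x|))}|u(y)|^p(1-|y|^2)^{\alpha}\,dV(y).
\]
Note that all of this is valid for the full range $0<p<\infty$, since Lemma~\ref{lem:Pav97-0} is.

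Finally I would integrate in $x$ over $\ball$ and apply Tonelli's theorem to swap the order of integration, obtaining
\[
\|\partial_j u\|_{p,\alpha+p}^p ~\lesssim~ \int_{\ball}|u(y)|^p(1-|y|^2)^{\alpha}\Big(\int_{E_y}(1-|x|)^{-n}\,dV(x)\Big)\,dV(y),
\]
where $E_y:=\{x\in\ball:|x-y|<c(1-|x|)\}$. For $x\in E_y$ the inequality $|x-y|<c(1-|x|)$ forces $1-|x|\approx 1-|y|$ and $|x-y|\lesssim 1-|y|$, so $E_y$ is contained in a ball of radius $\lesssim 1-|y|$ on which $(1-|x|)^{-n}\approx(1-|y|)^{-n}$; hence the inner integral is bounded by a constant uniformly in $y$. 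This gives $\|\partial_j u\|_{p,\alpha+p}^p\lesssim\|u\|_{p,\alpha}^p$, which is the assertion. The only nontrivial ingredient is the interior gradient estimate; once a derivative of $u$ is expressed through the values of $u$ on a slightly larger ball, the weighting and the Tonelli bookkeeping are entirely elementary.
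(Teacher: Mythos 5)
Your argument is correct: the reduction to the interior gradient estimate, the conversion of the supremum to an $L^p$ average via Lemma~\ref{lem:Pav97-0}, and the Tonelli swap with the observation that $1-|x|\approx 1-|y|$ on $E_y$ all hold, and the proof is valid for the full range $0<p<\infty$. The paper itself does not prove this lemma but cites it to Pavlovi\'c \cite{Pav97}, and your route is essentially the standard one used there (a scaled Cauchy-type derivative estimate for $\Delta^N$ combined with the sub-mean-value inequality of Lemma~\ref{lem:Pav97-0}), so there is nothing to flag beyond noting that the interior gradient estimate you invoke is indeed standard interior elliptic regularity for the constant-coefficient operator $\Delta^N$.
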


\begin{corollary}\label{cor:Pav97-2}
Suppose that $0< p <+\infty $, $N\in \bbN$  and $\alpha \in \bbR$. If $u\in \PH_{N,\alpha}^p(\ball)$ then
$\Delta^k u\in \PH_{N-k,\alpha+2kp}^p(\ball)$ for each $k\in \{1,\ldots, N-1\}$.
\end{corollary}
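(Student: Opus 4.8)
The plan is to derive this as a straightforward iteration of Lemma~\ref{lem:Pav97}, using the two structural facts that differentiation commutes with $\Delta$ (so it preserves polyharmonicity) while a single application of $\Delta$ lowers the order of polyharmonicity by one. The key observation is that $\Delta = \sum_{j=1}^{n} \partial_j^2$ is a finite sum of \emph{second}-order pure derivatives, and Lemma~\ref{lem:Pav97} raises the weight exponent by $p$ per first derivative; thus each $\partial_j^2$ raises the weight by $2p$, which is exactly the increment $2kp$ demanded for a single factor of $\Delta$.

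First I would settle the base case $k=1$. Fix $u\in\PH_{N,\alpha}^{p}(\ball)$. Applying Lemma~\ref{lem:Pav97} once gives $\partial_j u\in\PH_{N,\alpha+p}^{p}(\ball)$, and applying it a second time (to $\partial_j u$, which is again $N$-harmonic) gives
\[
\partial_j^2 u = \partial_j(\partial_j u)\in \PH_{N,\alpha+2p}^{p}(\ball),\qquad j=1,\ldots,n.
\]
Since
\[
\Delta u = \sum_{j=1}^{n}\partial_j^2 u
\]
is a finite sum of functions lying in $L_{\alpha+2p}^{p}(\ball)$, and this space is closed under finite sums (using the ordinary triangle inequality when $p\geq 1$ and the inequality $\big|\sum_j a_j\big|^p\lesssim \sum_j |a_j|^p$ when $0<p<1$), we conclude $\Delta u\in L_{\alpha+2p}^{p}(\ball)$. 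Moreover $\Delta u$ is $(N-1)$-harmonic, because $\Delta^{N-1}(\Delta u)=\Delta^{N}u=0$. Hence $\Delta u\in\PH_{N-1,\alpha+2p}^{p}(\ball)$, which is the asserted statement for $k=1$.

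The general case then follows by induction on $k$. Assuming $\Delta^{k}u\in\PH_{N-k,\alpha+2kp}^{p}(\ball)$ (an $(N-k)$-harmonic function, with $N-k\geq 2$ in the inductive step so that the argument applies), I would apply the base-case reasoning above verbatim to $\Delta^{k}u$ in place of $u$, with $N-k$ playing the role of $N$ and $\alpha+2kp$ that of $\alpha$. This yields
\[
\Delta^{k+1}u = \Delta(\Delta^{k}u)\in\PH_{N-k-1,\,\alpha+2kp+2p}^{p}(\ball)=\PH_{N-(k+1),\,\alpha+2(k+1)p}^{p}(\ball),
\]
completing the induction for all $k\in\{1,\ldots,N-1\}$. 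There is no serious obstacle here: the only points requiring a little care are the subadditivity of $\|\cdot\|_{p,\alpha+2p}^p$ over the finite sum in the range $0<p<1$, and the bookkeeping that tracks simultaneously the drop in polyharmonic order and the weight increment; both are routine given Lemma~\ref{lem:Pav97}.
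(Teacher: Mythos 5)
Your proposal is correct and is essentially the argument the paper intends (the corollary is stated without proof precisely because it follows by iterating Lemma~\ref{lem:Pav97} twice per factor of $\Delta$, summing the finite collection $\partial_j^2 u$ with the $p$-th power quasi-triangle inequality when $0<p<1$, and noting that $\Delta$ drops the polyharmonicity order by one). All the details you flag — the subadditivity for small $p$ and the bookkeeping of order versus weight — are handled correctly.
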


\begin{proposition}\label{prop:mappingprpty}
Suppose that $0< p <+\infty $, $N\in \bbN$  and $\alpha \in \bbR$. If $u\in \PH^p_{N,\alpha}(\ball)$
then $\bfL_{\theta} [u] \in \PH^p_{N,\alpha+p}(\ball)$.
\end{proposition}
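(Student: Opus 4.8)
The plan is to unpack the definition $\bfL_{\theta}[u] = (1-|x|^2)\Delta u + 4\theta \bfR[u] + 2\theta(n-2-2\theta) u$ and estimate the weighted $L^p$ norm of each of the three terms separately, showing that each lands in $\PH^p_{N,\alpha+p}(\ball)$. First I would record that $\bfL_{\theta}[u]$ is indeed $N$-harmonic: this is immediate, since $\Delta u$, $\bfR[u]$ and $u$ are all $N$-harmonic (the Laplacian lowers the polyharmonic order but certainly preserves $N$-harmonicity, and the radial derivative $\bfR[u] = x\cdot\nabla u$ is a combination of the first partials $\partial_j u$, each of which is $N$-harmonic by Lemma \ref{lem:Pav97}, multiplied by the smooth functions $x_j$). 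So the only real content is the membership in $L^p_{\alpha+p}(\ball)$.

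For the integrability, I would treat the terms one at a time. The term $2\theta(n-2-2\theta)u$ is a constant multiple of $u\in \PH^p_{N,\alpha}(\ball)\subseteq \PH^p_{N,\alpha+p}(\ball)$, the inclusion holding because $\alpha+p\geq\alpha$ and the weight $(1-|x|^2)$ is bounded on $\ball$; this term is harmless. For the first-derivative term, Lemma \ref{lem:Pav97} gives $\partial_j u\in \PH^p_{N,\alpha+p}(\ball)$ for each $j$, and since $\bfR[u]=\sum_j x_j\partial_j u$ with $|x_j|\leq 1$, the triangle inequality in $L^p_{\alpha+p}$ (for $p\geq 1$) or the $p$-triangle inequality $\|f+g\|_{p,\alpha+p}^p \leq \|f\|_{p,\alpha+p}^p + \|g\|_{p,\alpha+p}^p$ (for $0<p<1$) yields $\bfR[u]\in \PH^p_{N,\alpha+p}(\ball)$. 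The term I expect to be the main obstacle is $(1-|x|^2)\Delta u$: here the weight picks up a factor $(1-|x|^2)^p$, so I want to show $(1-|x|^2)\Delta u\in L^p_{\alpha+p}$, i.e. $\int_{\ball}|\Delta u(x)|^p (1-|x|^2)^{\alpha+2p}\,dV(x)<\infty$. By Corollary \ref{cor:Pav97-2} with $k=1$, we have $\Delta u\in \PH^p_{N-1,\alpha+2p}(\ball)$, which is exactly this statement; so this term is controlled as well. (When $N=1$, $\Delta u\equiv 0$ and there is nothing to check.)

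Assembling the three estimates and using the appropriate triangle or $p$-triangle inequality to combine them gives
\[
\|\bfL_{\theta}[u]\|_{p,\alpha+p} \;\lesssim\; \|\Delta u\|_{p,\alpha+2p} + |\theta|\,\|\bfR[u]\|_{p,\alpha+p} + |\theta(n-2-2\theta)|\,\|u\|_{p,\alpha+p} \;<\;\infty,
\]
with the implicit constant depending on $p$, $n$, $N$ and $\theta$. The one subtlety worth flagging is the split into the cases $p\geq 1$ and $0<p<1$, where one must use the quasi-norm ($p$-triangle) inequality rather than genuine subadditivity; but this is routine and does not affect the conclusion. Thus $\bfL_{\theta}[u]\in \PH^p_{N,\alpha+p}(\ball)$, as claimed.
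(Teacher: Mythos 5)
Your treatment of the integrability is exactly the paper's argument: split $\bfL_{\theta}[u]$ into its three terms, control $(1-|x|^2)\Delta u$ via Corollary \ref{cor:Pav97-2}, control $\bfR[u]=\sum_j x_j\partial_j u$ pointwise by $\sum_j|\partial_j u|$ and invoke Lemma \ref{lem:Pav97}, and use the trivial inclusion $\PH^p_{N,\alpha}(\ball)\subset\PH^p_{N,\alpha+p}(\ball)$ for the zero-order term. That part is correct.

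The gap is in your justification that $\bfL_{\theta}[u]$ is $N$-harmonic. You assert that $\bfR[u]$ is $N$-harmonic because it is a combination of the $N$-harmonic functions $\partial_j u$ ``multiplied by the smooth functions $x_j$''; but multiplication by $x_j$ does not preserve $N$-harmonicity. For instance, with $u=x_1^2-x_2^2$ the single term $x_1\partial_1 u=2x_1^2$ is not harmonic, even though $\partial_1 u$ is. Only the full sum $\sum_j x_j\partial_j u$ is $N$-harmonic, and that requires a separate argument: the paper uses the commutation identity $\Delta^N\bfR=\bfR\Delta^N+2N\Delta^N$, which gives $\Delta^N\bfR[u]=(\bfR+2N)[\Delta^N u]=0$ at once (equivalently, $\bfR$ multiplies each homogeneous term of $u$ by its degree, so it respects the Almansi structure). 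The same issue recurs, more mildly, with the first term: what must be shown $N$-harmonic is $(1-|x|^2)\Delta u$, not $\Delta u$, and since multiplication by $(1-|x|^2)$ raises the polyharmonic order by one, you need that $\Delta u$ is $(N-1)$-harmonic (which Corollary \ref{cor:Pav97-2} does supply) rather than merely $N$-harmonic. Both repairs are short, but as written the harmonicity step does not go through.
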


\begin{proof}
Suppose $u\in \PH^p_{N,\alpha}(\ball)$. We show that each term on the right hand side of \eqref{eqn:Ltheta} belongs to $\PH^p_{N,\alpha+p}(\ball)$. First, by Corollary \ref{cor:Pav97-2}, we have $\Delta u \in \PH^p_{N-1,\alpha+2p}(\ball)$ and hence
$\bfM \Delta u \in \PH^p_{N,\alpha+p}(\ball)$.
Next, it is easy to check that $\Delta^N \bfR = \bfR \Delta^N + 2N \Delta^N$. So $\bfR[u]$ is $N$-harmonic.
It then follows from Lemma \ref{lem:Pav97} that $\bfR [u] \in \PH^p_{N,\alpha+p}(\ball)$.
We also have $u\in \PH^p_{N,\alpha+p}(\ball)$ because trivially $\PH^p_{N,\alpha}(\ball) \subset \PH^p_{N,\alpha+p}(\ball)$.
By linearity, we are done.
\end{proof}

\section{Proof of Theorem \ref{thm:mdfalmansi}}


\subsection*{Uniqueness}

It suffices to show that if
\begin{equation}\label{eqn:uniq0}
\sum_{j=0}^{N-1} \bfM^j[w_j] = 0
\end{equation}
with $w_j$ satisfying $\bfL_{N-j-1}[w_j]=0$, $j=0,\ldots, N-1$, then
all the functions $w_j$ vanish.

To prove this we proceed by induction on $N$. Clearly, when $N = 1$,
then \eqref{eqn:uniq0} just states that $w_0 = 0$, as needed.
For the induction step, assume the above assertion holds for
$N = N_0$.

Suppose now that
\begin{equation}\label{eqn:uniq1}
\sum_{j=0}^{N_0} \bfM^j[w_j] = 0
\end{equation}
with $w_j$ satisfying $\bfL_{N_0-j}[w_j]=0$, $j=0,\ldots, N_0$.
Applying $\bfL_{N_0}$ to both sides of \eqref{eqn:uniq1} and using the operator identity
\eqref{eqn:commu2}, we obtain
\[
\sum_{j=0}^{N_0} \big\{\bfM^j\bfL_{N_0-j}[w_j] + 4j(j-2N_0-1) \bfM^{j-1}[w_j]\big\} ~=~ 0.
\]
Since $\bfL_{N_0-j}[w_j]=0$, $j=0,\ldots, N_0$, after setting $\widetilde{w}_j := (j+1)(j-2N_0) w_{j+1}$, the equation becomes
\[
\sum_{j=0}^{N_0-1} \bfM^j[\widetilde{w}_j] = 0.
\]
By the induction hypothesis, we have that $\widetilde{w}_j =0$ for all $j=0,\ldots, N_0-1$.
As a consequence, $w_j =0$ for all $j=1,\ldots, N_0$. In view of \eqref{eqn:uniq1}, this in turn implies
$w_0=0$. The uniqueness part of the theorem is proved.

\subsection*{Existence}

Again, we argue by induction on $N$.
The case $N = 1$ is trivial. For the induction step, assume the assertion of the theorem holds for
$N = N_0>1$.

Now, we suppose that $u$ is a $(N_0+1)$-harmonic function on $\ball$. Then
$\bfL_{N_0}[u]$ is $N_0$-harmonic on $\ball$, by Corollary \ref{cor:L_N-1action}.
Thus, by the induction hypothesis,
\[
\bfL_{N_0}[u] ~=~ \sum_{j=0}^{N_0-1} \bfM^j [v_j],
\]
with $v_{j}$ satisfying $\bfL_{N_0-j-1}[v_j]=0$ for $j=0,\ldots, N_0-1$.
Putting
\[
V ~:=~ \frac {1}{4}  \sum_{j=0}^{N_0-1} \frac {1}{(j+1)(2N_0 -j)} \bfM^{j+1} [v_j],
\]
we have
\begin{align*}
\bfL_{N_0} [u+V] ~=~& \sum_{j=0}^{N_0-1} \left\{ \bfM^j [v_j]+ \frac {1}{4(j+1)(2N_0 -j)} \bfL_{N_0}\bfM^{j+1} [v_j]\right\}\\
=~& \sum_{j=0}^{N_0-1} \bigg\{ \bfM^j [v_j]+ \frac {1}{4(j+1)(2N_0 -j)} \Big(\bfM^{j+1}\bfL_{N_0-j-1} [v_j]\\
& \qquad -4(j+1)(2N_0-j)\bfM^j [v_j] \Big) \bigg\}\\
=~& 0,
\end{align*}
where we used the operator identity \eqref{eqn:commu2} and that $\bfL_{N_0-j-1}[v_j]=0$ for $j=0,\ldots, N_0-1$.
We now define
\begin{align*}
w_0 ~:=~&u+V,\\
w_j ~:=~& - \frac {1}{4j(2N_0 -j+1)}\, v_{j-1}, \quad j=1,\ldots, N_0.
\end{align*}
Then $w_j$ satisfies $\bfL_{N_0-j}[w_j]=0$ for $j=0,1,\ldots, N_0$, and hence
\[
u ~=~ w_0 - V ~=~ \sum_{j=0}^{N_0} \bfM^j [w_j]
\]
is the modified Almansi representation of $u$. This completes the proof.

\section{Proof of Theorem \ref{thm:cellular}}

With Theorem \ref{thm:mdfalmansi} at hand, it remains to show that each term $\bfM^j[w_j]$ in
\eqref{eqn:celluardecomp} is in the space $\mathrm{PH}_{N,\alpha}^{p}(\ball)$.

We first show that $\bfM^{N-1}[w_{N-1}]  \in \PH_{N,\alpha}^p (\ball)$.
%
%
For any fixed $k\in \{1,\ldots, N-1\}$, it follows from
\eqref{eqn:celluardecomp} and \eqref{eqn:iterated} that
\begin{align}\label{eqn:recover_w_j}
\bfL_{N-k}\cdots \bfL_{N-1} [u] ~=~& \sum_{j=0}^{N-1} \bfL_{N-k}\cdots \bfL_{N-1} [\bfM^j [w_j]] \\
~=~& \sum_{j=0}^{N-1} 4^k (j-k+1)_k (j-2N+1)_k \bfM^{j-k} [w_j] \notag\\
~=~& \sum_{j=k}^{N-1} 4^k (j-k+1)_k (j-2N+1)_k \bfM^{j-k} [w_j]. \notag
\end{align}
In particular, taking $k=N-1$, this leads to
\[
\bfL_{1}\cdots \bfL_{N-1} [u] ~=~ (-4)^{N-1} (N-1)! N! \, w_{N-1}.
\]
Thus, repeated application of Proposition \ref{prop:mappingprpty} yields that $w_{N-1} \in \PH_{N,\alpha + (N-1)p}^p (\ball)$.
But $w_{N-1}$ satisfies $\bfL_{0}[w_{N-1}]=0$, i.e., $w_{N-1}$ is harmonic on $\ball$.
Hence in fact $w_{N-1}  \in \PH_{1,\alpha + (N-1)p}^p (\ball)$, which in turn implies $\bfM^{N-1}[w_{N-1}]  \in \PH_{N,\alpha}^p (\ball)$,
in view of Corollary \ref{cor:LthetaandNharmonic}.

Now we put
\[
v ~:=~ u-\bfM^{N-1}[w_{N-1}].
\]
Note that $v\in \PH_{N,\alpha}^p (\ball)$ and
\[
v = \sum_{j=0}^{N-2} \bfM^{j}[w_j].
\]
For fixed $k\in \{1,\ldots,N-2\}$, we proceed in the same way as for \eqref{eqn:recover_w_j} to obtain
\[
\bfL_{N-k}\cdots \bfL_{N-1} [v] ~=~ \sum_{j=k}^{N-2} 4^k (j-k+1)_k (j-2N+1)_k \bfM^{j-k} [w_j].
\]
When $k=N-2$, this reads
\[
\bfL_{2}\cdots \bfL_{N-1} [v] ~=~ (-4)^{N-2} (N-2)! \frac {(N+1)!}{3!} \,w_{N-2}.
\]
Then, by arguments similar to those for $w_{N-1}$, we obtain $\bfM^{N-2}[w_{N-2}]\in \PH_{N,\alpha}^p (\ball)$.


Continuing inductively in this manner, we find that $\bfM^{j}[w_{j}]\in \PH_{N,\alpha}^p (\ball)$ for $j=N-3,
\ldots,0$, and the proof is complete.


\section{Proof of Theorem \ref{thm:main2}: Part 1}

When $n\geq 3$, the formula \eqref{eqn:crit-type2} follows immediately from \eqref{eqn:crit-type}.
So, we only prove \eqref{eqn:crit-type}. For convenience, we divide the proof into two separate theorems.

\begin{theorem}\label{thm:main2refml}
Suppose that $0 < p < \infty$, $N\in \bbN$ and $\alpha$ is real. Then
\[
\PH_{N,\alpha}^{p}(\ball)=\{0\} \quad \Longrightarrow \quad \alpha \leq \min_{j:0\leq j \leq N} b_{j,N}(p).
\]
\end{theorem}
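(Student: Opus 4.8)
The plan is to argue by contraposition: assuming $\alpha > \min_{0\le j\le N} b_{j,N}(p)$, I will exhibit a nonzero element of $\PH_{N,\alpha}^{p}(\ball)$. The minimum is attained at some index $j^{\ast}$, so it suffices to construct, for each $j\in\{0,\ldots,N\}$, a nonzero $N$-harmonic function $u_j$ lying in $L_{\alpha}^{p}(\ball)$ for every $\alpha > b_{j,N}(p)$; specializing to $j=j^{\ast}$ then yields $\PH_{N,\alpha}^{p}(\ball)\neq\{0\}$.

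For the extremal functions I would take, with $\zeta\in\sphere$ fixed,
\[
u_0 := \bfM^{N-1}[\Phi_0] = (1-|x|^2)^{N-1}, \qquad u_j := \bfM^{N-j}\big[P_{j-1}(\cdot,\zeta)\big]\ \ (1\le j\le N).
\]
Their $N$-harmonicity is immediate from the results already established: $(1-|x|^2)^{N-1}$ is a polynomial of degree $2N-2$, hence killed by $\Delta^N$; and since $P_{j-1}(\cdot,\zeta)$ solves $\bfL_{j-1}[\,\cdot\,]=0$ by Lemma~\ref{lem:LPoisson}, an application of Corollary~\ref{cor:LthetaandNharmonic} with multiplier index $N-j$ (noting $\bfL_{N-(N-j)-1}=\bfL_{j-1}$) shows that $u_j$ is $N$-harmonic. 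Each $u_j$ is strictly positive, hence nonzero.

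The substance of the argument lies in the weighted norm estimates. Writing out the Poisson kernel gives
\[
u_j(x) = C_{j-1}\,\frac{(1-|x|^2)^{N+j-1}}{|x-\zeta|^{n+2j-2}},
\]
so that $\|u_j\|_{p,\alpha}^{p}$ is a constant multiple of
\[
I(a,b) := \int\limits_{\ball} \frac{(1-|x|^2)^{a}}{|x-\zeta|^{b}}\,dV(x), \qquad a=(N+j-1)p+\alpha,\quad b=(n+2j-2)p.
\]
I would establish the elementary fact that, for $\zeta\in\sphere$, one has $I(a,b)<\infty$ as soon as $a>-1$ and $b<n+a$. Here the condition $a>-1$ reads exactly $\alpha > -1-(N+j-1)p$, while $b<n+a$ simplifies, after cancellation, to $\alpha > -n-(N-j-n+1)p$; jointly these two inequalities say precisely $\alpha > b_{j,N}(p)$. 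For $u_0$ there is no boundary singularity, and finiteness of $\int_{\ball}(1-|x|^2)^{(N-1)p+\alpha}\,dV$ holds exactly when $\alpha > -1-(N-1)p = b_{0,N}(p)$.

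The main obstacle is the finiteness criterion for $I(a,b)$ near $\zeta$, where the boundary decay $(1-|x|^2)^{a}$ competes with the singularity $|x-\zeta|^{-b}$. I would treat it by localizing near $\zeta$ and introducing the coordinates $\rho=1-|x|$ and $\gamma=\angle(x/|x|,\zeta)$, using $1-|x|^2\asymp\rho$, $|x-\zeta|^2\asymp\rho^2+\gamma^2$, and $dV\asymp\gamma^{n-2}\,d\rho\,d\gamma\,d\omega$; the polar substitution $(\rho,\gamma)=(s\cos\phi,\,s\sin\phi)$ then reduces convergence to the one-dimensional exponent condition $a-b+n-1>-1$, i.e. $b<n+a$, with $a>-1$ arising from the angular integral. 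This computation is routine but slightly delicate, and alternatively one may quote a standard Forelli--Rudin-type integral lemma. Once $I(a,b)<\infty$ is secured for all $\alpha > b_{j,N}(p)$, the proof closes: any $\alpha>\min_j b_{j,N}(p)$ makes $u_{j^{\ast}}$ a nonzero member of $\PH_{N,\alpha}^{p}(\ball)$, establishing the contrapositive.
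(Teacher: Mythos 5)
Your proposal is correct and follows essentially the same route as the paper: the test functions $u_j$ are exactly the paper's $U_{j,N}$ (up to the constant $C_{j-1}$), the $N$-harmonicity argument via Lemma~\ref{lem:LPoisson} and Corollary~\ref{cor:LthetaandNharmonic} is identical, and the exponent bookkeeping matches $b_{j,N}(p)$ precisely. The only difference is in the integral lemma, where the paper evaluates $I(a,b)$ exactly by expanding in a hypergeometric series and applying Gauss's summation formula, while you obtain the (sufficient, and all that is needed here) convergence criterion $a>-1$, $b<n+a$ by a local polar-coordinate estimate near the boundary singularity; both are sound.
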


\begin{theorem}\label{thm:main2refm2}
Suppose that $N\in \bbN$, $\frac{n-2}{n-1}\leq  p < \infty$ and $\alpha$ is real. Then
\[
\alpha \leq \min_{j:0\leq j \leq N} b_{j,N}(p) \quad \Longrightarrow \quad \PH_{N,\alpha}^{p}(\ball)=\{0\}.
\]
\end{theorem}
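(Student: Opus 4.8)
The plan is to use the Structure Theorem to collapse the problem onto a single second-order equation, and then to analyze that equation by separating the ``smooth'' from the ``concentrated'' boundary data. By Theorem \ref{thm:main1}, any $u\in\PH_{N,\alpha}^{p}(\ball)$ decomposes uniquely as $u=\sum_{j=0}^{N-1}\bfM^{j}[w_{j}]$ with $\bfM^{j}[w_{j}]\in\PH_{N,\alpha}^{p}(\ball)$ and $w_{j}$ solving $\bfL_{N-j-1}[w_{j}]=0$, and moreover $w_{j}\in\PH_{N-j,\alpha+jp}^{p}(\ball)$; by uniqueness it suffices to show every $w_{j}$ vanishes. Setting $M:=N-j$, the function $w_{j}$ is $M$-harmonic (Corollary \ref{cor:LthetaandNharmonic}), solves the single second-order equation $\bfL_{M-1}[w_{j}]=0$, and lies in $L^{p}_{\alpha+jp}(\ball)$. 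The elementary identity $b_{l,N}(p)+jp=b_{l,N-j}(p)$, read off from \eqref{eqn:bjNp}, together with $\alpha\le\min_{0\le l\le N}b_{l,N}(p)\le a_{M,N}(p)$, shows that $\gamma:=\alpha+jp$ obeys $\gamma\le a_{M,M}(p)=\min\{b_{M,M}(p),-1\}$. Everything therefore reduces to the following claim, to be proved for each $M\ge 1$: \emph{if $u$ is $M$-harmonic with $\bfL_{M-1}[u]=0$ and $u\in L^{p}_{\gamma}(\ball)$ with $\gamma\le\min\{b_{M,M}(p),-1\}$, then $u\equiv 0$.}

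For this reduced claim I would use that $\theta:=M-1>-1/2$ and expand $u$ in spherical harmonics, $u=\sum_{m,k}c_{m,k}\,r^{m}F_{m}(|x|^{2})\,Y_{m,k}$, where $F_{m}(z)=\hyperg{-\theta}{\frac{n}{2}+m-1-\theta}{\frac{n}{2}+m}{z}$ terminates into a polynomial (since $-\theta$ is a non-positive integer) with $F_{m}(1)\neq 0$; the case $m=0$ is Lemma \ref{lem:Phi_theta}. Thus every nonzero mode has a nonvanishing boundary value and, on its own, belongs to $L^{p}_{\gamma}$ only for $\gamma>-1$. When $p\ge 1$, which is exactly the range in which $a_{M,M}(p)=-1$, this finishes the proof: projecting $u(r\,\cdot)$ onto a single $Y_{m_{0},k_{0}}$ and applying H\"older's inequality yields $\|u(r\,\cdot)\|_{L^{p}(\sphere)}\gtrsim|c_{m_{0},k_{0}}|\,|F_{m_{0}}(1)|$ for $r$ near $1$ whenever that coefficient is nonzero, so the divergence of $\int^{1}(1-r^{2})^{\gamma}\,dr$ for $\gamma\le -1$ contradicts $u\in L^{p}_{\gamma}(\ball)$ and forces every $c_{m,k}=0$.

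The genuine difficulty, which I expect to be the main obstacle, is the range $\frac{n-2}{n-1}\le p<1$, where $a_{M,M}(p)=b_{M,M}(p)<-1$ and the projection argument collapses because the conjugate exponent is negative; here each single mode is excluded trivially, yet an infinite superposition can concentrate near one boundary point and survive in $L^{p}_{\gamma}$ for some $\gamma<-1$. The extremal such solution is the $\theta$-Poisson kernel $P_{M-1}(\cdot,\zeta_{0})=C_{\theta}(1-|x|^{2})^{2M-1}|x-\zeta_{0}|^{-(2M+n-2)}$ of Lemma \ref{lem:LPoisson}, for which the standard weighted integral $\int_{\ball}(1-|x|^{2})^{(2M-1)p+\gamma}|x-\zeta_{0}|^{-(2M+n-2)p}\,dV$ is finite precisely when $\gamma>b_{M,M}(p)$. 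What remains is therefore a sharp, scale-uniform estimate showing that $L^{p}_{\gamma}$-integrability at the exponent $\gamma\le b_{M,M}(p)$ forbids any nonzero boundary mass, even of point-mass type, and so forces $u\equiv 0$. This is exactly the step in which the hypothesis $p\ge\frac{n-2}{n-1}$ is indispensable: it secures the subharmonicity of $|\nabla h|^{p}$ for harmonic $h$, which underlies the gradient estimate of Lemma \ref{lem:Pav97} and the pointwise bound of Lemma \ref{lem:ptwsest}, the substitutes for the complex-analytic machinery available only when $n=2$, and it is these estimates that let one transfer the integral decay of $u$ into the sharp pointwise control near $\zeta_{0}$ needed to eliminate the concentrated solutions.
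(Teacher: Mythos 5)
Your reduction to the single-equation claim is exactly the paper's: via Theorem \ref{thm:main1} and the identity $a_{M,N}(p)+jp=a_{M,M}(p)$ you arrive at the same statement as the Claim inside Proposition \ref{lem:prop71inBH}, and your spherical-harmonics argument for $p\geq 1$ (terminating hypergeometric radial parts with $F_m(1)\neq 0$, projection plus H\"older, divergence of $\int^1(1-r)^{\gamma}\,dr$ for $\gamma\leq -1$) is a correct, and in fact somewhat more direct, substitute for the paper's Case 2 of Proposition \ref{prop:prop411inBH}. But for $\frac{n-2}{n-1}\leq p<1$ you do not give a proof: you correctly diagnose that single modes are excluded trivially while concentrated superpositions (the $\theta$-Poisson kernels) are the real enemy, and then state that ``what remains is a sharp, scale-uniform estimate'' forbidding nonzero boundary mass. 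That remaining step is the substance of the theorem in this range; identifying the obstacle is not the same as overcoming it, so the proposal has a genuine gap precisely where the result is hardest.

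For comparison, here is how the paper closes that gap, with two separate mechanisms. First (Proposition \ref{prop:prop411inBH}, Case 1), for $0<p<1$ and $\gamma\leq (n-1)p-n$ one combines the pointwise bound of Lemma \ref{lem:ptwsest} with the interpolation
\begin{equation*}
\|u\|_{1,-1}\;\leq\;\|u\|_{p,(n-1)p-n}^{p}\Bigl(\sup_{x\in\ball}|u(x)|^{p}(1-|x|^{2})^{(n-1)p}\Bigr)^{(1-p)/p}<\infty ,
\end{equation*}
which yields $\liminf_{r\to 1^-}\int_{\sphere}|u(r\zeta)|\,d\sigma=0$; feeding this into the harmonic Poisson integral applied to the Almansi representation kills the leading Almansi term $v_0$, so $u=\bfM[\widetilde u]$ with $\widetilde u$ of one lower order. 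One then concludes not by iterating this estimate but by the uniqueness in Theorem \ref{thm:main1}: a solution $w$ of $\bfL_{N'-1}[w]=0$ is its own $j=0$ cell, while $\bfM[\widetilde u]$ decomposes into cells with $j\geq 1$, forcing $w=0$. Second (Proposition \ref{prop:prop412inBH}), the residual low range $\frac{n-2}{n-1}\leq p<\frac{n-1}{n}$ (where $a_{1,1}(p)=-1-p$, not $(n-1)p-n$) is handled by the Stein--Weiss subharmonicity of $|\nabla u|^{p}$, which makes $r\mapsto\int_{\sphere}|\nabla u(r\zeta)|^{p}d\sigma$ increasing and shows $\|\nabla u\|_{p,-1}<\infty$ forces $u$ constant. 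Your proposal names these ingredients (Lemmas \ref{lem:ptwsest} and \ref{lem:Pav97}, the subharmonicity) but does not assemble them; in particular the interpolation trick converting $L^{p}_{\gamma}$ decay into $L^{1}_{-1}$ decay, the extraction of the factor $\bfM$ via the Almansi representation, and the appeal to uniqueness of the cellular decomposition are all missing, and the case split at $p=\frac{n-1}{n+2N'-2}$ is not addressed.
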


Note that even for $n\geq 3$ we do not require that $p \geq \frac{n-2}{n-1}$ in Theorem \ref{thm:main2refml}.

This section is devoted to the proof of Theorem \ref{thm:main2refml}. Theorem \ref{thm:main2refm2} will be proved in Section 6.

Given $N\in \bbN$ and $j\in \{1,\ldots, N\}$, let
\begin{equation}\label{eqn:UjN}
U_{j,N}(x) ~:=~ \frac {(1-|x|^{2})^{N+j-1}} {|x-e_{1}|^{n+2(j-1)}}, \qquad x\in \ball,
\end{equation}
where $e_1=(1,0,\cdots,0)$ is the first coordinate vector in $\bbR^n$, while for $j=0$ we put
\[
U_{0,N}(x) ~:=~ (1-|x|^2)^{N-1}.
\]


\begin{lemma}
For $N = 1, 2, 3, \ldots$ and $j = 0, 1, \ldots, N$, the functions $U_{j,N}$ are all
$N$-harmonic in $\ball$.
\end{lemma}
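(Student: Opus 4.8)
The plan is to show that each $U_{j,N}$ solves a first-order reduction $\bfL_{\theta}[u]=0$ for an appropriate parameter $\theta$, and then invoke Corollary \ref{cor:LthetaandNharmonic} to conclude $N$-harmonicity. The guiding observation is that the three families $U_{0,N}$, $U_{j,N}$ for $1\leq j\leq N-1$, and $U_{N,N}$ are each built from the two elementary solutions of $\bfL_{\theta}[u]=0$ already identified in the excerpt, namely the $\theta$-Poisson kernel $P_{\theta}(\cdot,e_1)$ from Lemma \ref{lem:LPoisson} and (for the purely radial case) the hypergeometric solution $\Phi_{\theta}$ from Lemma \ref{lem:Phi_theta}, combined with the multiplier operators $\bfM^{\lambda}$.

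First I would treat the generic case $1\leq j\leq N$. Comparing \eqref{eqn:UjN} with the definition \eqref{eqn:poisson} of $P_{\theta}(x,\zeta)$, I read off that the choice $1+2\theta = N+j-1$ and $n+2\theta = n+2(j-1)$ must be made compatible. The second relation forces $\theta = j-1$, and then $1+2\theta = 2j-1$, so that
\[
U_{j,N} = (1-|x|^2)^{N-j}\,\frac{(1-|x|^2)^{2j-1}}{|x-e_1|^{n+2(j-1)}} = \bfM^{N-j}\!\left[C_{j-1}^{-1}P_{j-1}(\cdot,e_1)\right].
\]
Thus $U_{j,N}$ is, up to a constant, $\bfM^{N-j}$ applied to a function $w$ solving $\bfL_{j-1}[w]=0$. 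To match the hypothesis of Corollary \ref{cor:LthetaandNharmonic}, which requires $\bfL_{N-(N-j)-1}[w]=\bfL_{j-1}[w]=0$ with the multiplier exponent $N-j$, I check that indeed $j-1 = N-(N-j)-1$, so the corollary applies with its index "$j$" equal to $N-j$, yielding that $\bfM^{N-j}[w]$ is $N$-harmonic. This settles $1\leq j\leq N$.

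For $j=0$ the function is the purely radial $U_{0,N}=(1-|x|^2)^{N-1}=\bfM^{N-1}[\mathbf{1}]$, and since the constant function $\mathbf{1}$ trivially solves $\bfL_{0}[\mathbf{1}]=0$ (check directly from \eqref{eqn:Ltheta}: $\Delta$ and $\bfR$ annihilate constants and the zeroth-order coefficient vanishes at $\theta=0$), Corollary \ref{cor:LthetaandNharmonic} with multiplier exponent $N-1$ gives that $\bfM^{N-1}[\mathbf{1}]$ is $N$-harmonic. Alternatively one notes $\mathbf{1}=\Phi_0$ and uses Lemma \ref{lem:Phi_theta}. The only genuine point requiring care is the bookkeeping of indices when invoking Corollary \ref{cor:LthetaandNharmonic}: its statement reduces $\bfL_{N-k-1}[u]=0$ to $N$-harmonicity of $\bfM^{k}[u]$, so one must verify that the parameter of the Poisson kernel and the power of $\bfM$ add up correctly, i.e.\ that $\theta=j-1$ pairs with the exponent $k=N-j$ to satisfy $\theta=N-k-1$. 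This is the step I expect to be the main (though modest) obstacle, and it is purely a matter of matching the constraints $1+2\theta=N+j-1$ and $n+2\theta=n+2(j-1)$ against the index convention of the corollary; once the identification $U_{j,N}\propto \bfM^{N-j}[P_{j-1}(\cdot,e_1)]$ is confirmed, the result follows immediately.
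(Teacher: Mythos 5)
Your proof is correct and follows essentially the same route as the paper's: identify $U_{j,N}$ (up to the constant $C_{j-1}$) with $\bfM^{N-j}[P_{j-1}(\cdot,e_1)]$, invoke Lemma \ref{lem:LPoisson} to get $\bfL_{j-1}[P_{j-1}(\cdot,e_1)]=0$, and apply Corollary \ref{cor:LthetaandNharmonic} with multiplier exponent $N-j$; your index bookkeeping $\theta=j-1=N-(N-j)-1$ is exactly right. The paper simply dismisses $j=0$ as "clear" (a polynomial in $|x|^2$ of the right degree), whereas you route it through $\bfL_0[\mathbf{1}]=0$ and the same corollary — a harmless variation.
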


\begin{proof}
The function $U_{0,N}$ is clearly $N$-harmonic in $\ball$. For $j\in \{1,\ldots, N\}$,
note that $U_{j,N}=\bfM^{N-j}[P_{j-1}(\cdot, e_1)]$, where $P_{\theta}$ is defined as in \eqref{eqn:poisson}.
By Lemma \ref{lem:LPoisson}, $P_{j-1}(\cdot, e_1)$ solves $\bfL_{j-1}[u]=0$. Hence, by Corollary \ref{cor:LthetaandNharmonic},
$U_{j,N}$ is $N$-harmonic in $\ball$.
\end{proof}


\begin{lemma}\label{lem:finiteintgl}
Let $a,b \in \bbR$. The integral
\[
I(a,b) := \int\limits_{\ball} \frac {(1-|x|^2)^a} {|x-e_{1}|^{n+a+b}} dV(x)
\]
is finite if and only if $a>-1$ and $b<0$. Moreover, if $a > -1$ and $b < 0$ then
\[
I(a,b) ~=~ \frac {\pi^{n/2}\Gamma(1+a)\Gamma(-b)}{\Gamma\big((n+a-b)/2\big)\Gamma \big((2+a-b)/2\big)}.
\]
\end{lemma}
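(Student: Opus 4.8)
The plan is to reduce the $n$-dimensional integral to a one-dimensional integral in the radial variable by integrating out the angular part, and then to recognize the resulting one-dimensional integral as a Beta-type integral that evaluates to a ratio of Gamma functions. First I would pass to spherical-polar coordinates $x = r\zeta$ with $r \in (0,1)$ and $\zeta \in \sphere$, writing
\[
I(a,b) ~=~ \int_0^1 (1-r^2)^a r^{n-1} \left( \int\limits_{\sphere} \frac{d\sigma(\zeta)}{|r\zeta - e_1|^{n+a+b}} \right) dr.
\]
The inner surface integral depends only on $r = |x|$ by rotational symmetry, so it is a function $S(r)$ of $r$ alone. The key analytic input is the classical formula for the spherical average of $|r\zeta - e_1|^{-s}$: since $|r\zeta - e_1|^2 = 1 - 2r(\zeta\cdot e_1) + r^2$, the average over $\sphere$ of a power of $1 - 2r\,t + r^2$ (with $t = \zeta \cdot e_1$) can be expressed via a Gegenbauer/hypergeometric integral, and in fact $S(r) = \hyperg{\frac{n+a+b}{2}}{\frac{2+a+b}{2}}{\frac n2}{r^2}$ up to the correct normalizing constant coming from the measure of $\sphere$.

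Next I would substitute this hypergeometric expression for $S(r)$ and make the change of variable $t = r^2$, reducing $I(a,b)$ to
\[
I(a,b) ~=~ \tfrac12 \int_0^1 (1-t)^a\, t^{\frac n2 - 1}\, \hyperg{\tfrac{n+a+b}{2}}{\tfrac{2+a+b}{2}}{\tfrac n2}{t}\, dt.
\]
This is an Euler-type integral of a hypergeometric function against a power weight, which is evaluated by the classical Euler integral formula for $\hyperg{a'}{b'}{c'}{1}$ (Gauss's summation theorem) after interchanging the series and the integral. Concretely, expanding the ${}_2F_1$ as a power series in $t$, integrating termwise using the Beta integral $\int_0^1 (1-t)^a t^{\frac n2 - 1 + k}\,dt = B(\tfrac n2 + k,\, a+1)$, and resumming, one obtains a ${}_2F_1$ evaluated at $1$, which Gauss's theorem collapses to the stated ratio of four Gamma functions. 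The convergence conditions fall out naturally here: the integrability in $r$ near $r=0$ is automatic from $r^{n-1}$, near $r=1$ one needs $a > -1$ for the factor $(1-r^2)^a$, and the behaviour of $S(r)$ as $r \to 1^-$ (equivalently the nonsingularity at the boundary point $e_1$) forces $b < 0$ so that the hypergeometric series converges at $t=1$ and Gauss's theorem applies.

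The main obstacle I anticipate is pinning down the exact constant and the precise hypergeometric parameters in the spherical-average formula $S(r)$, and justifying the termwise integration and the application of Gauss's summation theorem only in the regime $a>-1$, $b<0$. One must check that the hypergeometric parameters $c' - a' - b' = \tfrac n2 - \tfrac{n+a+b}{2} - \tfrac{2+a+b}{2} = -(a+b+1)$ combine correctly with the Beta-integral parameters so that Gauss's condition $\RePt(c'-a'-b')>0$ is compatible with the stated range; the cleanest route is probably to verify convergence separately (the integral is a nonnegative integrand, so $I(a,b)<\infty$ iff the iterated integral converges, and one checks the boundary asymptotics directly) and then to perform the exact evaluation under the convergence hypothesis. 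Careful bookkeeping of the normalization of $d\sigma$ (chosen so that $\sigma(\sphere)=1$) versus the unnormalized surface measure will be needed to land precisely on the constant $\pi^{n/2}\Gamma(1+a)\Gamma(-b)/[\Gamma((n+a-b)/2)\Gamma((2+a-b)/2)]$.
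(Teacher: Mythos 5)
Your proposal follows essentially the same route as the paper: polar coordinates, the spherical-average formula $\int_{\sphere}|y-\zeta|^{-2t}\,d\sigma(\zeta)=\hyperg{t}{t-\frac n2+1}{\frac n2}{|y|^2}$ from \cite{LP04}, termwise Beta integration of the hypergeometric series, and Gauss's summation theorem (whose condition $\RePt(\gamma-\alpha-\beta)=-b>0$ indeed resolves exactly to $b<0$ after the Beta integral shifts the third parameter to $\frac n2+1+a$). The only cosmetic difference is that the paper establishes the necessity of $b<0$ by a Stirling estimate on the coefficients of the resummed series rather than by boundary asymptotics, but this is the same argument in substance.
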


\begin{proof}
We first recall the following formula (see \cite[lemma 2.1]{LP04}):
\[
\int\limits_{\sphere} \frac {d\sigma(\zeta)}{|y-\zeta|^{2 t}} ~=~ \hyperg {t}{t-\frac {n}{2}+1}{\frac {n}{2}} {|y|^2},
\qquad y\in \ball,
\]
where $t$ is a real parameter.
By integrating in polar coordinates and using the above formula, we find that
\begin{align*}
I(a,b) ~=~& \omega_{n-1} \int\limits_{0}^{1} r^{n-1} (1-r^2)^a
\left\{ \int\limits_{\sphere} \frac {d\sigma(\zeta)}{|r e_1 -\zeta|^{n+a+b}}\right\} dr\\
~=~& \omega_{n-1} \int\limits_{0}^{1} r^{n-1} (1-r^2)^a
\hyperg {\frac {n+a+b}{2}} {\frac {2+a+b}{2}} {\frac {n}{2}} {r^2} dr\\
=~& \frac {\omega_{n-1}}{2} \sum_{j=0}^{\infty} \frac {\big((n+a+b)/2\big)_j
\big((2+a+b)/2\big)_j} {\big(n/2\big)_j (1)_j} \int\limits_0^1 r^{j+n/2-1}(1-r)^a dr,
\end{align*}
where $\omega_{n-1}:= 2\pi^{n/2}/\Gamma(n/2)$ stands for the area of the unit sphere $\sphere$.
For $a\leq -1$, we have $I(a, b) = +\infty$. For $a>-1$, we evaluate the (Beta) integral to obtain
\begin{equation}\label{eqn:Iab}
I(a,b) ~=~ \frac {\pi^{n/2}\Gamma(1+a)}{\Gamma(n/2+1+a)} \sum_{j=0}^{\infty} \frac {\big((n+a+b)/2\big)_j
\big((2+a+b)/2\big)_j} {\big(n/2+1+a\big)_j (1)_j}.
\end{equation}
Using the well-known Stirling formula
\[
\frac {\Gamma(j+t)} {\Gamma(j+s)} ~\sim~ j^{t-s} \quad \text{as}\quad  j \to +\infty,
\]
we find that the sum on the right-hand side of \eqref{eqn:Iab} converges if and only if
\[
\sum\limits_{j=1}^{\infty} j^{b-1} < \infty,
\]
if and only if $b<0$.

Now we assume that $a>-1$ and $b<0$. Then the sum on the right-hand side of \eqref{eqn:Iab} equals
\[
\hyperg {\frac {n+a+b}{2}} {\frac {2+a+b}{2}} {\frac {n}{2} +1+a} {1}
~=~ \frac {\Gamma(n/2+1+a) \Gamma(-b)}{\Gamma\big((n+a-b)/2\big)\Gamma \big((2+a-b)/2\big)},
\]
where we have used the well-known formula of Gauss
\begin{align*}
&\hyperg{\alpha}{\beta}{\gamma}{1} ~=~ \frac{\Gamma(\gamma)\Gamma(\gamma-\alpha-\beta)}
{\Gamma(\gamma-\alpha)\Gamma(\gamma-\beta)}, \qquad \RePt(\gamma-\alpha-\beta)>0.
\end{align*}
This completes the proof.
\end{proof}

\begin{lemma}\label{lem:testtriviality} 
For each fixed $N\in \bbN$ and $j\in \{0,1,\ldots, N\}$, the function $U_{j,N}$ is in $\PH_{N,\alpha}^{p}(\ball)$ if and only if $\alpha >b_{j,N}(p)$.
\end{lemma}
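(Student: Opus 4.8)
The plan is to reduce the membership question entirely to the explicit integral computation of Lemma \ref{lem:finiteintgl}. Since the functions $U_{j,N}$ have already been shown to be $N$-harmonic in $\ball$, we have $U_{j,N}\in \PH_{N,\alpha}^p(\ball)$ precisely when
\[
\int\limits_{\ball} |U_{j,N}(x)|^p (1-|x|^2)^\alpha\, dV(x) < \infty,
\]
so it remains only to determine for which $\alpha$ this weighted integral converges.

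First I would treat the degenerate case $j=0$ directly. Here $|U_{0,N}(x)|^p (1-|x|^2)^\alpha = (1-|x|^2)^{(N-1)p+\alpha}$, and integrating in polar coordinates shows that $\int_{\ball} (1-|x|^2)^s\,dV(x)<\infty$ if and only if $s>-1$. Thus convergence holds exactly when $(N-1)p+\alpha>-1$, that is, $\alpha > -1-(N-1)p = b_{0,N}(p)$, as required.

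For $j\in\{1,\ldots,N\}$ the idea is to recognize the weighted integrand as an instance of the kernel in Lemma \ref{lem:finiteintgl}. Indeed,
\[
|U_{j,N}(x)|^p (1-|x|^2)^\alpha = \frac{(1-|x|^2)^{(N+j-1)p+\alpha}}{|x-e_1|^{(n+2(j-1))p}}.
\]
Setting $a := (N+j-1)p+\alpha$, matching the denominator exponent $n+a+b = (n+2(j-1))p$ forces $b = (n+j-1-N)p - n - \alpha$. Lemma \ref{lem:finiteintgl} then asserts that the integral is finite if and only if $a>-1$ and $b<0$, which upon rearranging read
\[
\alpha > -1-(N+j-1)p \qquad\text{and}\qquad \alpha > -n-(N-j-n+1)p.
\]

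Finally, since both constraints are lower bounds on $\alpha$, requiring them simultaneously is equivalent to demanding that $\alpha$ exceed their maximum, which is precisely $b_{j,N}(p)$ by the definition \eqref{eqn:bjNp}. This completes the characterization. The only point requiring care is purely algebraic: correctly identifying the parameters $a$ and $b$ in terms of $N, j, p, \alpha$, and verifying that $b<0$ translates exactly to the threshold $-n-(N-j-n+1)p$ (using $-(N-j-n+1)=n+j-1-N$). Once this bookkeeping is in place the result is immediate, so I do not anticipate any genuine obstacle beyond this verification.
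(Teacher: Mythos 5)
Your proof is correct and follows essentially the same route as the paper: treat $j=0$ by a direct polar-coordinate computation, and for $j\geq 1$ reduce the weighted norm to the integral $I(a,b)$ of Lemma \ref{lem:finiteintgl} and translate $a>-1$, $b<0$ into the two lower bounds whose maximum is $b_{j,N}(p)$. The parameter bookkeeping checks out, so there is nothing to add.
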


\begin{proof}
Clearly, $U_{0,N}\in \PH_{N,\alpha}^{p}(\ball)$ if and only if $(N-1)p+\alpha > -1$, which is exactly $\alpha > b_{0,N}(p)$.
For $j\in \{1,\ldots,N\}$, to decide when $U_{j,N}\in \PH_{N,\alpha}^{p}(\ball)$, we note that
\[
\|U_{j,N}\|_{p,\alpha}^{p} ~=~ \int\limits_{\ball} \frac {(1-|x|^2)^{(N+j-1)p+\alpha}} {|x-e_{1}|^{(n+2j-2)p}} dV(x),
\]
which is finite if and only if
\begin{equation}\label{eqn:condn1}
\begin{cases} (N+j-1)p+\alpha > -1,\\
(n+2j-2)p-n-(N+j-1)p-\alpha < 0,
\end{cases}
\end{equation}
in view of Lemma \ref{lem:finiteintgl}. The claim follows, since the condition \eqref{eqn:condn1} is exactly the same as $\alpha >b_{j,N}(p)$.
\end{proof}


Lemma \ref{lem:testtriviality} shows that if $\alpha$ satisfies
\[
\alpha > \min_{j:0\leq j\leq N} b_{j,N}(p),
\]
then
one of the functions $U_{0,N}, U_{1,N}, \ldots, U_{N,N}$ will be in $\mathrm{PH}_{N,\alpha}^p(\ball)$,
so that in particular, $\mathrm{PH}_{N,\alpha}^p(\ball) \neq \{0\}$.
This completes the proof of Theorem \ref{thm:main2refml}.

\section{Preliminaries for the proof of Theorem \ref{thm:main2refm2}}


The following result, which generalizes Proposition 4.11 in \cite{BH14},
provides us with condition that guarantees that an $N$-harmonic function $u(x)$ can be written as $(1-|x|^2) \widetilde{u}(x)$, where $\widetilde{u}$ is
$(N-1)$-harmonic.

\begin{proposition}\label{prop:prop411inBH}
Suppose that $0< p <\infty$, $\alpha \leq \min\{(n-1)p-n,-1\}$ and $N\in \bbN$.
Suppose $u\in \PH_{N,\alpha}^{p}(\ball)$.
\begin{enumerate}
\item[(i)]
If $N=1$, then $u = 0$;
\item[(ii)]
if $N\geq 2$ then $u$ has the form $u=\bfM[\widetilde{u}]$
with $\widetilde{u}\in \PH_{N-1,\alpha+p}^{p}(\ball)$.
\end{enumerate}
\end{proposition}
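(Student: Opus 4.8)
The plan is to reduce both parts to a single statement, namely that the \emph{boundary} harmonic component of $u$ vanishes, and to detect that component through spherical-harmonic test integrals. First I would invoke the rearranged Almansi representation \eqref{eqn:almansi}, writing
\[
u(x) = \sum_{j=0}^{N-1}(1-|x|^2)^j v_j(x),
\]
with each $v_j$ harmonic on $\ball$. The whole proof then rests on showing $v_0\equiv 0$. Indeed, for $N=1$ this is precisely $u=v_0=0$, which is (i); for $N\geq 2$ it lets me factor $u=\bfM[\widetilde u]$ with $\widetilde u:=\sum_{j=0}^{N-2}(1-|x|^2)^j v_{j+1}$, which is $(N-1)$-harmonic again by Almansi, and since $\bfM[\widetilde u]=u$ the change of weight $\norm{\widetilde u}_{p,\alpha+p}=\norm{u}_{p,\alpha}$ (moving one factor $1-|x|^2$ from the weight into the function) gives $\widetilde u\in\PH_{N-1,\alpha+p}^p(\ball)$, which is (ii).

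To show $v_0\equiv 0$, I fix a spherical harmonic $Y$ of degree $k$ and set $g(r):=\int_{\sphere}u(r\zeta)\overline{Y(\zeta)}\,d\sigma(\zeta)$ for $0<r<1$. Expanding each harmonic $v_j$ in spherical harmonics and using orthogonality of homogeneous parts of different degrees, I get
\[
g(r)=r^k\sum_{j=0}^{N-1}(1-r^2)^j\,\langle (v_j)_k,\,Y\rangle,
\]
where $(v_j)_k$ is the degree-$k$ part of $v_j$. As $r\to 1^-$ every term with $j\geq 1$ dies, so $g(r)\to\langle (v_0)_k,Y\rangle$. Hence it suffices to prove $g(r)\to 0$; choosing $Y=(v_0)_k$ then forces $(v_0)_k=0$ for all $k$, i.e. $v_0\equiv 0$.

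The crux is thus to establish $g(r)\to 0$, which I do by contradiction: if $\lim_{r\to1}g(r)\neq 0$, then $|g(r)|\geq\delta>0$ on some interval $(r_0,1)$, and I will deduce $\norm{u}_{p,\alpha}=\infty$. Writing $M_q(r)^q:=\int_{\sphere}|u(r\zeta)|^q\,d\sigma$, polar coordinates give $\norm{u}_{p,\alpha}^p\approx\int_0^1(1-r^2)^\alpha M_p(r)^p\,dr$. When $p\geq 1$ one has $\min\{(n-1)p-n,-1\}=-1$, so $\alpha\le -1$, and Hölder yields $\delta\le|g(r)|\le\norm{Y}_{L^{p'}(\sigma)}M_p(r)$; thus $M_p(r)\gtrsim 1$ near the boundary and $\int_{r_0}^1(1-r^2)^\alpha\,dr=+\infty$, a contradiction.

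The genuinely delicate case, and the main obstacle, is $0<p<1$: here $|u|^p$ need not be subharmonic and Hölder in the above form is unavailable, which is exactly where the stronger constraint $\alpha\le(n-1)p-n$ must be exploited. I would combine the crude bound $\delta\le|g(r)|\le\norm{Y}_\infty M_1(r)$ with the pointwise estimate of Lemma \ref{lem:ptwsest}, $|u(r\zeta)|\lesssim(1-r^2)^{-(n+\alpha)/p}=:A(r)$. Since $p-1<0$, the pointwise inequality $|u|^p\geq A(r)^{p-1}|u|$ holds on $\sphere$, so integrating gives $M_p(r)^p\geq A(r)^{p-1}M_1(r)\gtrsim(1-r^2)^{-(n+\alpha)(p-1)/p}$, whence
\[
\norm{u}_{p,\alpha}^p\gtrsim\int_{r_0}^1(1-r^2)^{\alpha-(n+\alpha)(p-1)/p}\,dr=\int_{r_0}^1(1-r^2)^{[n(1-p)+\alpha]/p}\,dr.
\]
The exponent $[n(1-p)+\alpha]/p$ is $\le -1$ precisely when $\alpha\le(n-1)p-n$, so the integral diverges and we reach the desired contradiction. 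The whole difficulty of the proposition is concentrated in this last step: one trades the missing subharmonicity of $|u|^p$ for the pointwise growth bound, and tracking the exponents is exactly what pins down the sharp threshold $(n-1)p-n$.
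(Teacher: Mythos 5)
Your proof is correct and follows essentially the same route as the paper: the rearranged Almansi representation \eqref{eqn:almansi}, combined with the pointwise estimate of Lemma \ref{lem:ptwsest} traded against the $L^p_\alpha$-integrability (the step $|u|^p\geq A(r)^{p-1}|u|$ for $0<p<1$ is exactly the paper's interpolation showing $\|u\|_{1,-1}<\infty$), pinning down the same threshold $(n-1)p-n$ and then factoring out $1-|x|^2$. The only cosmetic difference is that you detect the boundary component $v_0$ through spherical-harmonic coefficients $g(r)$, whereas the paper first proves $\liminf_{r\to 1^-}\int_{\sphere}|u(r\zeta)|\,d\sigma(\zeta)=0$ and then recovers $v_0$ as a limit of Poisson integrals of $u(r\cdot)$.
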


\begin{proof}
We first show that
\begin{equation}\label{eqn:liminf}
\liminf_{r \to 1^{-}} \int\limits_{\sphere} |u(r\zeta)| d\sigma(\zeta) ~=~ 0.
\end{equation}

\subsubsection*{Case 1: $0<p<1$}
Since then $\alpha \leq (n-1)p-n$, we have
$u\in  \PH_{N,\alpha}^{p}(\ball) \subset  \PH_{N,(n-1)p-n}^{p}(\ball)$, and
\begin{align*}
\|u\|_{p,(n-1)p-n}^p ~\leq~ \|u\|_{p,\alpha}^p ~<~ +\infty.
\end{align*}
By Lemma \ref{lem:ptwsest}, we have
\begin{equation}
\sup_{x\in \ball} |u(x)|^p (1-|x|^{2})^{(n-1)p} ~\lesssim~  \|u\|_{p,(n-1)p-n}^{p} < +\infty.
\end{equation}
Thus,
\begin{align*}
\|u\|_{1,-1} ~=~& \int\limits_{\ball} |u(x)|^{p}(1-|x|^2)^{(n-1)p-n}
\left\{ |u(x)|^p(1-|x|^2)^{(n-1)p} \right\}^{(1-p)/p} dV(x)\\
\leq~&  \|u\|_{p,(n-1)p-n}^{p} \left\{ \sup_{x\in \ball} |u(x)|^p (1-|x|^{2})^{(n-1)p}\right\}^{(1-p)/p} ~<~ +\infty.
\end{align*}
Now we prove \eqref{eqn:liminf} by contradiction. Assume that
\begin{equation}
\liminf_{r \to 1^{-}} \int\limits_{\sphere} |u(r\zeta)| d\sigma(\zeta) ~>~ 0.
\end{equation}
Then there exists a $\delta>0$ such that
\[
\inf_{1-\delta< r<1} \int\limits_{\sphere} |u(r\zeta)| d\sigma(\zeta) ~>~ 0.
\]
It follows that
\begin{align*}
\|u\|_{1,-1} ~=~& \int\limits_{0}^{1} \frac{r^{n-1}}{1-r^2} \left\{\int\limits_{\bbS}| u(r\zeta)| \ d\sigma(\zeta)\right\} dr \notag\\
\geq~ & \left\{\int\limits_{1-\delta}^{1} \frac{r^{n-1} dr}{1-r^2} \right\}
\left\{\inf_{1-\delta< r<1} \int\limits_{\sphere} |u(r\zeta)| d\sigma(\zeta)\right\} = +\infty.
\end{align*}
A contradiction.

\subsubsection*{Case 2: $1\leq p<+\infty$}
Since $\alpha\leq -1$, we have
\[
\|u\|_{p,-1}^p ~=~ \int\limits_{\ball} |u(x)|^p (1-|x|^2)^{-1} dV(x)
~\leq~ \int\limits_{\ball} |u(x)|^p (1-|x|^2)^{\alpha} dV(x) ~<~ +\infty.
\]
By the same elementary argument as above, we deduce that
\begin{equation}
\liminf_{r \to 1^{-}} \int\limits_{\sphere} |u(r\zeta)|^p d\sigma(\zeta) ~=~ 0.
\end{equation}
and \eqref{eqn:liminf} follows from this and an application of H\"older's inequality.

Now we proceed to prove the proposition.
By the alternative Almansi representation \eqref{eqn:yaalmansi}, we see that
\begin{equation*}
u(x) = v_{0}(x) + (1-|x|^2)v_{1}(x) + \cdots + (1-|x|^2)^{N-1} v_{N-1}(x),
\end{equation*}
where $v_0,v_1,\ldots, v_{N-1}$ are harmonic functions on $\ball$.
It follows that
\begin{align*}
\int\limits_{\sphere} u(r\zeta) \frac {1-|x|^2}{|x-\zeta|^n} d\sigma(\zeta) ~=~& \sum_{j=0}^{N-1} (1-r^2)^{j} \int\limits_{\sphere}
v_j(r\zeta) \frac {1-|x|^2}{|x-\zeta|^n} d\sigma(\zeta)\\
=~& \sum_{j=0}^{N-1} (1-r^2)^{j} v_j(rx).
\end{align*}
Letting $r\to 1^{-}$, we obtain
\begin{equation}
v_{0}(x) = \lim_{r\to 1^{-}} \int\limits_{\sphere} \frac{1-|x|^2}{|x-\zeta|^n} \, u(r\zeta)\ d\sigma(\zeta)
\end{equation}
for every $x\in \ball$. It follows that
\begin{eqnarray*}
|v_{0}(x)| &=& \lim_{r\to 1^{-}} \Bigg|\int\limits_{\sphere} \frac{1-|x|^2}{|x-\zeta|^n} \, u(r\zeta)\ d\sigma(\zeta)\Bigg|\\
&\leq& \frac{1+|x|}{(1-|x|)^{n-1}} \liminf_{r\to 1^{-}} \int\limits_{\sphere} |u(r\zeta)| d\sigma(\zeta) ~=~0
\end{eqnarray*}
for all $x\in \ball$. If $N=1$, we are done. If $N\geq 2$, we obtain instead that $u(x)=(1-|x|^2)\widetilde{u}(x)$ where
$$\widetilde{u}(x) ~:=~ \frac{u(x)}{1-|x|^2}=v_{1}(x)+(1-|x|^2)v_{2}(x)+\cdots+(1-|x|^2)^{N-2}v_{N-1}(x),$$
is $(N-1)$-harmonic. Moreover, this gives $\widetilde{u}\in \PH_{N-1,\alpha+p}^{p}(\ball)$.
\end{proof}

The following is a sufficient criterion for the triviality of a polyharmonic function. We note that the restriction $p\geq \frac{n-2}{n-1}$ enters the picture here.

\begin{proposition}\label{prop:prop412inBH}
Suppose that $\frac {n-2}{n-1} \leq  p< +\infty$ and $N\in \bbN$.
Then \( \PH_{N,\alpha}^{p}(\ball) = \{0\} \) for all $\alpha \leq -1-(2N-1)p$.
\end{proposition}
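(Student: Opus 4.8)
The plan is to argue by induction on $N$, taking the harmonic case $N=1$ as the base and stripping off one Laplacian at each step; the restriction $p\geq \frac{n-2}{n-1}$ will be used exactly once, in the base case. The engine of the whole argument will be the following elementary triviality criterion, which I would record first: if $g\geq 0$ is subharmonic on $\ball$ and $\int_{\ball} g(x)(1-|x|^2)^{\beta}\,dV(x)<\infty$ for some $\beta\leq -1$, then $g\equiv 0$. Indeed, subharmonicity makes the spherical means $r\mapsto \int_{\sphere} g(r\zeta)\,d\sigma(\zeta)$ nondecreasing, so if $g\not\equiv 0$ these means stay bounded below by a positive constant as $r\to 1^{-}$, which forces the weighted integral to diverge because $\int_0^1 (1-r^2)^{\beta}\,dr=+\infty$ for $\beta\leq -1$.

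For the base case I would take $u\in \PH_{1,\alpha}^{p}(\ball)$ harmonic with $\alpha\leq -1-p$. By Lemma \ref{lem:Pav97} each partial derivative $\partial_j u$ lies in $\PH_{1,\alpha+p}^{p}(\ball)$, and hence $|\nabla u|\in L_{\alpha+p}^{p}(\ball)$ with $\alpha+p\leq -1$. Now comes the one place where $p\geq \frac{n-2}{n-1}$ is needed: since $\nabla u$ is the gradient of a harmonic function, $|\nabla u|^{p}$ is subharmonic precisely when $p\geq \frac{n-2}{n-1}$ (see \cite{SW60}). Applying the criterion above to $g=|\nabla u|^{p}$ and $\beta=\alpha+p$ yields $\nabla u\equiv 0$, so $u$ is constant; since $\alpha\leq -1$ makes every nonzero constant non-integrable against $(1-|x|^2)^{\alpha}$, we get $u=0$.

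For the inductive step, assuming the statement for $N-1$, I would take $u\in \PH_{N,\alpha}^{p}(\ball)$ with $\alpha\leq -1-(2N-1)p$ and apply Corollary \ref{cor:Pav97-2} with $k=N-1$ to obtain $\Delta^{N-1}u\in \PH_{1,\alpha+2(N-1)p}^{p}(\ball)$. A one-line check shows the new weight satisfies $\alpha+2(N-1)p\leq -1-p$, so the base case forces $\Delta^{N-1}u=0$; thus $u$ is actually $(N-1)$-harmonic. Since $\alpha\leq -1-(2N-1)p\leq -1-(2(N-1)-1)p$, the function $u$ lies in $\PH_{N-1,\alpha}^{p}(\ball)$ within the range covered by the induction hypothesis, and therefore $u=0$.

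The hard part is entirely concentrated in the base case, and in particular in the fact that for $0<p<1$ one cannot work with $|u|^{p}$ directly, since $|u|^{p}$ need not be subharmonic for harmonic $u$ in that range; triviality cannot be read off from $u\in L_{\alpha}^{p}(\ball)$ with $\alpha\leq -1$ alone. The resolution is to pass to the gradient and invoke the Stein--Weiss theory, which restores subharmonicity of $|\nabla u|^{p}$ down to the sharp exponent $\frac{n-2}{n-1}$; this is precisely why the standing restriction on $p$ cannot be dropped in this proposition.
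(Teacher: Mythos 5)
Your proof is correct and follows essentially the same route as the paper: Pavlovi\'c's lemma to pass to the gradient, the Stein--Weiss subharmonicity of $|\nabla u|^p$ for $p\geq \frac{n-2}{n-1}$ combined with the divergence of $\int_0^1(1-r)^{-1}\,dr$ to kill the harmonic base case, and induction via the Laplacian. The only (harmless) difference is in the inductive step, where you apply $\Delta^{N-1}$ in one stroke and invoke the base case on $\Delta^{N-1}u$ before using the hypothesis on $u$, whereas the paper applies a single $\Delta$ and uses the hypothesis on $\Delta u$; the weight bookkeeping works out identically either way.
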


\begin{proof}
Since the spaces $\PH_{N,\alpha}^{p}(\ball)$ grow with $\alpha$,
we only need to prove the result when $\alpha=-1-(2N-1)p$.

The proof is by induction on $N$.
We first prove the claim for $N=1$:
\[
\mathrm{PH}_{1,-1-p}^{p}({\mathbb B}^n)=\{0\}.
\]
Let $u\in\mathrm{PH}_{1,-1-p}^{p}({\mathbb B}^n)$ be arbitrary. Then by Lemma \ref{lem:Pav97}, $\partial_j u\in \PH_{1,-1}^{p}(\ball)$,
$j=1,\ldots, n$.
Note that
\begin{equation}\label{eqn:normest}
\|\nabla u\|_{p,-1}^{p} ~=~ \omega_{n-1} \int\limits_{0}^{1} \frac{r^{n-1}}{1-r^2}
\bigg\{\int\limits_{\bbS}|\nabla u(r\zeta)|^{p}\ d\sigma(\zeta)\bigg\} dr,
\end{equation}
where, as usual,
\[
|\nabla u| := \bigg(\sum_{j=1}^n \left| \frac {\partial u}{\partial x_j}\right|^2 \bigg)^{1/2} \quad
\text{and} \quad \|\nabla u\|_{p,\alpha} := \big\| |\nabla u| \big\|_{p,\alpha}.
\]
Since $u$ is harmonic in $\ball$, $|\nabla u|^p$ is subharmonic when $p\geq \frac{n-2}{n-1}$, by \cite[Theorem A]{SW60}. Hence the function
\[
t ~\longmapsto~ \int\limits_{\bbS}|\nabla u(t\zeta)|^{p}\ d\sigma(\zeta)
\]
is increasing. It then follows from \eqref{eqn:normest} that
\[
\|\nabla u\|_{p,-1}^{p} ~\geq~ \bigg\{\int\limits_{t}^{1} \frac{r^{n-1}}{1-r^2} dr\bigg\}
\bigg\{\int\limits_{\bbS} |\nabla u(t\zeta)|^{p}\ d\sigma(\zeta)\bigg\}
\]
for every $0<t<1$.
Thus, $\|\nabla u\|_{p,-1}^{p}< + \infty$ forces
$\nabla u=0$, and hence $u$ must be constant.
As the only constant function in $\PH_{1,-1-p}^{p}(\ball)$ is the zero function, we obtain $u=0$.

For the induction step, we assume that the above assertion holds for
$N = N_0$:
\[
\mathrm{PH}_{N_0,-1-(2N_0-1)p}^{p}(\ball)=\{0\}.
\]
Let $u\in\mathrm{PH}_{N_0+1,-1-(2N_0+1)p}^{p}(\ball)$ be arbitrary. Put $v:= \Delta u$.
By Corollary \ref{cor:Pav97-2}, $v \in \mathrm{PH}_{N_0,-1-(2N_0-1)p}^{p}(\ball)$.
Then $v=0$, by the induction hypothesis. This means that $u$ is harmonic and
furthermore $u\in\mathrm{PH}_{1,-1-(2N_0+1)p}^{p}(\ball)$. But
$\mathrm{PH}_{1,-1-(2N_0+1)p}^{p}(\ball) \subset \mathrm{PH}_{1,-1-p}^{p}(\ball) =\{0\}$,
we find that $u=0$.
Consequently,
\[
\mathrm{PH}_{N_0+1,-1-(2N_0+1)p}^{p}(\ball) =\{0\}.
\]
The proof is complete.
\end{proof}


\section{Proof of Theorem \ref{thm:main2}: Part 2}

In this section, we shall prove Theorem \ref{thm:main2refm2}, which together with Theorem
\ref{thm:main2refml} will complete the proof of Theorem \ref{thm:main2}.

For fixed $N\in \mathbb{N}$ and $j\in \{1,\cdots,N\}$, we define
\begin{equation}
a_{j,N}(p) ~:=~ \min\{b_{j,N}(p),-1-(N-j)p\},
\end{equation}
where
\begin{equation*}
b_{j,N}(p) ~:=~ \max\{-1-(N+j-1)p,-n-(N-j-n+1)p\},
\end{equation*}
as defined in \eqref{eqn:bjNp}.
Note that $a_{j,N}(p)=b_{j,N}(p)$ for $0<p<1$ and
\begin{equation}
\min_{j:1\leq j \leq N} a_{j,N}(p) = \min_{j:0\leq j \leq N} b_{j,N}(p).
\end{equation}

Thus, we can reformulate Theorem \ref{thm:main2refm2} as follows.

\begin{thmbis}{thm:main2refm2} \label{thm:main2refm3}
Suppose that $\frac{n-2}{n-1}\leq  p < \infty$ and $N\in \bbN$. Then
\[
\alpha \leq \min_{j:1\leq j \leq N} a_{j,N}(p) \quad \Longrightarrow \quad \PH_{N,\alpha}^{p}(\ball)=\{0\}.
\]
\end{thmbis}

According to Theorem \ref{thm:cellular}, any $u\in\mathrm{PH}_{N,\alpha}^{p}(\ball)$ can be uniquely written as
\begin{equation*}
u=w_{0}+ \bfM [w_{1}]+ \cdots +\bfM^{N-1}[w_{N-1}],
\end{equation*}
where each term $\bfM^{j}[w_{j}]$ remains in the space $\mathrm{PH}_{N,\alpha}^{p}(\ball)$, with $w_{j}$ solving
$\bfL_{N-j-1}[w_{j}]=0$ on $\ball$. Therefore, to show $u=0$, we just need to test each term
$\bfM^{j}[w_{j}]$ separately. Thus the proof of Theorem \ref{thm:main2refm3} reduces to proving the following proposition.

\begin{proposition}\label{lem:prop71inBH} 
Suppose that $\frac{n-2}{n-1} \leq p <+\infty$, $N\in \bbN$, $j\in \{0,1,\ldots,N-1\}$.
If $\alpha\leq a_{N-j,N}(p)$ and $u\in \PH_{N,\alpha}^{p}(\ball)$ is of the form $u=\bfM^{j}[w]$, with $w$ satisfying
$\bfL_{N-j-1}[w]=0$, then $u=0$.
\end{proposition}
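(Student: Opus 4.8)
The plan is to reduce everything to a statement purely about $w$ and then play the two triviality criteria, Propositions \ref{prop:prop411inBH} and \ref{prop:prop412inBH}, against each other according to the size of $p$. I would write $M:=N-j$ and $\beta:=\alpha+jp$. Since $w$ solves $\bfL_{M-1}[w]=0$, Corollary \ref{cor:LthetaandNharmonic} shows $w$ is $M$-harmonic, and because $\|u\|_{p,\alpha}^{p}=\|w\|_{p,\beta}^{p}$ we have $w\in\PH_{M,\beta}^{p}(\ball)$; moreover $u=0$ is equivalent to $w=0$. A direct computation rewrites the hypothesis $\alpha\le a_{N-j,N}(p)$ as
\[
\beta \le \min\big\{\max\{-1-(2M-1)p,\,(n-1)p-n\},\,-1\big\}.
\]
The two numbers inside the max are exactly the critical exponents in $\PH_{M,\beta}^p(\ball)$ of the two distinguished solutions $P_{M-1}(\cdot,e_1)$ and $\Phi_{M-1}$ of $\bfL_{M-1}[\cdot]=0$ that sit inside this cell, while $-1$ is the threshold coming from $\Phi_{M-1}$. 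Thus it suffices to show that every $M$-harmonic $w\in\PH_{M,\beta}^{p}(\ball)$ with $\bfL_{M-1}[w]=0$ and $\beta$ as above vanishes.

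Set $A:=-1-(2M-1)p$ and $B:=(n-1)p-n$, and note $A\le -1$ always. I would split on the sign of $A-B$, equivalently on whether $p\le\frac{n-1}{2M+n-2}$. In the first case $\max\{A,B\}=A\le-1$, so the hypothesis gives $\beta\le -1-(2M-1)p$, and Proposition \ref{prop:prop412inBH} (with $N$ replaced by $M$) immediately yields $w=0$; here the equation $\bfL_{M-1}[w]=0$ is not even used. In the second case $\max\{A,B\}=B$, so $\beta\le\min\{(n-1)p-n,-1\}$, which is precisely the hypothesis of Proposition \ref{prop:prop411inBH}.

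Applying Proposition \ref{prop:prop411inBH} to $w$: if $M=1$ we are done, while if $M\ge2$ we obtain $w=\bfM[\tilde w]$ with $\tilde w\in\PH_{M-1,\beta+p}^{p}(\ball)$ an $(M-1)$-harmonic function. I would then feed the equation back in: by \eqref{eqn:commu1} with $\theta=M-1$,
\[
0=\bfL_{M-1}[w]=\bfL_{M-1}\bfM[\tilde w]=\bfM\,\bfL_{M-2}[\tilde w]-8(M-1)\tilde w,
\]
so $\bfM\,\bfL_{M-2}[\tilde w]=8(M-1)\tilde w$. This single algebraic identity already forces $\tilde w=0$: decompose $\tilde w=\sum_{i=0}^{M-2}\bfM^{i}[\tilde w_i]$ by Theorem \ref{thm:main1}, with $\bfL_{M-2-i}[\tilde w_i]=0$; then \eqref{eqn:commu2} gives $\bfM\,\bfL_{M-2}[\tilde w]=\sum_{i=1}^{M-2}4i(i+3-2M)\,\bfM^{i}[\tilde w_i]$, and comparing with $8(M-1)\sum_{i}\bfM^{i}[\tilde w_i]$ the uniqueness in Theorem \ref{thm:main1} yields $4(i+1)(i+2-2M)\,\tilde w_i=0$ for every $i$. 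Since $i+1\ge1$ and $i+2-2M<0$ on the range $0\le i\le M-2$, each $\tilde w_i=0$, hence $\tilde w=0$ and $w=0$.

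The main obstacle is the second case. The naive idea of iterating the descent of Proposition \ref{prop:prop411inBH} down to $M=1$ fails, because each descent raises the weight exponent by $p$ and one quickly leaves the admissible range $\beta\le\min\{(n-1)p-n,-1\}$. What makes the argument close is that a \emph{single} descent, combined with the equation $\bfL_{M-1}[w]=0$ rewritten through the correspondence principle \eqref{eqn:commu1}, produces an identity that is annihilated purely algebraically by the uniqueness of the cellular decomposition; consequently no second descent, and no induction on $M$, is needed.
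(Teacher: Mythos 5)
Your proof is correct and follows essentially the same route as the paper: the same reduction to $M=N-j$ and $\beta=\alpha+jp$, the same case split resolved by Proposition \ref{prop:prop412inBH} when $\beta\leq -1-(2M-1)p$ and by a single descent via Proposition \ref{prop:prop411inBH} otherwise, and the same final appeal to the uniqueness part of Theorem \ref{thm:main1}. The only difference is in the endgame and is cosmetic: where you push $\bfL_{M-1}[w]=0$ through the correspondence principle \eqref{eqn:commu1} to get $\bfM\,\bfL_{M-2}[\widetilde{w}]=8(M-1)\widetilde{w}$ and then compare coefficients (correctly, with nonvanishing factors $4(i+1)(i+2-2M)$), the paper simply notes that $-w$ is itself a legitimate $j=0$ cell term while $\bfM[\widetilde{w}]$ expands into cells $j\geq 1$ only, so applying uniqueness to $0=-w+\sum_{j\geq 1}\bfM^{j}[\widetilde{v}_{j}]$ kills $w$ in one stroke.
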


\begin{proof}
It is clear that $w\in \PH_{N-j,\alpha+jp}^{p}(\ball)$. The assumption $\alpha\leq a_{N-j,N}(p)$
can be written as
\[
\alpha+jp ~\leq~ a_{N-j,N}(p)+jp ~=~ a_{N-j,N-j}(p).
\]
Let $N^{\prime}:=N-j$ and $\alpha^{\prime}:=\alpha+jp$. We are reduced to proving the following

\begin{claim}
Assume that $\alpha^{\prime}\leq a_{N^{\prime},N^{\prime}}(p)$.
If $w\in \PH_{N^{\prime},\alpha^{\prime}}^{p}(\ball)$
solves $\bfL_{N^{\prime}-1}[w]=0$, then $w=0$.
\end{claim}

First note that, in the case when $\frac{n-2}{n-1} \leq p <  \frac{n-1}{n+2N^{\prime}-2}$ (this is only possible if
$N^{\prime}=1$),
\[
a_{N^{\prime},N^{\prime}}(p) = -1-(2N^{\prime}-1)p.
\]
The assertion $w=0$ is then immediate from Proposition \ref{prop:prop412inBH}.

Now we assume that $p > \frac{n-1}{n+2N^{\prime}-2}$. Then
\[
a_{N^{\prime},N^{\prime}}(p)=\min\{(n-1)p-n,-1\}.
\]
Since $\alpha^{\prime}\leq a_{N^{\prime},N^{\prime}}(p)$, by Proposition
\ref{prop:prop411inBH}, $w$ can be written as $w=\bfM[\widetilde{w}]$, with $\widetilde{w}\in\PH_{N^{\prime}-1,\alpha^{\prime}+p}^{p}(\ball)$.
If $N^{\prime}=1$, this should be understood as $\widetilde{w}=0$ and we are done. If $N^{\prime}\geq 2$,
by Theorem \ref{thm:cellular}, $\widetilde{w}$
has a unique decomposition
\[
\widetilde{w} ~=~ \sum_{j=0}^{N^{\prime}-2} \bfM^{j}[v_{j}],
\]
where each term $\bfM^{j}[v_{j}] \in \PH_{N^{\prime}-1, \alpha^{\prime}+p}^{p}(\ball)$ with $v_j$ satisfying
$\bfL_{N^{\prime}-j-2}[v_{j}]=0$. This means that $w=\bfM[\widetilde{w}]$ has the expansion
\begin{equation}\label{eqn:eq7.1inBH}
w ~=~  \sum_{j=1}^{N^{\prime}-1} \bfM^{j}[v_{j-1}]
~=~ \sum_{j=1}^{N^{\prime}-1} \bfM^{j}[{\widetilde{v}}_{j}]
\end{equation}
where each term $\bfM^{j}[{\widetilde{v}}_{j}]$ is in $\PH_{N^{\prime},\alpha^{\prime}}^{p}(\ball)$, with $\widetilde{v}_{j}:=v_{j-1}$ satisfying
$\bfL_{N^{\prime}-j-1}[\widetilde{v}_{j}]=0$.
Rewrite \eqref{eqn:eq7.1inBH} as
\[
0 ~=~ -w + \bfM[\widetilde{v}_{1}] + \cdots + \bfM^{N-1}[\widetilde{v}_{N-1}].
\]
From the uniqueness of the decomposition in Theorem \ref{thm:cellular}, we see that
this is only possible if $w=0$. This proves the claim,
and the proof of Theorem \ref{thm:main2} is complete.
\end{proof}

\section{Proof of Theorem \ref{thm:main3}}

Again, we analyze each term in the cellular decomposition separately. We begin with
the following proposition.

\begin{proposition} \label{prop:Prop7.2inBH}
Suppose that $0<p<\infty$, $N\in \bbN$ and $j\in \{0,1,\ldots,N-1\}$. If
$\alpha >a_{N-j,N}(p)$ then there exists a nontrivial $u\in \PH_{N,\alpha}^{p}(\ball)$ of the form
$u=\bfM^{j}[w]$, with $w$ satisfying $\bfL_{N-j-1}[w]=0$.
\end{proposition}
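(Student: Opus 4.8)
The strategy is to exhibit two explicit families of nontrivial functions of the prescribed form $u=\bfM^{j}[w]$ with $\bfL_{N-j-1}[w]=0$, and to show that for any given $\alpha>a_{N-j,N}(p)$ at least one of them lies in $\PH_{N,\alpha}^{p}(\ball)$. The key structural observation is that the threshold splits as a minimum of two numbers, namely
\[
a_{N-j,N}(p)=\min\bigl\{\,b_{N-j,N}(p),\,-1-jp\,\bigr\},
\]
and each of the two summands in this minimum will be matched to one of the two candidate functions. Thus the proof is a bookkeeping matter: pair up the correct test function with the correct branch of the minimum.

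\textbf{The two candidates.} For the branch $-1-jp$, I would take $w=\Phi_{N-j-1}$. By Lemma~\ref{lem:Phi_theta} this solves $\bfL_{N-j-1}[w]=0$, so by Corollary~\ref{cor:LthetaandNharmonic} the function $\bfM^{j}[\Phi_{N-j-1}]$ is $N$-harmonic; moreover Corollary~\ref{cor:PhiinPH} shows it lies in $\PH_{N,\alpha}^{p}(\ball)$ precisely for $\alpha>-1-jp$, and it is nontrivial since $\Phi_{N-j-1}(0)=1$. For the branch $b_{N-j,N}(p)$, I would take $w=P_{N-j-1}(\cdot,e_{1})$, which solves $\bfL_{N-j-1}[w]=0$ by Lemma~\ref{lem:LPoisson}. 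Here the relevant function is exactly the test function from Section~4: recalling $U_{m,N}=\bfM^{N-m}[P_{m-1}(\cdot,e_{1})]$ and setting $m=N-j$ gives
\[
\bfM^{j}\bigl[P_{N-j-1}(\cdot,e_{1})\bigr]=U_{N-j,N},
\]
which is $N$-harmonic, manifestly nonzero, and (by Lemma~\ref{lem:testtriviality}) lies in $\PH_{N,\alpha}^{p}(\ball)$ precisely for $\alpha>b_{N-j,N}(p)$.

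\textbf{Matching to the threshold.} With these two candidates in hand, I would argue as follows. If $-1-jp\leq b_{N-j,N}(p)$, then $a_{N-j,N}(p)=-1-jp$, and for every $\alpha>a_{N-j,N}(p)$ the function $\bfM^{j}[\Phi_{N-j-1}]$ furnishes the required nontrivial element. Otherwise $a_{N-j,N}(p)=b_{N-j,N}(p)$, and then $U_{N-j,N}$ does the job. Since both candidates are already verified to be of the form $\bfM^{j}[w]$ with $\bfL_{N-j-1}[w]=0$, and since the admissibility range of each matches exactly the corresponding branch of the minimum, the choice dictated by the sign of $b_{N-j,N}(p)-(-1-jp)$ always produces a nontrivial $u\in\PH_{N,\alpha}^{p}(\ball)$, as claimed. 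I do not expect any genuine obstacle here: the substantive content (that $\Phi_\theta$ and $P_\theta$ solve $\bfL_\theta[u]=0$, and the exact integrability thresholds) has already been established in Sections~2 and~4, so the only point requiring care is the clean identification of $a_{N-j,N}(p)$ as the minimum of the two candidate thresholds and confirming nontriviality of each test function.
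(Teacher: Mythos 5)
Your proposal is correct and takes essentially the same approach as the paper: the same two test functions $\bfM^{j}[\Phi_{N-j-1}]$ and $U_{N-j,N}=\bfM^{j}[P_{N-j-1}(\cdot,e_{1})]$, justified by the same lemmas (Corollary \ref{cor:PhiinPH} and Lemma \ref{lem:testtriviality}). The only cosmetic difference is that the paper organizes the case split as $0<p<1$ versus $1\leq p<\infty$, which coincides exactly with your split according to which branch of the minimum defining $a_{N-j,N}(p)$ is active.
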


\begin{proof}
When $0<p<1$, we consider the function $u=\bfM^{j}[P_{N-j-1}(\cdot,e_1)]$, where $P_{\theta}$ is
given by \eqref{eqn:poisson}. Explicitly,
\[
u(x) ~=~ U_{N-j,N}(x) ~=~ \frac {(1-|x|^2)^{2N-j-1}}{|x-e_1|^{n+2(N-j-1)}}, \qquad x\in \ball.
\]
By Lemma \ref{lem:testtriviality}, $u$ is in $\PH_{N,\alpha}^{p}(\ball)$ if and only
if $\alpha>b_{N-j,N}(p)$ . Note that $a_{N-j,N}(p)=b_{N-j,N}(p)$ for
$0<p<1$. Hence there exists a nontrivial $u\in \PH_{N,\alpha}^{p}(\ball)$ of the form
$u=\bfM^{j}[w]$, with $w$ satisfying $\bfL_{N-j-1}[w]=0$, provided $\alpha >a_{N-j,N}(p)$.

When $1\leq p<\infty$, we can consider the function $u=\bfM^{j}[\Phi_{N-j-1}]$, where $\Phi_{\theta}$ is defined
by \eqref{eqn:Phi_theta}.
By Corollary \ref{cor:PhiinPH}, $\bfM^{j}[\Phi_{N-j-1}]$ is
in $\PH_{N,\alpha}^{p}(\ball)$ for any $\alpha >-1-jp$. In view of that $a_{N-j,N}(p) = -1-jp$,
this completes the proof.
\end{proof}

\begin{proof}[Proof of Theorem \ref{thm:main3}]
It is a matter of checking which terms actually occur
in the decomposition of Theorem \ref{thm:cellular}.
This is easy to do using Propositions \ref{lem:prop71inBH} and \ref{prop:Prop7.2inBH}.
\end{proof}

\section{Concluding remarks}

We conclude this paper with several remarks and problems which naturally arise from our results.

\begin{problem}
Find an explicit formula for the critical integrability type $\beta(N,p)$ in the range $0<p<\frac {n-2}{n-1}$.
\end{problem}

It is natural to expect that the formula \eqref{eqn:crit-type} in Theorem \ref{thm:main2} is still valid for the
range of $0<p<\frac {n-2}{n-1}$. Nevertheless, it turns out that this is not true even in the simplest case $N=1$.
According to Aleksandrov \cite[p. 526, Remark]{Ale96}, if $n\geq 3$ and $0<p<\frac{n-2}{n}$,
then there exists an $\varepsilon_0 = \varepsilon_0(p)>0$ and a nonzero harmonic function $v$ such that
\[
M_{p}(v,r)= o\left( (1-r)^{1+ 2\varepsilon/p}\right)\quad (\text{as } r\to1)
\]
for $0<\varepsilon<\varepsilon_0$, where
\[
M_{p}(v,r) := \Bigg\{ \int\limits_{\sphere} |v(r\zeta)|^p d\sigma(\zeta) \Bigg\}^{1/p}.
\]
It follows that
\begin{align*}
\left\|\bfM^{N-1}[v]\right\|_{p,-1-Np-\varepsilon}^{p}
~=~& \omega_{n-1} \int_{0}^{1} M_{p}^{p}(v,r) (1-r^2)^{-1-p-\varepsilon} r^{n-1} dr\\
\lesssim~& \int_{0}^{1} (1-r^2)^{-1+\varepsilon} dr ~<~ +\infty,
\end{align*}
which means that
\begin{equation}\label{eqn:MN-1v}
\bfM^{N-1}[v] ~\in~ \PH_{N,-1-Np-\varepsilon}^p (\ball) \quad \text{for all }\, \varepsilon \in (0,\varepsilon_0).
\end{equation}
In particular, when $N=1$, this implies that
\[
\beta(1,p) < -1-p-\varepsilon
\]
for sufficiently small $\varepsilon$.
On the other hand, it is easy to check that
\begin{equation*}
\min\{ b_{0,1}(p), b_{1,1}(p)\} = -1-p \quad \text{for }\ 0<p<\tfrac {n-1}{n}.
\end{equation*}
We then see that
\begin{equation*}
\beta(1,p) ~< ~  \min\{ b_{0,1}(p), b_{1,1}(p)\}
\end{equation*}
for $0<p<\frac {n-2}{n}$. We have not been able to solve this problem, and it could be very difficult.

Borichev and Hedenmalm \cite{BH14} also found an interesting entanglement phenomenon
in the decomposition \eqref{eqn:celluardecomp}.

\begin{definition}
The entangled region $\mathcal{E}_N$ is defined to be the set of $(p,\alpha)$ such that the space $\PH_{N,\alpha}^p(\ball)$ contains
no nontrivial functions of the form $\bfM^{N-1}[v]$ with $v$ harmonic. The complement $\mathcal{N}_N := \mathcal{A}_N \setminus \mathcal{E}_N$
is referred as to the unentangled region.
\end{definition}

Note that we have reformulated the definition of the entangled region $\mathcal{E}_N$ in \cite[Section 3.3]{BH14}, for ease of exposition.
It was shown in \cite[Proposition 3.6]{BH14} that, when $n=2$,
\begin{equation}\label{eqn:E_N2}
\mathcal{E}_N ~=~ \left\{  (p,\alpha)\in \mathcal{A}_N: 0<p< \tfrac {1}{3} \text{ and } \alpha\leq -1-Np \right\}.
\end{equation}

\begin{problem}
Describe the entangled region $\mathcal{E}_N$ when $n\geq 3$.
\end{problem}

When $n\geq 3$, in view of \eqref{eqn:E_N2}, one may conjecture that
\[
\mathcal{E}_N ~=~ \left\{  (p,\alpha)\in \mathcal{A}_N: 0<p< \tfrac{n-1}{n+1} \text{ and } \alpha\leq -1-Np \right\}.
\]
However, by \eqref{eqn:MN-1v}, we see that, for each $0<p<\frac {n-2}{n}$ there exists an $\varepsilon_0 = \varepsilon_0(p)>0$
such that the space $\PH_{N,\alpha}^p(\ball)$ contains a nontrivial functions of the form $\bfM^{N-1}[v]$ with $v$ harmonic,
whenever $\alpha > -1-Np-\varepsilon(p)$. This means that $\mathcal{E}_N$ excludes the region
\[
\left\{  (p,\alpha)\in \mathcal{A}_N: 0<p< \tfrac{n-2}{n} \text{ and } -1-Np-\varepsilon_0(p)
< \alpha <  -1-Np \right\}.
\]
It seems to us that the situation in the higher dimensional case $n\geq 3$ is rather complicated.

We put
\[
\mathcal{N}_N^{(1)} ~:=~ \left\{ (p,\alpha)\in \mathcal{N}_N: u \in \PH_{N,\alpha}^p(\ball) \; \Longrightarrow \; u= \bfM^{N-1}[v]
\text{ for some harmonic } v \right\}
\]
and refer to it as the principal unentangled cell.
Next result is a higher-dimensional extension of \cite[Proposition 3.7]{BH14}.

\begin{proposition} \label{prop:main4}
Let $p \geq \frac {n-2}{n-1}$ and $N\in \bbN$ be fixed.
Every $u\in \PH_{N,\alpha}^p(\ball)$ has the form $u=\bfM^{N-1}[v]$ with $v$ harmonic on $\ball$ if and only if
\[
\alpha ~\leq~ \min\{-n-(N-n-1)p,\, -1-(N-2)p\}.
\]
In other words,
\[
\textstyle{\mathcal{N}_N^{(1)}  \bigcap \widetilde{\mathcal{A}}_N}  = \left\{ (p,\alpha): p\geq \tfrac {n-2}{n-1} \text{ and }
\alpha ~\leq~ \min\{-n-(N-n-1)p,\, -1-(N-2)p\} \right\}.
\]

\end{proposition}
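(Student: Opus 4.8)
The plan is to combine the cellular decomposition of Theorem~\ref{thm:main1} with the two sharpness statements, Propositions~\ref{lem:prop71inBH} and~\ref{prop:Prop7.2inBH}, thereby reducing everything to an elementary optimization over the index. First I would write $u\in\PH_{N,\alpha}^{p}(\ball)$ in its canonical form $u=\sum_{j=0}^{N-1}\bfM^{j}[w_{j}]$ with $\bfL_{N-j-1}[w_{j}]=0$, and observe that, by the uniqueness in Theorem~\ref{thm:main1}, the requirement $u=\bfM^{N-1}[v]$ with $v$ harmonic is \emph{exactly} the statement $w_{0}=\cdots=w_{N-2}=0$; here one uses that $\bfL_{0}[w_{N-1}]=\bfM\Delta w_{N-1}=0$ forces $w_{N-1}$ to be harmonic on $\ball$, so that $v=w_{N-1}$ is automatically the harmonic factor. (The assumption $N\geq 2$, under which the regions are defined, is implicit; for $N=1$ the claim is vacuous.)

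For the backward implication I would assume $\alpha\leq\min\{-n-(N-n-1)p,\,-1-(N-2)p\}$ and kill each term $\bfM^{j}[w_{j}]$ with $j\in\{0,\ldots,N-2\}$: setting $i=N-j\in\{2,\ldots,N\}$, Proposition~\ref{lem:prop71inBH} forces $\bfM^{j}[w_{j}]=0$ as soon as $\alpha\leq a_{i,N}(p)$. For the forward implication I would argue contrapositively: if $\alpha>a_{i_{0},N}(p)$ for some $i_{0}\in\{2,\ldots,N\}$, then Proposition~\ref{prop:Prop7.2inBH} (applied with $j=N-i_{0}\neq N-1$) produces a nontrivial $u=\bfM^{N-i_{0}}[w]\in\PH_{N,\alpha}^{p}(\ball)$, which by uniqueness is not of the form $\bfM^{N-1}[v]$. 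Both directions therefore rest on the identity
\begin{equation*}
\min_{i:2\leq i\leq N} a_{i,N}(p) ~=~ \min\{-n-(N-n-1)p,\,-1-(N-2)p\}.
\end{equation*}

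The main obstacle is proving this identity, and this is precisely where the hypothesis $p\geq\frac{n-2}{n-1}$ is consumed. I would record the three linear-in-$i$ quantities $P_{i}:=-1-(N+i-1)p$, $Q_{i}:=-n-(N-i-n+1)p$ and $R_{i}:=-1-(N-i)p$, so that $a_{i,N}(p)=\min\{\max\{P_{i},Q_{i}\},R_{i}\}$, with $P_{i}$ decreasing and $Q_{i},R_{i}$ increasing in $i$. Two short computations drive the proof: $R_{i}-P_{i}=(2i-1)p>0$, so $R_{i}$ never enters the inner maximum, and $R_{i}-Q_{i}=(n-1)(1-p)$, whose sign is governed by $p$ versus $1$. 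When $p\geq 1$ this gives $a_{i,N}(p)=R_{i}$, increasing in $i$, so the minimum is $R_{2}=-1-(N-2)p$, which equals the claimed value since $R_{2}\leq Q_{2}$ there. When $\frac{n-2}{n-1}\leq p<1$ it gives $a_{i,N}(p)=\max\{P_{i},Q_{i}\}\geq Q_{i}\geq Q_{2}$; the crucial point is that $P_{2}\leq Q_{2}$ holds precisely when $p\geq\frac{n-1}{n+2}$, and $\frac{n-2}{n-1}\geq\frac{n-1}{n+2}$ reduces to $2n\geq 5$, i.e.\ holds for $n\geq 3$. Hence the minimum is attained at $i=2$ and equals $Q_{2}=-n-(N-n-1)p$. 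This is exactly why the subharmonicity threshold $\frac{n-2}{n-1}$ suffices when $n\geq 3$; without it (e.g.\ for small $p$ in dimension $n=2$) the minimum of $\max\{P_{i},Q_{i}\}$ could be attained at an interior index near the crossover of $P_{i}$ and $Q_{i}$, and the clean formula would break.

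Finally, I would translate the equivalence into the set-theoretic statement. Since $(p,\alpha)\in\widetilde{\mathcal{A}}_{N}$ guarantees $\PH_{N,\alpha}^{p}(\ball)\neq\{0\}$, the established iff shows that such a point lies in $\mathcal{N}_{N}^{(1)}$ exactly when $\alpha\leq\min\{-n-(N-n-1)p,\,-1-(N-2)p\}$ (and then every nontrivial $u$ is genuinely of the form $\bfM^{N-1}[v]$ with $v\neq 0$ harmonic, so the point indeed lies in $\mathcal{N}_{N}$). Intersecting with $\widetilde{\mathcal{A}}_{N}$ yields the displayed description of $\mathcal{N}_{N}^{(1)}\cap\widetilde{\mathcal{A}}_{N}$, the admissibility constraint $\alpha>\min_{j}b_{j,N}(p)$ being understood as part of membership in $\widetilde{\mathcal{A}}_{N}$.
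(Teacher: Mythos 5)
Your strategy is exactly the paper's: the authors prove this proposition in two sentences by invoking the cellular decomposition of Theorem \ref{thm:main1} together with Propositions \ref{lem:prop71inBH} and \ref{prop:Prop7.2inBH}, leaving the index optimization implicit. You supply precisely the missing computation, and your reduction is correct: by uniqueness, $u=\bfM^{N-1}[v]$ with $v$ harmonic is equivalent to $w_0=\cdots=w_{N-2}=0$ (noting $\bfL_0=\bfM\Delta$), and the two propositions convert this into the condition $\alpha\leq\min_{2\leq i\leq N}a_{i,N}(p)$. Your case analysis of $\min\{\max\{P_i,Q_i\},R_i\}$ is also right: $R_i-P_i=(2i-1)p>0$, $R_i-Q_i=(n-1)(1-p)$, and $P_2\leq Q_2$ exactly when $p\geq\frac{n-1}{n+2}$, which is implied by $p\geq\frac{n-2}{n-1}$ precisely for $n\geq 3$. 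So for $n\geq 3$ (and for $p\geq 1$ in any dimension) your argument is complete and matches the intended proof.

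The one loose end is the case $n=2$, which the statement includes via the paper's convention that $p\geq\frac{n-2}{n-1}$ means $0<p<\infty$. For $0<p<\frac{n-1}{n+2}=\frac13$ your identity $\min_{2\leq i\leq N}a_{i,N}(p)=\min\{Q_2,R_2\}$ genuinely fails (e.g.\ $n=2$, $N=2$, $p<\frac14$ gives $a_{2,2}(p)=P_2=-1-3p>Q_2=-2+p$), and you leave this case open with the remark that the clean formula ``would break.'' It does not actually invalidate the proposition, but closing it requires one more observation: in that range one checks that $\beta(N,p)=\min_{1\leq i\leq N}a_{i,N}(p)=\min_{2\leq i\leq N}a_{i,N}(p)\geq Q_2$, so both the set $\{\beta(N,p)<\alpha\leq\min_{2\leq i\leq N}a_{i,N}(p)\}$ and the set $\{\beta(N,p)<\alpha\leq\min\{Q_2,R_2\}\}$ are empty, and the asserted equality of sets holds vacuously once both sides are intersected with the admissible region $\widetilde{\mathcal{A}}_N$ (as the statement intends; read literally, without that intersection on the right-hand side, the displayed set identity is not correct even for $n\geq3$, since the right-hand side contains points with $\alpha\leq\beta(N,p)$). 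Adding that one paragraph would make your proof complete for all the cases the proposition covers.
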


\begin{proof}[Proof of Proposition \ref{prop:main4}]
In terms of the decomposition in Theorem \ref{thm:cellular}, it is
a matter of deciding for which $(p,\alpha)$ the functions $w_j$, with $j = 0, \ldots, N-2$, must all
equal $0$. This can be done by using Propositions \ref{lem:prop71inBH} and \ref{prop:Prop7.2inBH}.
\end{proof}

\bibliographystyle{amsplain}

\end{document}